\numberwithin{equation}{section}
\newtheorem{theorem}{Theorem}[section]
\newtheorem*{theorem*}{Theorem}
\newtheorem{proposition}[theorem]{Proposition}
\newtheorem*{proposition*}{Proposition}
\newtheorem{lemma}[theorem]{Lemma}
\theoremstyle{definition}
\newtheorem{definition}[theorem]{Definition}
\newtheorem{remark}[theorem]{Remark}
\numberwithin{equation}{section}
\begin{document}

\title{Leafwise positive scalar curvature and the Rosenberg index}

\author{Guangxiang Su and Zelin Yi}

\date{}

\maketitle

\abstract{Let $M$ be a closed spin manifold, in this paper, we show that if there is a foliation $(M,F)$ and a Riemannian metric on $M$ that has leafwise positive scalar curvature then the Rosenberg index of $M$ is zero.}

\section{Introduction}
Let $M$ be a closed even dimensional spin Riemannian manifold, $E\to M$ a unitary flat vector bundle, the famous Lichnerowicz formula reads
\[
D^{E,2} = -\Delta^E+\frac{k}{4},
\] 
where $k$ is the scalar curvature of $M$. As a consequence, if $M$ has positive scalar curvature then the twisted Dirac operator is invertible and therefore has zero Fredholm index. The Atiyah-Singer index theorem then implies the following theorem due to Lichnerowicz\cite{Lich63}.
\begin{theorem}\label{thm-classical-Lich}
	If $M$ admits positive scalar curvature, then	
	$
	\langle \widehat{A}(M) \operatorname{ch}(E), [M] \rangle = 0
	$
	for all unitary flat vector bundles $E$.
\end{theorem}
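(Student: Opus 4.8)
The plan is to combine the Lichnerowicz formula stated above with the Atiyah--Singer index theorem. Since $M$ is even-dimensional and spin, the complex spinor bundle splits as $S = S^+\oplus S^-$, and twisting by $E$ gives an elliptic first-order operator $D^E\colon C^\infty(S^+\otimes E)\to C^\infty(S^-\otimes E)$ whose formal adjoint is the corresponding Dirac operator in the opposite direction; both are Fredholm because $M$ is closed. The first step is to record that, because $E$ carries a flat unitary connection, the Bochner term coming from the curvature of $E$ vanishes, so the Lichnerowicz formula retains exactly the stated form $D^{E,2} = -\Delta^E + \tfrac{k}{4}$ on all of $S\otimes E$, not merely on $S^+\otimes E$.

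Second, I would run the standard vanishing argument. Compactness of $M$ provides a constant $k_0>0$ with $k\geq k_0$ everywhere, so for every smooth section $\phi$ of $S\otimes E$ one has, after integration by parts,
\[
\|D^E\phi\|_{L^2}^2 = \|\nabla^E\phi\|_{L^2}^2 + \int_M \frac{k}{4}\,|\phi|^2 \ \geq\ \frac{k_0}{4}\,\|\phi\|_{L^2}^2 .
\]
Applying this to sections of $S^+\otimes E$ shows $\ker D^E = 0$, and applying it to sections of $S^-\otimes E$ shows that the kernel of the formal adjoint is also $0$; hence the Fredholm index satisfies $\operatorname{ind}(D^E) = \dim\ker D^E - \dim\operatorname{coker} D^E = 0$.

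Third, I would invoke the Atiyah--Singer index theorem for the twisted spin Dirac operator, which identifies this index with the characteristic number $\langle \widehat{A}(M)\operatorname{ch}(E),[M]\rangle$. Comparing with the previous step yields $\langle \widehat{A}(M)\operatorname{ch}(E),[M]\rangle = 0$ for every unitary flat $E$, which is the assertion.

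The only delicate point — more a bookkeeping issue than a genuine obstacle — is the first step: one must check that twisting by the flat bundle $E$ contributes no additional zeroth-order curvature term to $D^{E,2}$, and this is precisely where the unitarity and flatness of $E$ enter. Everything after that is the textbook Lichnerowicz vanishing argument together with a black-box application of the index theorem.
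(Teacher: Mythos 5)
Your proposal is correct and follows exactly the argument the paper itself sketches before the theorem statement: the flatness and unitarity of $E$ keep the Lichnerowicz formula free of a twisting curvature term, positivity of $k$ on the compact $M$ forces $\ker D^E = \operatorname{coker} D^E = 0$, and the Atiyah--Singer index theorem converts the vanishing index into the vanishing of $\langle \widehat{A}(M)\operatorname{ch}(E),[M]\rangle$. Nothing is missing.
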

This theorem gives a topological obstruction to the existence of positive scalar curvature. Of course for each single flat vector bundle $E$, its Chern character $\operatorname{ch}(E)=1$ does not carry extra information.
The Rosenberg index describes an universal obstruction to positive scalar curvature over all possible flat vector bundles $E$ which does carry extra information.

Let $\widetilde{M}$ be the universal covering that can be viewed as a principal $\pi_1(M)$-bundle over $M$, let $E$ be the universal flat Hilbert $C^\ast \pi_1(M)$-module bundle given by the associated product $\widetilde{M}\times_{\pi_1(M)} C^\ast \pi_1(M)$.
\begin{definition}
	Let $D^E$ be the Dirac operator twisted by the universal flat Hilbert module bundle. The Rosenberg index $[\alpha]\in K_0(C^\ast \pi_1(M))$ of $M$ is defined to be the index of $D^E$.
\end{definition}

Alternatively, the Rosenberg index can be described by using the machinery of $KK$-theory and Kasparov product. Let $F$ be the smooth function $F(x)=x(1+x^2)^{-1/2}$ or any other smooth functions that tends to $\pm 1$ as $x\to \pm \infty$. Write $F(D)$ for the functional calculus of the Dirac operator, it determines an element 
\begin{equation}\label{eq-dirac-k-homology}
	[C(M, S^{TM}), F(D)]
\end{equation}
in the group $KK(C(M), \mathbb{C})$. On the other hand, the universal flat Hilbert module bundle $E\to M$ determines a $KK$-theory element 
\begin{equation}\label{eq-k-theory-e}
	[C(M,E),0]
\end{equation}
 in $KK\left(\mathbb{C}, C(M) \otimes C^\ast \pi_1(M)\right)$. The Kasparov product of \eqref{eq-k-theory-e} with \eqref{eq-dirac-k-homology} is precisely the Rosenberg index of $M$. 
 
 On the other hand, if $M$ is of odd dimensional, then $M\times S^1$ is a closed spin manifold of even dimensional and $$K_0\left(C^\ast \pi_1(M)\times \mathbb{Z}\right) = K_0(C^\ast \pi_1(M))\otimes 1\oplus K_1(C^\ast \pi_1(M))\otimes e,$$
 where $1$ is the generator of $K_0(C^\ast \mathbb{Z})$ and $e$ is the generator of $K_1(C^\ast \mathbb{Z})$. Then the Rosenberg index of $M$ is defined by the requirement 
 \begin{equation}\label{eq-odd-rosenberg-index}
 \alpha(M)\otimes e = \alpha(M\times S^1).
 \end{equation}

 The following result is the universal version of Theorem~\ref{thm-classical-Lich} for flat vector bundles.

\begin{theorem}{\cite{RosenberIII86}}
	If $M$ admits positive scalar curvature, then its Rosenberg index $[\alpha]$ is zero.
\end{theorem}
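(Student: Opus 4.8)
The plan is to repeat the Lichnerowicz argument essentially verbatim, but now over the $C^\ast$-algebra $C^\ast\pi_1(M)$ in place of $\C$, exploiting that the Mishchenko-type bundle $E$ is flat. First I would record the Bochner--Lichnerowicz--Weitzenb\"ock identity for the twisted Dirac operator $D^E$ acting on smooth sections of $S^{TM}\otimes E$. Because the connection on $E$ is flat, the twisting-curvature term drops out and one obtains
\[
(D^E)^2 = -\Delta^E + \frac{k}{4},
\]
exactly as in the Lichnerowicz formula stated above, now as an identity of order-two differential operators with values in the Hilbert $C^\ast\pi_1(M)$-module bundle $S^{TM}\otimes E$. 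Nothing beyond the local computation behind the classical formula is needed, since the flat connection on $E$ contributes no curvature.

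Next I would install the functional-analytic framework. Let $\mathcal{E}$ denote the Hilbert $C^\ast\pi_1(M)$-module obtained by completing the smooth sections of $S^{TM}\otimes E$; it is $\Z/2$-graded by the chirality of $S^{TM}$ and $D^E$ is an odd operator. Since $M$ is closed, $D^E$ extends to a regular self-adjoint operator on $\mathcal{E}$ with $(D^E\pm i)^{-1}\in\mathcal{K}(\mathcal{E})$, so the pair $(\mathcal{E},D^E)$ represents an element of $KK(\C,C^\ast\pi_1(M))=K_0(C^\ast\pi_1(M))$, namely the Rosenberg index. The hypothesis gives $k\geq k_0>0$ on the compact manifold $M$, so the identity above yields $(D^E)^2\geq \tfrac{k_0}{4}$; hence $D^E$ is invertible and $\mathrm{spec}(D^E)$ misses a neighbourhood of $0$. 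The normalizing function may then be taken to be the sign function $\chi$, which is continuous on $\mathrm{spec}(D^E)$, so that $F=\chi(D^E)$ is an odd self-adjoint operator with $F^2=1$; then $(\mathcal{E},F)$ is a degenerate Kasparov $(\C,C^\ast\pi_1(M))$-module (scalars commute with $F$, and $F^2-1=0$, $F-F^\ast=0$ exactly). Its class, the Rosenberg index, is therefore zero in $K_0(C^\ast\pi_1(M))$. This settles the even-dimensional case.

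For odd-dimensional $M$ I would pass to $M\times S^1$ with the product metric: its scalar curvature equals that of $M$, hence is positive, and $M\times S^1$ is a closed even-dimensional spin manifold, so the previous step gives $\alpha(M\times S^1)=0$. Under the decomposition $K_0(C^\ast\pi_1(M)\times\Z)=K_0(C^\ast\pi_1(M))\otimes 1\oplus K_1(C^\ast\pi_1(M))\otimes e$ and the defining relation $\alpha(M)\otimes e=\alpha(M\times S^1)$ from \eqref{eq-odd-rosenberg-index}, injectivity of $x\mapsto x\otimes e$ onto the second summand forces $\alpha(M)=0$.

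The main obstacle is not the geometry, which is local and classical, but the Hilbert-module operator theory: one must check that $D^E$ is regular and self-adjoint with resolvent in $\mathcal{K}(\mathcal{E})$ over the closed manifold $M$, that the Weitzenb\"ock identity genuinely produces a spectral gap at $0$ for this unbounded operator over $C^\ast\pi_1(M)$ (rather than merely a lower bound modulo compacts), and that a choice of normalizing function yields an honest degenerate representative in the correct $KK$-group. All of this is by now standard for Mishchenko-type bundles over closed manifolds, so once the framework is fixed the remaining verifications are routine.
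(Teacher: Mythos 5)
Your argument is correct and is the standard Mishchenko--Fomenko/Lichnerowicz proof of Rosenberg's theorem: flatness of $E$ kills the twisting curvature, uniform positivity of $k$ on the closed manifold $M$ gives a spectral gap for the regular self-adjoint operator $D^E$, the sign function then yields a degenerate Kasparov $(\C, C^\ast\pi_1(M))$-module, and the odd case follows from the defining relation \eqref{eq-odd-rosenberg-index} together with injectivity of $x\mapsto x\otimes e$. The paper cites this theorem from the literature without reproving it, and your proposal matches the argument of the cited reference (and the strategy the paper later generalizes), so there is nothing to add.
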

\begin{remark}\label{rmk-even-case-suffices}
	According to the definition of Rosenberg index in the odd dimensional case, it suffices to prove the theorem in the even dimensional case.
\end{remark}

If, now, $(M,F)$ is foliated, it is natural to ask for obstructions to leafwise positive scalar curvature. Notably the following theorem is first proved by Connes\cite{Connes83} in the case when $F$ is spin and later generalized by Zhang\cite{Zhang17} to the case when $M$ is spin.

\begin{theorem}
	Let $F$ be an integrable subbundle of the tangent bundle of closed oriented smooth manifolds $M$. If either $F$ or $TM$ is spin and $F$ carries leafwise positive scalar curvature then $\langle \widehat{A}(M), [M]\rangle=0$.
\end{theorem}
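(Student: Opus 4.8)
\medskip\noindent\textbf{Proposed proof.}
The plan is to read off $\langle\widehat A(M),[M]\rangle$ from the index of a Dirac-type operator along the leaves, whose square is controlled from below --- through a Lichnerowicz-type identity --- by the strictly positive leafwise scalar curvature together with a ``transverse'' error term; the index then vanishes for analytic reasons while still computing $\langle\widehat A(M),[M]\rangle$ for topological ones. When $F$ is spin this is, in essence, Connes' original argument \cite{Connes83}: form the honest leafwise Dirac operator $D_F$ on the spinor bundle $S^F$, so that the leafwise Lichnerowicz formula gives $D_F^2=(\nabla^F)^*\nabla^F+\tfrac14 k_F$ with $k_F>0$ the leafwise scalar curvature; then $D_F$ is leafwise invertible and its index class in $K_0(C^\ast(M,F))$ vanishes. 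Pairing this class with the cyclic cocycle on a smooth subalgebra of $C^\ast(M,F)$ built from the transverse fundamental class of $(M,F)$ and from $\widehat A(TM/F)$, and invoking Connes' longitudinal index theorem, one gets that the pairing equals $\langle\widehat A(F)\,\widehat A(TM/F),[M]\rangle=\langle\widehat A(M),[M]\rangle$ --- here one uses $\widehat A(TM)=\widehat A(F)\,\widehat A(TM/F)$ and that $M$ oriented with $F$ spin forces $TM/F$ to be transversely oriented --- and since the class vanishes, so does this number.

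\emph{The case where only $TM$ is spin.} Now there is no leafwise spinor bundle, and I would follow Zhang \cite{Zhang17} by enlarging the foliation. Let $q=\operatorname{rank}(TM/F)$ and let $\pi\colon\mathcal M\to M$ be the Connes fibration, whose fibre over $x$ is the space of Euclidean inner products on $(TM/F)_x$; the fibres are diffeomorphic to $\R^{q(q+1)/2}$, hence contractible. Using that the Bott connection on $TM/F$ is flat along the leaves of $F$, I would equip $\mathcal M$ with an integrable distribution $\mathcal F\subset T\mathcal M$ that contains the vertical bundle $V=\ker\pi_*$ and projects onto $F$ (its leaves are ``thickenings'' of the leaves of $F$), a complement $\mathcal H$ with $T\mathcal M=\mathcal F\oplus\mathcal H$ and $\operatorname{rank}\mathcal H=q$, and a metric making $\mathcal F\perp\mathcal H$ with the fibres of $\pi$ totally geodesic inside the leaves of $\mathcal F$. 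A computation with the submersion formulas --- using the flatness of the Bott connection along the leaves and the fact that dilating the fibre metric shrinks its curvature --- then shows that after dilating the fibre directions by a large factor $T$ the leafwise scalar curvature $k_{\mathcal F}$ of $(\mathcal M,\mathcal F)$ stays bounded below by a positive constant independent of $T$; it is precisely these extra vertical directions that create the room needed for the estimate below.

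\emph{Sub-Dirac operator, adiabatic limit, and conclusion.} Since in general neither $\mathcal F$ nor $\mathcal H$ is spin, I would construct out of the spin structure of $\pi^\ast TM$ Zhang's \emph{sub-Dirac operator} $D^{\mathcal F}$ \cite{Zhang17}: a first-order operator on a $\Z_2$-graded Hermitian bundle $\mathcal E\to\mathcal M$ that is a module over $\Cliff(\mathcal F)$ and whose principal symbol in codirections along $\mathcal F$ is Clifford multiplication, so that $D^{\mathcal F}$ is elliptic along the leaves while the $\mathcal H$-directions enter only as bounded coefficients; and I would establish the Lichnerowicz-type identity $(D^{\mathcal F})^2=(\nabla^{\mathcal F})^*\nabla^{\mathcal F}+\tfrac14 k_{\mathcal F}+\mathcal R$, where $\mathcal R\in\Gamma(\End\mathcal E)$ is assembled from the curvature of $\mathcal H$ and the second fundamental form of $\mathcal F$ in $T\mathcal M$ and so is $O(1/T)$ under the dilation above. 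For $T\gg0$ this gives $(D^{\mathcal F})^2\ge c>0$ uniformly on all of $\mathcal M$, non-compact ends included, so $D^{\mathcal F}$ is invertible and its leafwise index --- which is independent of $T$ --- vanishes; on the other hand, because the fibres of $\pi$ are contractible, an adiabatic-limit computation of this index, with the index density concentrating as $T\to\infty$ near a section $s(M)\cong M$, evaluates it as $\langle\widehat A(M),[M]\rangle$. Hence $\langle\widehat A(M),[M]\rangle=0$. I expect the two genuine difficulties to be precisely the ingredients needed to make this work in the non-spin, non-compact setting: constructing the sub-Dirac operator and verifying that its Bochner remainder $\mathcal R$ is purely transverse (so that it is absorbed by the adiabatic parameter), and the analysis over the non-compact manifold $\mathcal M$ --- uniform invertibility of $D^{\mathcal F}_T$ at the ends, and the identification of its leafwise index with the closed-manifold characteristic number $\langle\widehat A(M),[M]\rangle$.
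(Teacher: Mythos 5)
Your overall strategy is the right one and matches the route that Zhang \cite{Zhang17} takes (and that this paper adapts for its own main theorem): pass to the Connes fibration $p\colon\mathcal{M}\to M$, build a sub-Dirac operator from the spin structure on $TM$, rescale the metric so that the leafwise Lichnerowicz term dominates the transverse error terms, and identify the resulting index with $\langle\widehat A(M),[M]\rangle$ by localization to the section $M\hookrightarrow\mathcal{M}$. The $F$-spin case is correctly attributed to Connes, modulo the genuine difficulty there of extending the transverse fundamental class cocycle to a holomorphically closed subalgebra containing the index class, which you wave at but do not address.

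In the $TM$-spin case, however, there is a gap at the analytic heart of the argument. You claim $(D^{\mathcal F})^2\ge c>0$ uniformly on all of $\mathcal M$, ``non-compact ends included'', from leafwise positive scalar curvature plus a fibre dilation. This cannot work as stated: whatever rescaling you use, the Bochner remainder of the sub-Dirac operator at points of $\mathcal M$ far from the section is not controlled by $k_{\mathcal F}$, and, more fundamentally, an operator on the non-compact manifold $\mathcal M$ has no Fredholm index to speak of without an additional mechanism forcing invertibility at infinity. The missing ingredient --- which is exactly what Zhang introduces and what the present paper leans on throughout --- is the order-zero Witten-type deformation $T\widehat{c}(V)$, where $V(x)=\ell(x)v(x)$ is the fibrewise radial vector field vanishing precisely on $M\subset\mathcal M$. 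The term $T^2\ell^2$ in $(D+T\widehat{c}(V))^2$ dominates at infinity, using the uniform bounds on $\nabla V$ that come from the nonpositive curvature of $\operatorname{GL}^+(k,\mathbb{R})/\operatorname{SO}(k)$ (Proposition~\ref{prop-key-estimates} and the $\beta,\varepsilon$-independent constant $\theta$ of Remark~\ref{remk-constant-theta}), while on a fixed neighbourhood of $M$ the rescaled Lichnerowicz term $k^F/4\beta^2$ dominates after choosing $T=1/(a\beta)$ and $\beta,\varepsilon$ small, as in Section~\ref{sec-invertible}. The same deformation is what identifies the index with $\langle\widehat A(M),[M]\rangle$: the index localizes to the zero set of $V$, namely $M$, by a Bismut--Lebeau-type argument, not by an adiabatic limit of the fibration, which by itself neither produces a well-defined index on the non-compact total space nor evaluates it. Your closing paragraph correctly names these two issues as ``the genuine difficulties'', but supplying $T\widehat{c}(V)$ and the attendant estimates is the proof, not a refinement of it.
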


\subsection{Main Theorem}

In this paper, we shall prove the following result.

\begin{theorem}\label{thm-main}
	If $M$ is a foliated closed spin manifold with leafwise positive scalar curvature, then its Rosenberg index is zero.
\end{theorem}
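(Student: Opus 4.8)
\emph{Sketch of the intended argument.}

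By Remark~\ref{rmk-even-case-suffices} we may assume $\dim M$ is even: if $\dim M$ is odd, replace $(M,F)$ by $(M\times S^1,F)$, whose leaves are those of $(M,F)$, so leafwise positive scalar curvature is preserved, $M\times S^1$ is again spin and even--dimensional, and \eqref{eq-odd-rosenberg-index} reduces the claim to this case. Fix a Riemannian metric $g=g^F\oplus g^{F^\perp}$ compatible with the splitting $TM=F\oplus F^\perp$ whose leafwise scalar curvature $k^F$ is positive; by compactness $k^F\geq c_0>0$. The plan is to rerun Connes' adiabatic argument \cite{Connes83}, in the form given by Zhang \cite{Zhang17} for $M$ spin (rather than $F$ spin), but with coefficients in the universal flat Hilbert $C^\ast\pi_1(M)$--module bundle $E$.

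\textbf{Step 1: a sub-Dirac operator with Hilbert module coefficients.} Since $M$ is spin, the spinor bundle $S^{TM}$ exists; following \cite{Zhang17} I would use it to build a $\mathbb{Z}_2$--graded Clifford module $\sheafS$ over $M$ carrying commuting Clifford actions along $F$ and along $F^\perp$, together with the associated sub-Dirac operator $D^F$. It is a self-adjoint, first order, elliptic operator whose principal symbol is Clifford multiplication on $TM$, so that $[\,\sheafS,\,F(D^F)\,]$ coincides, under the identifications of \cite{Zhang17}, with the Dirac $K$--homology class \eqref{eq-dirac-k-homology}. Twisting $D^F$ by the canonical flat connection on $E$ gives a regular self-adjoint operator $D^{F,E}$ on the Hilbert $C^\ast\pi_1(M)$--module $L^2(M,\sheafS\otimes E)$, whose index is the Kasparov product of \eqref{eq-k-theory-e} with $[\,\sheafS,F(D^F)\,]$; equivalently, by the Mishchenko--Fomenko index theorem this index equals $\langle\widehat{A}(M)\operatorname{ch}(E),[M]\rangle=\alpha(M)$. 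The point of Step 1 is that this identification holds for every metric and for every value of the adiabatic parameter below.

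\textbf{Step 2: the adiabatic Lichnerowicz estimate.} For $\varepsilon\in(0,1]$ set $g_\varepsilon=g^F\oplus\varepsilon^{-1}g^{F^\perp}$ and let $D^{F,E}_\varepsilon$ be the corresponding twisted sub-Dirac operator. The Bochner--Lichnerowicz formula for the sub-Dirac operator, analysed as in \cite{Connes83,Zhang17} in the limit $\varepsilon\to0$, takes the form
\[
(D^{F,E}_\varepsilon)^2=(\nabla^\varepsilon)^\ast\nabla^\varepsilon+\frac{k^F}{4}+\varepsilon^{1/2}A_\varepsilon+\varepsilon B_\varepsilon+R^E,
\]
where $A_\varepsilon,B_\varepsilon$ are fibrewise endomorphisms acting through $\operatorname{End}(\sheafS)\otimes 1$, bounded uniformly in $\varepsilon$, and the twisting curvature term $R^E$ vanishes because $E$ is \emph{flat}. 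Hence, for every section $\xi$ of $\sheafS\otimes E$,
\[
\big\langle (D^{F,E}_\varepsilon)^2\xi,\xi\big\rangle\ \geq\ \Big\langle\Big(\tfrac{k^F}{4}+\varepsilon^{1/2}A_\varepsilon+\varepsilon B_\varepsilon\Big)\xi,\xi\Big\rangle
\]
as elements of $C^\ast\pi_1(M)$, and the self-adjoint fibrewise endomorphism $\tfrac{k^F}{4}+\varepsilon^{1/2}A_\varepsilon+\varepsilon B_\varepsilon$ of $\sheafS$ is $\geq\tfrac{c_0}{4}-O(\sqrt\varepsilon)\geq\tfrac{c_0}{8}$ for $\varepsilon$ small, and tensoring with $E$ preserves this positivity. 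Thus $(D^{F,E}_\varepsilon)^2\geq\tfrac{c_0}{8}$ as an adjointable operator, so $D^{F,E}_\varepsilon$ is invertible, its bounded transform is already invertible, the associated Kasparov module is degenerate, and $\operatorname{ind}(D^{F,E}_\varepsilon)=0$ in $K_0(C^\ast\pi_1(M))$.

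\textbf{Conclusion and main obstacle.} Combining Steps 1 and 2 gives $\alpha(M)=\operatorname{ind}(D^{F,E}_\varepsilon)=0$ for $\varepsilon$ small (one may alternatively see the $\varepsilon$--independence of the index from homotopy invariance, since $\{D^{F,E}_\varepsilon\}_{\varepsilon\in(0,1]}$ is a continuous family of regular self-adjoint Fredholm operators over $C^\ast\pi_1(M)$). I expect the technical heart to be Step 2 carried out with Hilbert module coefficients: establishing the adiabatic Bochner formula for the twisted sub-Dirac operator with \emph{uniform} control of the error terms $A_\varepsilon,B_\varepsilon$, and verifying that the pointwise positivity of its zeroth order part survives tensoring with the $C^\ast\pi_1(M)$--module $E$, so that invertibility holds in the sense of regular operators on Hilbert modules. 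A secondary, more bookkeeping issue is Step 1 --- checking that the index of the twisted sub-Dirac operator really is $\alpha(M)$, which requires care with the $\mathbb{Z}_2$--gradings and Clifford actions built into $\sheafS$ and an appropriate Mishchenko--Fomenko index theorem for sub-Dirac operators.
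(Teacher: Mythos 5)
Your reduction to the even-dimensional case and your overall strategy (make a foliated Dirac-type operator with flat $C^\ast\pi_1(M)$-coefficients invertible via a Lichnerowicz estimate, so that its index, which equals $\alpha(M)$, vanishes) match the paper in spirit. But Step 2 contains a genuine gap, and it is not merely technical: for a general foliation the adiabatic Bochner formula on $M$ itself does \emph{not} have error terms of the form $\varepsilon^{1/2}A_\varepsilon+\varepsilon B_\varepsilon$ with $A_\varepsilon,B_\varepsilon$ uniformly bounded. When you rescale the transverse metric, the cross terms in $(D^{F,E}_\varepsilon)^2$ involve the leafwise derivatives of the transverse data (equivalently, the failure of $(M,F)$ to be a Riemannian foliation) and the integrability tensor of $F^\perp$; these contribute at the same order as the curvature term $k^F/4$ and cannot be absorbed for small $\varepsilon$. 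This is precisely the obstruction that forces Connes, Zhang, and the present paper to pass to the Connes fibration $\mathcal{M}\to M$: there the lifted transverse structure ($\mathcal{F}_1^\perp$ via the Bott connection, $\mathcal{F}_2^\perp$ via the symmetric-space metric) is rigid enough that, for the metrics $g^{T\mathcal{M}}_{\beta,\varepsilon}$, the error is $O_{R_0}(\beta^{-1}+\varepsilon^2\beta^{-2})$ against a leading term $k^{\mathcal{F}}/(4\beta^2)$ --- and even then only over a compact piece $\mathcal{M}_{R_0}$ of the noncompact total space, which is why the paper also needs the Witten-type deformation $T\widehat{c}(V)$ to control the ends of the fibers. (Note that \cite{Zhang17}, which you invoke as if it ran directly on $M$, itself works on the Connes fibration for exactly this reason.)

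Once one is forced onto $\mathcal{M}$, your Step 1 also becomes substantially harder than ``bookkeeping'': one must show that the deformed sub-Dirac operator on the noncompact manifold $\mathcal{M}$ still represents the Dirac $K$-homology class of $M$ after pairing with $E$. The paper does this by exhibiting $F\bigl(D_{S(\mathcal{F}_2^\perp),\beta,\varepsilon}+T\widehat{c}(V)\bigr)$ as a Kasparov product of \eqref{eq-Y-U} and \eqref{eq-U-C}, and then running the Bismut--Lebeau localization to split the class as $[C(M,S^{TM}),F(D)]\oplus[H_{T,4},F(A_{T,4})]$ with the second summand degenerate. None of this machinery appears in, or can be bypassed by, your outline, so the proposal as it stands does not yield a proof.
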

According to Remark~\ref{rmk-even-case-suffices}, we shall only prove the main theorem in the even dimensional case. In addition to the main theorem, we want to point out that if $M$ is not necessarily spin and $F$ is spin, the authors have defined the foliation version of Rosenberg index in \cite{suyi23} and proved that it is an obstruction to leafwise positive scalar curvature. 

\subsection{Outline of the proof}
Let $(M,F)$ be a foliated closed spin manifold of dimension $m$, the corresponding Connes fibration(\cite{Connes83}) $\mathcal{M}$ is defined to be the space of Euclidean metrics on $TM/F$. Let $k$ be the rank of $TM/F$. There is a natural projection $p: \mathcal{M}\to M$ whose typical fiber is the symmetric space $\operatorname{GL}^+(k,\mathbb{R})/\operatorname{SO}(k)$. The tangent space of the symmetric space at identity is the space of all symmetric matrices and denoted by $\mathfrak{p}$. As a consequence, there is a left invariant Riemannian metric on $\operatorname{GL}^+(k,\mathbb{R})/\operatorname{SO}(k)$ given by the Killing form  
$
(X,Y)\mapsto \operatorname{Tr}(XY).
$
	By choosing a distinguished Euclidean metric $g^{TM/F}$ on $TM/F$, we fix an embedding $M\hookrightarrow \mathcal{M}$. 

 As shown in \cite{Zhang17}, by using the Bott connection, the foliation $(M,F)$ can be lifted to the Connes' fibration $(\mathcal{M}, \mathcal{F})$, the metric $g^F$ on $F$ can also be lifted to a metric $g^{\mathcal{F}}$ on $\mathcal{F}$. The vertical tangent bundle of $p:\mathcal{M}\to M$ is denoted by $\mathcal{F}_2^\perp$, the Riemannian metric on the symmetric space $\operatorname{GL}^+(k,\mathbb{R})/\operatorname{SO}(k)$ induces a metric $g^{\mathcal{F}_2^\perp}$ on $\mathcal{F}_2^\perp$. The Euclidean metric $g^{TM/F}$ that we use to determine the embedding $M\hookrightarrow \mathcal{M}$ induces a metric $g^{\mathcal{F}_1^\perp}$ on the quotient bundle $\mathcal{F}_1^\perp = (T\mathcal{M}/\mathcal{F})/\mathcal{F}_2^\perp$. In fact, we shall need a more concrete description of $\mathcal{F}_1^\perp$(given in Remark~\ref{remk-local-connes-fibration}) to fix a decomposition
\begin{equation}\label{eq-decompose-f}
	T\mathcal{M} = \mathcal{F}\oplus \mathcal{F}_1^\perp\oplus \mathcal{F}_2^\perp.
\end{equation}
Then $T\mathcal{M}$ can be given a family of Riemannian structures 
\begin{equation}\label{eq-metric-beta-varepsilon}
g_{\beta, \varepsilon}^{T\mathcal{M}} = \beta^2g^{\mathcal{F}}\oplus \frac{1}{\varepsilon^2}g^{\mathcal{F}_1^\perp}\oplus g^{\mathcal{F}_2^\perp}
\end{equation}
for $\beta, \varepsilon>0$. We shall write $g^{T\mathcal{M}}$ for the one with $\beta=1$ and $\varepsilon=1$.
By restricting \eqref{eq-decompose-f} to $M$, we have an orthogonal splitting 
\begin{equation}\label{eq-N-TY-splitting}
T\mathcal{M}|_M=TM\oplus N,
\end{equation}
 where $\pi:N\to M$ is the normal bundle of $M$ inside $\mathcal{M}$ and $\pi$ is the bundle projection. If the normal bundle $N$ and the vertical tangent bundle $\mathcal{F}_2^\perp$ are not spin, one may modify the definition of Connes fibration in the following way.
 
 Let $\mathcal{M}^\prime$ be the space of Euclidean metrics on $TM/F\oplus TM/F\to M$ that make two copies of $TM/F$ orthogonal to each other. Again, a choice of Euclidean metric on $TM/F$ determines an embedding $M\hookrightarrow \mathcal{M}^\prime$. Then the orthogonal splitting \eqref{eq-N-TY-splitting} becomes
 \[
 T\mathcal{M}^\prime|_M =TM\oplus N\oplus N,
 \]
 and the vertical tangent bundle of $\mathcal{M}^\prime$ becomes $\mathcal{F}_2^\perp\oplus \mathcal{F}_2^\perp$. From now on, we shall assume that the normal bundle and the vertical tangent bundle are both spin.

 Let $n$ be the dimension of the total space of $\mathcal{M}$.
 Let $M\supset O\to \mathbb{R}^m$ be any local coordinate chart, locally the normal bundle can be trivialized by parallel sections. Under this light, we shall write 
 \begin{equation}\label{eq-y-z}
 	(y,Z)=(y_1,\cdots,y_m,Z_{m+1},\cdots, Z_n)
 \end{equation}
for a typical point in $\pi^{-1}(O)$.
 The Riemannian structure on the symmetric space determines the fiberwise exponential map
 \begin{equation}\label{eq-exponential-map}
 \exp: \mathcal{U}\to U
 \end{equation}
 that sends $(y,Z)$ to $\exp_y (Z)$ where $\exp_y(Z)$ is the exponential map along the fiber of $p:\mathcal{M}\to M$.
 It is an isomorphism between an open neighborhood $U$ of $M$ inside $\mathcal{M}$ and an open neighborhood $\mathcal{U}$ of the zero section in $N$.

For any point $x\in \mathcal{M}$ there is a unique geodesic along the fiber of $p: \mathcal{M} \to M$ connecting $p(x)$ with $x$ whose length is denoted by $\ell(x)$ and whose unit tangent vector at $x$ is denoted by $v(x)$. Let $V(x)=\ell(x)v(x)$, then $V$ can be viewed as a section of $\mathcal{F}_2^\perp\to \mathcal{M}$. It is straightforward to see that under local coordinate chart 
\begin{equation}\label{eq-local-V}
V(y,Z)= \sum_{i=m+1}^nZ_i\partial_{Z_i}=Z
\end{equation}
for all $(y,Z)\in \mathcal{U}$. 


There is a natural map $C(M)\to C(\mathcal{M})$ whose image consists of those smooth functions that are constant along fibers, then the space $C_0(\mathcal{M})$ is a $C(M)$-module. Now the pair 
\begin{equation}\label{eq-Y-U}
\left[C_0\left(\mathcal{M}, S(\mathcal{F}_2^\perp) \right), c(v)\right]
\end{equation}
determines an element in $KK(C(M), C_0(\mathcal{M}))$ (see also \cite[Section~5]{Connes83}). Let $D^{\mathcal{M}}_{\beta,\varepsilon}$ be the Dirac operator on the spinor bundle $S^{T\mathcal{M}}\to \mathcal{M}$ with respect to the metric $g^{T\mathcal{M}}_{\beta,\varepsilon}$.
Then, the pair 
\begin{equation}\label{eq-U-C}
	[L^2(\mathcal{M},S^{T\mathcal{M}}), F(D^{\mathcal{M}}_{\beta,\varepsilon})]
\end{equation}
determines an element in $KK(C_0(\mathcal{M}), \mathbb{C})$(see also \cite{HigsonRoe00}) and the class is independent of the choice of parameters $\beta,\varepsilon$. Let $D_{S(\mathcal{F}_2^\perp),\beta,\varepsilon}$ be Liu-Zhang's sub-Dirac operator(\cite{Zhang17} see also \cite{LiuZhang01}) on $S(\mathcal{F}\oplus \mathcal{F}_1^\perp)\otimes \wedge^\ast \mathcal{F}_2^\perp \to \mathcal{M}$ which, in our case, is precisely the twisted Dirac operator by another copy of $S(\mathcal{F}_2^\perp)$.

We shall first show that the Kasparov product $$KK(C(M), C_0(\mathcal{M}))\otimes KK(C_0(\mathcal{M}), \mathbb{C}) \to KK(C(M),\mathbb{C})$$ of \eqref{eq-Y-U} and \eqref{eq-U-C} can be represented by the Kasparov module
\begin{equation}\label{eq-kasparov-product-result}
	\left[L^2(\mathcal{M}, S(\mathcal{F}\oplus \mathcal{F}_1^\perp)\otimes \wedge^\ast\mathcal{F}_2^\perp), F(D_{S(\mathcal{F}_2^\perp),\beta,\varepsilon}+T\widehat{c}(V))\right],
\end{equation}
for all positive $T$.
Moreover, by methods of analytic localization\cite{BismutLebeau91}, the above Kasparov module is shown to be equivalent to a direct sum
\begin{equation}\label{eq-class-of-DY}
	[C(M, S^{TM}), F(D)]\oplus [H_{T,4}, F(A_{T,4})]
\end{equation}
where $H_{T,4}$ is an appropriate Hilbert space that depends on $T$ and parameters $\beta,\varepsilon$, and $A_{T,4}$ is an operator on $H_{T,4}$ with compact resolvent and $F(A_{T,4})$ is the corresponding functional calculus. These ingredients will be explained in due course. By definition, the Kasparov product of \eqref{eq-k-theory-e} with the first summand in \eqref{eq-class-of-DY} is the Rosenberg index of $M$ and the Kasparov product of \eqref{eq-k-theory-e} with the second summand in \eqref{eq-class-of-DY} will be shown to be zero. As a result, the Kasparov product 
\begin{equation}\label{eq-three-product}
KK(\mathbb{C},C(M)\otimes C^\ast \pi_1(M))\otimes KK(C(M), C_0(\mathcal{M}))\otimes KK(C_0(\mathcal{M}),\mathbb{C}) \to K_0(C^\ast \pi_1)
\end{equation}
of \eqref{eq-k-theory-e}, \eqref{eq-Y-U} and \eqref{eq-U-C} is precisely the Rosenberg index of $M$. On the other hand, with leafwise positive scalar curvature, by carefully choosing $\beta,\varepsilon$ and $T$, one can make $D_{S(\mathcal{F}_2^\perp),\beta,\varepsilon}+T\widehat{c}(V)$ an invertible operator. This would implies that \eqref{eq-three-product} is zero which finish the whole proof.

\subsection{Organization of the paper}
The paper is organized as follows: in Section~\ref{sec-sobolev-elliptic} we collect some basic Sobolev space and elliptic regularity results for noncompact manifolds; in Section~\ref{sec-kasparov-product} we prove that the Kasparov product of \eqref{eq-Y-U} and \eqref{eq-U-C} is given by \eqref{eq-kasparov-product-result}; in Section~\ref{sec-bismut-lebeau} we use Bismut-Lebeau analytic localization technique to decompose the operator $D_{S(\mathcal{F}_2^\perp),\beta,\varepsilon}+T\widehat{c}(V)$ into a two by two matrix
$
\begin{bmatrix}
	A_{T,1} & A_{T,2} \\
	A_{T,3} & A_{T,4}
\end{bmatrix}
$
and collect various estimates for each of the four entries. Nearly all these estimates can be found in \cite[Chapter~8 and 9]{BismutLebeau91}; As a consequence, we shall able to prove that the Kasparov module \eqref{eq-kasparov-product-result} is equivalent to \eqref{eq-class-of-DY} and the Kasparov product \eqref{eq-three-product} is the Rosenberg index of $M$ in Section~\ref{sec-equivalent-kasparov}. Finally, in Section~\ref{sec-invertible}, we shall prove that \eqref{eq-three-product} is zero in the presence of leafwise positive scalar curvature.


\section{Sobolev space and elliptic regularity}\label{sec-sobolev-elliptic}

Let $\mathcal{M}$ be a noncompact manifold, let $E\to \mathcal{M}$ be a Hermitian vector bundle with connection $\nabla$. The completion of the space $C^\infty_c(\mathcal{M},E)$ with respect to the norm 
\begin{equation}\label{eq-def-k-th-sobolev-norm}
||f||^2_{H^k} = ||f||^2_{L^2}+||\nabla f||^2_{L^2}+\cdots + ||\nabla^k f||^2_{L^2}
\end{equation}
is denoted by $H^k(\mathcal{M},E)$ and is called $k$-th Sobolev space.  Let $D: C^\infty_c(\mathcal{M},E)\to C^\infty_c(\mathcal{M},E)$ be a differential operator on $E\to \mathcal{M}$. If $O\subset \mathcal{M}$ is an open subset, then we shall say that $D$ is elliptic over $O$ if its restriction to $O$ is elliptic in the sense that its symbol is invertible outside the zero section of the cotangent bundle. Of course, we shall say that $D$ is elliptic if one can pick $O=\mathcal{M}$. If $\mathcal{M}$ was compact, then the sobolev spaces $H^k(\mathcal{M},E)$ are independent of the choice of metrics and connections on $E$. On the other hand, if $\mathcal{M}$ is noncompact, as in our case, the spaces $H^k$ are indeed dependent of the choice of metrics and connections. If $K\subset \mathcal{M}$ is a compact subset, the Sobolev space $H^k(K,E)$ is defined to be the completion of 
$$
\left\{f\in C^\infty_c(\mathcal{M},E) \mid \operatorname{supp}(f)\subset K \right\}
$$ 
with respect to the norm \eqref{eq-def-k-th-sobolev-norm}. This space is well-defined and, in fact, independent of the choice of metrics and connections.
Moreover, we have the following versions of Rellich lemma and Garding's inequality.  

\begin{proposition}(\cite[Section~10.4.3]{HigsonRoe00}, \cite[Theorem~2.34]{Aubin98Book})\label{prop-rellich-with-boundary}
	The inclusion $H^1(K,E)\hookrightarrow L^2(\mathcal{M},E)$ is a compact operator for all compact subset $K\subset \mathcal{M}$.
\end{proposition}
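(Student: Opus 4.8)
The plan is to reduce the statement to the classical Rellich lemma on bounded Euclidean domains by a partition-of-unity argument, using throughout that $H^1(K,E)$ is, as recorded just above, independent of the auxiliary metric and connection. Since $K$ is compact, the first step is to cover $K$ by finitely many coordinate charts $\kappa_j\colon O_j\xrightarrow{\sim}\Omega_j\subset\mathbb{R}^n$, $j=1,\dots,N$, over each of which $E$ is trivial, say $E|_{O_j}\cong O_j\times\mathbb{C}^r$, and to choose $\phi_j\in C_c^\infty(O_j)$ with $\sum_{j}\phi_j\equiv 1$ on an open neighbourhood of $K$. In the trivialisation over $O_j$ the given connection reads $\nabla=d+A_j$ for some $\operatorname{End}(\mathbb{C}^r)$-valued $1$-form $A_j$, and $A_j$, $d\phi_j$, the transition data of $\kappa_j$, and the two Hermitian metrics on $E$ are all bounded over the compact set $\operatorname{supp}\phi_j$. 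Combining this with the Leibniz rule $\nabla(\phi_j f)=d\phi_j\otimes f+\phi_j\nabla f$ yields an estimate
\[
\|\kappa_{j*}(\phi_j f)\|_{H^1(\mathbb{R}^n,\mathbb{C}^r)}\le C_j\,\|f\|_{H^1(K,E)}
\]
valid for every $f\in C_c^\infty(\mathcal{M},E)$ with $\operatorname{supp} f\subset K$; in other words $f\mapsto\kappa_{j*}(\phi_j f)$ extends to a bounded operator $T_j\colon H^1(K,E)\to H^1(\mathbb{R}^n,\mathbb{C}^r)$ whose image consists of functions supported in the fixed compact set $\kappa_j(\operatorname{supp}\phi_j)$.

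The second step is to invoke the classical Rellich lemma, in the form of \cite[Theorem~2.34]{Aubin98Book}: the inclusion into $L^2(\mathbb{R}^n,\mathbb{C}^r)$ of the $H^1$-functions supported in $\kappa_j(\operatorname{supp}\phi_j)$ is a compact operator. Composing $T_j$ with this compact inclusion and then with the evident bounded ``pull back along $\kappa_j^{-1}$ and extend by zero'' map $L^2(\mathbb{R}^n,\mathbb{C}^r)\to L^2(\mathcal{M},E)$ shows that $f\mapsto\phi_j f$ is a compact operator $H^1(K,E)\to L^2(\mathcal{M},E)$. Finally, since $\sum_j\phi_j f=f$ whenever $\operatorname{supp} f\subset K$, the inclusion $H^1(K,E)\hookrightarrow L^2(\mathcal{M},E)$ is the finite sum $\sum_j(f\mapsto\phi_j f)$ of compact operators, hence compact.

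I do not expect a genuine obstacle here; the argument is a routine localisation and the only points that need care are bookkeeping. One must check that the constants $C_j$ above are finite, which holds precisely because $\operatorname{supp}\phi_j$ is relatively compact inside a single chart and because---by the metric- and connection-independence of $H^1(K,E)$ noted earlier---the Euclidean $H^1$-norm used in the charts is equivalent, on sections supported in $K$, to the norm defined directly on $\mathcal{M}$; this independence also makes the conclusion insensitive to the chosen cover, trivialisations, and cutoffs. One should further note that every section occurring in the argument is compactly supported strictly inside a coordinate chart, so no regularity of the boundaries of the domains $\Omega_j$ is required and the bare Rellich lemma, rather than the full Rellich--Kondrachov theorem, is all that is used.
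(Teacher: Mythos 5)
Your proof is correct, and since the paper states this proposition without proof (citing Higson--Roe and Aubin), your partition-of-unity reduction to the classical Euclidean Rellich lemma is exactly the standard argument those references supply. The one point you rightly flag---that the metric- and connection-independence of $H^1(K,E)$, recorded in the paper just before the proposition, is what makes the chartwise Euclidean $H^1$-norms equivalent to the global one on sections supported in $K$---is handled correctly, so there is nothing to add.
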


\begin{proposition}\cite[Section~10.4.4]{HigsonRoe00}
Let $D$ be a first order differential operator on $\mathcal{M}$ and let $K$ be any compact subset of $\mathcal{M}$. If $D$ is elliptic over a neighborhood of $K$, then there is a constant $C> 0$ such that 
\begin{equation}\label{eq-elliptic-regularity}
||u||_{L^2(\mathcal{M},E)}+||Du||_{L^2(\mathcal{M},E)}\geq C\cdot ||u||_{H^1(K,E)}
\end{equation}
for all $u\in H^1(K,E)$.	
\end{proposition}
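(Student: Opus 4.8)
The plan is to prove this as a \emph{local} elliptic (Gårding) estimate, reducing the statement on the noncompact manifold $\mathcal{M}$ to the classical case on $\mathbb{R}^n$ by means of a partition of unity. First I would cover the compact set $K$ by finitely many coordinate charts $O_1,\dots,O_N$, each contained in the open neighborhood of $K$ over which $D$ is elliptic and each chosen small enough that $E$ is trivial over $O_i$, together with smooth compactly supported functions $\phi_i$ satisfying $\operatorname{supp}(\phi_i)\subset O_i$ and $\sum_i\phi_i\equiv 1$ on a neighborhood of $K$. For $u\in H^1(K,E)$ one then has $u=\sum_i\phi_i u$ with each $\phi_i u$ supported in $O_i\cap K$, so it suffices to bound $\|\phi_i u\|_{H^1}$; since $H^1(K,E)$ is independent of the auxiliary metric and connection used to define the norm, these norms may be computed in the chosen trivializations.

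Within a fixed chart, trivializing $E$ identifies the restriction of $D$ with a first-order system $D=\sum_{j=1}^n A_j(x)\partial_{x_j}+B(x)$ acting on $\mathbb{C}^\ell$-valued functions, where $A_j,B$ are smooth and matrix-valued, and ellipticity means that the principal symbol $\sigma(x,\xi)=i\sum_j A_j(x)\xi_j$ is invertible for $\xi\neq 0$, with $\|\sigma(x,\xi)^{-1}\|\le C_0|\xi|^{-1}$ uniformly for $x$ in the compact closure of the chart. The core of the argument is the constant-coefficient estimate: freezing the coefficients at a point $x_0$ and writing $D_{x_0}=\sum_j A_j(x_0)\partial_{x_j}$, the Plancherel theorem gives $\widehat{D_{x_0}v}(\xi)=\sigma(x_0,\xi)\hat v(\xi)$, whence $|\xi|\,|\hat v(\xi)|\le C_0|\sigma(x_0,\xi)\hat v(\xi)|$ and, integrating over $\xi$,
\[
\|v\|_{H^1}^2\le C\bigl(\|v\|_{L^2}^2+\|D_{x_0}v\|_{L^2}^2\bigr)
\]
for all $v\in C_c^\infty(\mathbb{R}^n,\mathbb{C}^\ell)$.

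The passage from frozen to variable coefficients is the step I expect to require the most care. Writing $D=D_{x_0}+(D-D_{x_0})$ with $D-D_{x_0}=\sum_j\bigl(A_j(x)-A_j(x_0)\bigr)\partial_{x_j}+B(x)$, one estimates, for $v$ supported in a ball of radius $\delta$ about $x_0$,
\[
\|(D-D_{x_0})v\|_{L^2}\le\Bigl(\sup_{|x-x_0|\le\delta}\textstyle\sum_j\|A_j(x)-A_j(x_0)\|\Bigr)\|\nabla v\|_{L^2}+C'\|v\|_{L^2},
\]
and by uniform continuity of the $A_j$ the supremum can be made smaller than any prescribed threshold once $\delta$ is small, so the gradient term is absorbed into the left-hand side of the frozen estimate. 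After refining the cover so each $O_i$ lies in such a small ball, this yields $\|\phi_i u\|_{H^1}\le C(\|\phi_i u\|_{L^2}+\|D(\phi_i u)\|_{L^2})$. Finally $D(\phi_i u)=\phi_i Du+[D,\phi_i]u$, where $[D,\phi_i]=\sum_j A_j\,\partial_{x_j}(\phi_i)$ is a bounded zeroth-order multiplication operator supported in $O_i$, so $\|D(\phi_i u)\|_{L^2}\le\|Du\|_{L^2}+C''\|u\|_{L^2}$; summing over $i$ gives \eqref{eq-elliptic-regularity}, with $C$ depending on $K$, $D$, and the fixed metric and connection. (Alternatively, one could invoke a microlocal parametrix $Q$ for $D$ over the ellipticity neighborhood, so that $QD=\mathrm{Id}+R$ with $R$ smoothing there and the estimate is read off directly; the elementary freezing argument is preferred here since it keeps the discussion self-contained.)
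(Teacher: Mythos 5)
The paper does not prove this proposition; it is quoted from Higson--Roe, and the argument given there is precisely the standard one you reproduce: reduce to coordinate charts by a partition of unity subordinate to the ellipticity neighborhood, prove the constant-coefficient Gårding estimate by Plancherel, absorb the variable-coefficient error on small enough charts, and control the commutators $[D,\phi_i]$ as zeroth-order terms. Your proposal is correct and follows essentially the same route as the cited source, so nothing further is needed.
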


\section{Kasparov product}\label{sec-kasparov-product}
	Let $\nabla^{T\mathcal{M}}$ be the Levi-Civita connection on $\mathcal{M}$ with respect to $g^{T\mathcal{M}}$, and $\nabla^{T\mathcal{M}|_M}$ be the induced connection on restricted bundle $T\mathcal{M}|_M$. Let $p^N$ and $p^{TM}$ be the projections onto the two components of $T\mathcal{M}|_M= TM\oplus N$, and let $p^{\mathcal{F}\oplus \mathcal{F}_1^\perp}$ and $p^{\mathcal{F}_2^\perp}$ be the projections with respect to the decomposition \eqref{eq-decompose-f}. Let 
\begin{equation}\label{eq-def-connections-n-tm}
\nabla^N = p^N \nabla^{T\mathcal{M}|_M} p^N, \quad \nabla^{TM} = p^{TM} \nabla^{T\mathcal{M}|_M} p^{TM},
\end{equation}
and
$$
\nabla^{\mathcal{F}\oplus \mathcal{F}_1^\perp} = p^{\mathcal{F}\oplus \mathcal{F}_1^\perp} \nabla^{T\mathcal{M}} p^{\mathcal{F}\oplus \mathcal{F}_1^\perp},  \quad \nabla^{\mathcal{F}_2^\perp} = p^{\mathcal{F}_2^\perp} \nabla^{T\mathcal{M}} p^{\mathcal{F}_2^\perp}.
$$
These connections can be lifted to corresponding connections on spinor bundles $\nabla^{S(T\mathcal{M})},\nabla^{S(\mathcal{F}\oplus \mathcal{F}_1^\perp)}$ and $\nabla^{S(\mathcal{F}_2^\perp)}$. Let $\nabla^{S(\mathcal{F}\oplus \mathcal{F}_1^\perp)\otimes \wedge^\ast \mathcal{F}_2^\perp}$ be the tensor product connection given by 
	 $$
	 \nabla^{S(T\mathcal{M})}\otimes 1+1\otimes \nabla^{S(\mathcal{F}_2^\perp)}.
	 $$ 
	 We shall add superscript $\beta,\varepsilon$ to these notations to indicate the change of metrics from $g^{T\mathcal{M}}$ to $g_{\beta,\varepsilon}^{T\mathcal{M}}$. For example $\nabla^{S(T\mathcal{M}),\beta,\varepsilon}$ and $\nabla^{S(\mathcal{F}\oplus \mathcal{F}_1^\perp)\otimes \wedge^\ast \mathcal{F}_2^\perp,\beta,\varepsilon}$.
	 
	 In fact, this is a general principle for our notations: we shall add sometimes superscript and sometimes subscript $\beta,\varepsilon$ to indicate the change from $g^{T\mathcal{M}}$ to $g_{\beta,\varepsilon}^{T\mathcal{M}}$.

\begin{remark}\label{remk-local-connes-fibration}
Let $O_i\xrightarrow{\varphi_i} \mathbb{R}^m$ be a finite set of local coordinate charts that covers $M$ and the quotient bundle $TM/F$ is trivial over each $O_i$. We shall denote by $\varphi_{ij}=\varphi_i\circ \varphi_j^{-1}$ the change-of-coordinates function. Fix local trivializations $\psi_i: TM/F|_{O_i}\to O_i\times \mathbb{R}^k$ and transition functions $\psi_{ij}= \psi_i \circ \psi_j^{-1}: O_i\cap O_j\to \operatorname{GL}(k, \mathbb{R})$. Then the restriction of $\mathcal{M}$ over $O_i$ is diffeomorphic to $O_i\times \operatorname{GL}(k,\mathbb{R})^+/\operatorname{SO}(k)$. The change-of-coordiantes function of $\mathcal{M}$ is given by 
\begin{align*}
O_i\cap O_j\times \operatorname{GL}(k,\mathbb{R})^+/\operatorname{SO}(k)&\to O_i\cap O_j\times \operatorname{GL}(k,\mathbb{R})^+/\operatorname{SO}(k)\\
(x,g)&\mapsto (\varphi_{ij}(x), \Gamma(\psi_{ij}(x))g),
\end{align*}
where $\Gamma(\psi_{ij}(x))$ acts on the space of Euclidean metrics by $\Gamma(\psi_{ij}(x))g)(X,Y) = g\left(\psi^{-1}_{ij}X,\psi_{ij}^{-1}Y\right)$ for all $X,Y\in \mathbb{R}^k$. The tangent bundle of $\mathcal{M}$ is given by assembly local trivializations 
\[
O_i\times \operatorname{GL}(k,\mathbb{R})^+/\operatorname{SO}(k)\times \mathbb{R}^m\times \mathfrak{p}
\]
according to transition functions 
\[
O_i\cap O_j \times \operatorname{GL}(k,\mathbb{R})^+/\operatorname{SO}(k)\times \mathbb{R}^m\times \mathfrak{p} \to O_i\cap O_j \times \operatorname{GL}(k,\mathbb{R})^+/\operatorname{SO}(k)\times \mathbb{R}^m\times \mathfrak{p}
\]
given by sending $(x,g,X,Y)$ to $\Big(\varphi_{ij}(x),\Gamma(\psi_{ij}(x))g, d\varphi_{ij}(x)X, d\Gamma(\psi_{ij}(x))Y\Big)$. The vector bundle $\mathcal{F}_1^\perp$ can be taken as assembly local trivializations 
\[
O_i\times \operatorname{GL}(k,\mathbb{R})^+/\operatorname{SO}(k)\times \mathbb{R}^k
\]
according to the transition function sending $(x,g,X)$ to $\Big(\varphi_{ij}(x),\Gamma(\psi_{ij}(x))g, \psi_{ij}(x)X\Big)$.
\end{remark}

\begin{proposition}\label{prop-key-estimates}
Recall that $\ell(x)$ is the length of the geodesic connecting $x$ and $p(x)$ and $v$ is the unit tangent vector at $x\in \mathcal{M}$ along the geodesic. Under the Riemannian metric $g^{T\mathcal{M}}$, we have
\begin{enumerate}
	\item 	
	The directional derivatives $X\ell$ are uniformly bounded with respect to all unit tangent vectors $X$ of $\mathcal{M}$;
	\item
		The covariant derivatives $\nabla^{\mathcal{F}_2^\perp}_X v$ are uniformly bounded with respect to all unit tangent vectors $X$ of $\mathcal{M}$;
			\item
	The covariant derivatives $\nabla^{\mathcal{F}_2^\perp}_X (\ell v)$ are uniformly bounded with respect to all unit leafwise tangent vectors $X\in C^\infty(\mathcal{M},\mathcal{F})$.
\end{enumerate}
\end{proposition}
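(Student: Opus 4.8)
First I would set up a convenient local model. By Remark~\ref{remk-local-connes-fibration}, $p\colon\mathcal{M}\to M$ is a fiber bundle over the \emph{compact} base $M$, with fiber the symmetric space $P:=\operatorname{GL}^+(k,\mathbb{R})/\operatorname{SO}(k)$ and structure group $\operatorname{GL}(k,\mathbb{R})$ acting on $P$ by isometries of the Killing-form metric $g_P$. Since $M$ is covered by finitely many of the charts $O_i$, all three assertions reduce to bounds that are uniform as one tends to infinity along $P$, and there are exactly two inputs: (a) the metric $g^{T\mathcal{M}}$ of \eqref{eq-decompose-f}--\eqref{eq-metric-beta-varepsilon} is block diagonal with $\mathcal{F}_2^\perp$-block $g_P$ depending only on the fiber variable, the remaining blocks depending only on the base variable, so that $\mathcal{F}\oplus\mathcal{F}_1^\perp$ is a \emph{metric} Ehresmann connection for $p$; and (b) $P$ is complete, simply connected, homogeneous, with sectional curvature in $[-\kappa^2,0]$ for some $\kappa>0$, so distance functions on $P$ obey Hessian comparison and the fiberwise exponential map has $\|D\exp^{-1}_P\|\le1$ by Rauch comparison.

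For (1), split a unit $X=X^v+X^h$ along $T\mathcal{M}=\mathcal{F}_2^\perp\oplus(\mathcal{F}\oplus\mathcal{F}_1^\perp)$. Restricted to the fiber through $x$, the function $\ell$ is the Riemannian distance to the single point $s(p(x))$ (the image of $x$ under the embedding $M\hookrightarrow\mathcal{M}$), hence $1$-Lipschitz, so $|X^v\ell|\le|X^v|\le1$. For $X^h$, take the horizontal curve through $x$ tangent to $\mathcal{F}\oplus\mathcal{F}_1^\perp$ whose $p$-image is the geodesic of $M$ with velocity $dp(X^h)$; since the connection is metric, parallel transport identifies neighbouring fibers isometrically, and along this curve $\ell$ becomes the distance in one fixed copy of $P$ from a fixed point to the moving point $s(\cdot)$, whose speed is the norm of the covariant derivative of the section $s$ in the direction $dp(X^h)$. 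This is bounded by a constant $C_s$ because $s$ is smooth and $M$ is compact, so $|X\ell|\le1+C_s$.

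For (2) and (3), observe that since $v$ is vertical, $\nabla^{\mathcal{F}_2^\perp}_Xv=p^{\mathcal{F}_2^\perp}\nabla^{T\mathcal{M}}_Xv$. For $X$ vertical the second fundamental form of the fibers lies in $\mathcal{F}\oplus\mathcal{F}_1^\perp$ and is killed by $p^{\mathcal{F}_2^\perp}$, so $\nabla^{\mathcal{F}_2^\perp}_Xv=(\operatorname{Hess}^P\ell)(X,\cdot)^\sharp$, and Hessian comparison gives $0\le\operatorname{Hess}^P\ell\le\kappa\coth(\kappa\ell)\,(g_P-d\ell\otimes d\ell)$, which stays bounded as $\ell\to\infty$. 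For the remaining directions I would pass to $V=\ell v$, the smooth vertical field with local form $V(y,Z)=Z$ of \eqref{eq-local-V}: differentiating along the metric connection, for $X\in C^\infty(\mathcal{M},\mathcal{F})$ one finds $\nabla^{\mathcal{F}_2^\perp}_XV$ equal, up to lower-order terms, to $-(D\exp^{-1}_P)\bigl(\text{covariant derivative of }s\bigr)$, bounded by $C_s$ via $\|D\exp^{-1}_P\|\le1$; the leafwise flatness of the Bott connection is exactly what ensures that the horizontal lift of such an $X$ carries no Killing-field correction, and this is why (3) is stated only for leafwise $X$ — for $X$ tangent to the fibers $\nabla^{\mathcal{F}_2^\perp}_XV=\ell\,(\operatorname{Hess}^P\ell)(X,\cdot)^\sharp+(X\ell)\,(d\ell)^\sharp$ grows linearly in $\ell$. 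To deduce (2) for arbitrary unit $X$ one writes $v=V/\ell$, so $\nabla^{\mathcal{F}_2^\perp}_Xv=\ell^{-1}\bigl(\nabla^{\mathcal{F}_2^\perp}_XV-(X\ell)v\bigr)$, combines the bound on $\nabla^{\mathcal{F}_2^\perp}_XV$ for horizontal $X$ with part (1), and handles the transverse pieces, which reappear after unit normalization in the harmless shape $|K|^{-1}(\operatorname{Hess}^P\ell)(K,\cdot)$; the tubular neighbourhood of $M$ is treated directly in the exponential coordinates \eqref{eq-exponential-map}--\eqref{eq-local-V}.

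The main obstacle is the non-compactness of the fibers: the horizontal lifts of transverse base vectors and the Killing fields generating the $\operatorname{GL}(k,\mathbb{R})$-action have $g_P$-norms that grow without bound on $P$, and each estimate holds only because the offending growth is either annihilated (leafwise flatness of the Bott connection), compensated by the $\coth(\kappa\ell)\to1$, equivalently $\ell^{-1}$, decay forced by nonpositive curvature, or absorbed by the $1$-Lipschitz and Rauch bounds. Carrying out this bookkeeping cleanly, and checking that the estimates persist near $M$ via \eqref{eq-exponential-map}--\eqref{eq-local-V}, is the technical heart of the argument; everything else is the compactness of $M$ together with standard comparison geometry on the symmetric space.
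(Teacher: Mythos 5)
Your skeleton is the same as the paper's: reduce to the finitely many trivializing charts of Remark~\ref{remk-local-connes-fibration} over the compact base, get the vertical derivative of $\ell$ from the $1$-Lipschitz property of the fiberwise distance function, get the horizontal derivative from smoothness of the center map $u\mapsto f_i(u)$ plus compactness of $M$, and control the derivatives of $v$ using nonpositive curvature of the fiber. The tools differ in parts (2)--(3): the paper computes $\nabla^{\mathcal{F}_2^\perp}_Xv$ for vertical $X$ by parallel translation to the center and the explicit symmetric-space formula $\tfrac12[\widetilde v,\widetilde X]$, handles $X\in\mathcal{F}_1^\perp$ via the Koszul identity $\nabla^{\mathcal{F}_2^\perp}_Yv=[Y,v]$ followed by an angle-comparison estimate for the displaced center, and for part (3) simply cites \cite[Lemma~2.1]{Zhang17}; you replace all of this with Hessian/Rauch comparison on a Hadamard manifold. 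For the vertical case that substitution is legitimate and arguably cleaner, since it makes the role of nonpositive curvature explicit rather than relying on Lie-theoretic identities.

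There are, however, two genuine gaps. First, part (3) says nothing about $X\in\mathcal{F}_1^\perp$, so your route to (2) for horizontal $X$ via $v=V/\ell$ covers only the leafwise directions, and the remark that the transverse pieces reappear as $|K|^{-1}(\operatorname{Hess}\ell)(K,\cdot)$ is not an argument: the horizontal lift of a transverse vector is not tangent to the fiber, so its contribution is the variation of the radial field as the center $f_i(u)$ moves, not a Hessian of $\ell$. You need the separate computation the paper performs here ($\nabla^{\mathcal{F}_2^\perp}_Yv=[Y,v]$ together with the comparison bound $|v-(\phi^Y_t)_*v|\le d(f(\phi^Y_tu),f(u))/\sqrt{d(g,f(\phi^Y_tu))\,d(g,f(u))}$), or an equivalent Jacobi-field estimate; the same quantitative input (that the Bott/horizontal transport displaces the center at a speed bounded uniformly over the noncompact total space) is the actual content of Zhang's lemma, which your ``no Killing-field correction'' slogan asserts but does not prove. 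Second, every bound you produce for $\nabla^{\mathcal{F}_2^\perp}_Xv$ degenerates like $\ell^{-1}$ as $\ell\to0$ (the factor $\kappa\coth(\kappa\ell)$, and the division by $\ell$ in $v=V/\ell$), and passing to exponential coordinates cannot repair this, since already in the flat model $v=Z/|Z|$ one has $|\nabla_Xv|=|Z|^{-1}$ for unit $X\perp Z$. The paper's own estimate for $X\in\mathcal{F}_1^\perp$ carries the same $\ell$ in the denominator, so this is a defect you share with the authors rather than introduce, but your write-up should at least state explicitly on which region the claimed uniform bound actually holds.
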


\begin{proof}
Notice that the third part of the proposition is precisely \cite[Lemma~2.1]{Zhang17}, we shall only prove the first two parts of the proposition.

	Let $\{O_i\}$ be a finite open cover of $M$ as given in Remark~\ref{remk-local-connes-fibration}
	By fixing an Euclidean metric on $TM/F$, there are embeddings $O_i\hookrightarrow O_i\times \operatorname{GL}(k,\mathbb{R})^+/\operatorname{SO}(k)$ given by smooth functions $f_i:O_i \to \operatorname{GL}(k,\mathbb{R})^+/\operatorname{SO}(k)$. Let $X$ be an unit vertical tangent vector at $(u,g)\in \mathcal{M}|_{O_i}$, then the directional derivative of $\ell$ with respect to $X$ is given by
\[
\lim_{t\to 0} \frac{\ell(u,\phi_t^X(g))- \ell(u,g)}{t}.
\]
According to the definition, 
$$
\ell(u,g)=d(g,f_i(u)),
$$
where $d$ is the distance along the symmetric space $\operatorname{GL}(k,\mathbb{R})^+/\operatorname{SO}(k)$.
The numerator above is less than or equal to $d(g,\phi^X_t(g))$ which is $\leq 2t$ for sufficiently small $t$. 

By parallel translating the vertical tangent vectors $X$ and $v$ back to $f(u)$ along the fibers, they can be identified with symmetric matrices for which we shall use the notations $\widetilde{X}$ and $\widetilde{v}$. Since $X$ and $v$ are both vertical, the covariant derivative $\nabla_X^{\mathcal{F}_2^\perp}v$ coincides with the Levi-Civita connection $\nabla_X^{\mathcal{F}_2^\perp}v=\frac{1}{2}[\widetilde{v},\widetilde{X}]$ of the symmetric space. Now if $\widetilde{X}$ and $\widetilde{v}$ satisfy $\operatorname{Tr}(\widetilde{X}^2)=\operatorname{Tr}(\widetilde{v}^2)=1$, then $\operatorname{Tr}([\widetilde{v},\widetilde{X}]^2)$ is uniformly bounded. This completes the proof for vertical tangent vectors $X$.

If $X$ is a local unit section of $\mathcal{F}_1^\perp\to \mathcal{M}$, according to the above Remark~\ref{remk-local-connes-fibration}, under the local trivialization we may assume that $X=(Y,0)$ where $Y$ is a local unit section of $p^\ast TM/F\to \mathcal{M}$. Then the directional derivative of $\ell$ with respect to $X$ is given by
\[
\lim_{t\to 0} \frac{\ell(\phi_t^Y(u),g)- \ell(u,g)}{t} \leq \lim_{t\to 0}\frac{d(f_i(\phi_t^Y(u)),f_i(u))}{t}.
\]
The limit is precisely the length of pushforward of $Y$ by $f_i$, which is also uniformly bounded. 

As for the covariant derivative $\nabla_X^{\mathcal{F}_2^\perp} v$, let $W$ be a symmetric matrix and taken as a vertical tangent vector of $\mathcal{M}$. According to the definition of Levi-Civita connection
\[
2\langle \nabla_Y^{\mathcal{F}_2^\perp} v, W\rangle = Y\langle v,W\rangle+\langle W,[Y,v]\rangle+\langle v,[W,Y]\rangle.
\]
Since the vertical metric $g^{\mathcal{F}_2^\perp}$ is parallel with respect to $Y$, we have 
\[
\nabla_Y^{\mathcal{F}_2^\perp} v = [Y,v].
\] 
Then the argument used in \cite[Lemma~2.1]{Zhang17} shows that $[Y,v]$ is uniformly bounded. More precisely, 
\[
[Y,v] = \lim_{t\to 0}\frac{(\phi_t^Y)_\ast v-v}{t}.
\]
Let $\alpha$ be the angle between $v$ and $(\phi_t^Y)_\ast v$, then $|v-(\phi_t^Y)_\ast v|^2=2-2\cos(\alpha)$. Since the symmetric space has nonpositive curvature,
\[
|v-(\phi_t^Y)_\ast v| \leq \frac{d(f(\phi_t^Y(u)),f(u))}{\sqrt{d(g,f(\phi_t^Y(u)))d(g,f(u))}}
\]
which is bounded by a uniform constant times $t$. This completes the proof.
\end{proof}

We shall use the formula
 \begin{equation}\label{eq-F-integral}
 F(D) = \frac{2}{\pi}\int_0^\infty D(1+\lambda^2+D^2)^{-1}d\lambda
 \end{equation}
 repetitively in the following calculation. Let $e_1,e_2\cdots, e_{n}$ be a local orthonormal frame of $T\mathcal{M}$ with respect to the decomposition \eqref{eq-decompose-f} and the metric $g^{T\mathcal{M}}$. Let
\begin{equation}\label{eq-basis-f}
f_i = 
\begin{cases}
	\beta^{-1}e_i  & \text{if $e_i$ is a section of $\mathcal{F}$},\\
	\varepsilon e_i & \text{if $e_i$ is a section of $\mathcal{F}_1^\perp$},\\
	e_i & \text{if $e_i$ is a section of $\mathcal{F}_2^\perp$},
\end{cases}
\end{equation}
then $f_1,f_2\cdots,f_n$ is a local orthonormal frame of $T\mathcal{M}$ with respect to the metric $g^{T\mathcal{M}}_{\beta,\varepsilon}$.
Then the sub-Dirac operator can be written as 
	\begin{equation}\label{eq-formula-sub-dirac-1}
	D_{S(\mathcal{F}_2^\perp),\beta,\varepsilon} = \sum_{i=1}^n c_{\beta,\varepsilon}(f_i) \nabla_{f_i}^{S(\mathcal{F}\oplus \mathcal{F}_1^\perp)\otimes \wedge^\ast \mathcal{F}_2^\perp,\beta,\varepsilon},
	\end{equation}
	where $c_{\beta,\varepsilon}$ is the Clifford multiplication with respect to the metric $g^{T\mathcal{M}}_{\beta,\varepsilon}$.

 We shall first verify the following Proposition.
 \begin{proposition}
 	The pair \eqref{eq-kasparov-product-result} is, indeed, a Kasparov $(C(M), \mathbb{C})$-module.
 \end{proposition}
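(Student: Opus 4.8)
The plan is to verify the three defining conditions of a Kasparov $(C(M),\mathbb{C})$-module for the pair $\bigl[L^2(\mathcal{M}, S(\mathcal{F}\oplus\mathcal{F}_1^\perp)\otimes\wedge^\ast\mathcal{F}_2^\perp), F(D_{S(\mathcal{F}_2^\perp),\beta,\varepsilon}+T\widehat{c}(V))\bigr]$: writing $A_T := D_{S(\mathcal{F}_2^\perp),\beta,\varepsilon}+T\widehat{c}(V)$ and $\Phi := F(A_T)$, I must show for every $\phi\in C(M)$ (acting by multiplication via the pullback $C(M)\to C(\mathcal{M})$) that (i) $[\Phi,\phi]$ is a compact operator, (ii) $(\Phi^2-1)\phi$ is compact, and (iii) $(\Phi-\Phi^\ast)\phi$ is compact. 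Condition (iii) is immediate because $A_T$ is formally self-adjoint — $D_{S(\mathcal{F}_2^\perp),\beta,\varepsilon}$ is a Dirac-type operator and $\widehat{c}(V)$ is self-adjoint (the vertical Clifford action $\widehat c$ on $\wedge^\ast\mathcal{F}_2^\perp$ applied to the self-adjoint section $V=\ell v$ of $\mathcal{F}_2^\perp$), so $F(A_T)$ is self-adjoint by the functional calculus once we know $A_T$ is essentially self-adjoint on the noncompact complete manifold $\mathcal{M}$; completeness of $g^{T\mathcal{M}}_{\beta,\varepsilon}$ and the bounded-geometry-type estimates give this via the standard Gaffney/Wolf argument.

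For conditions (i) and (ii) the key point is that $\phi$ has support pulled back from $M$, hence is constant along the fibers of $p:\mathcal{M}\to M$, so the commutator $[A_T,\phi]=c_{\beta,\varepsilon}(\operatorname{grad}\phi)$ involves only the horizontal (base) directions and is a bounded operator with norm controlled by $\|d\phi\|_\infty$; moreover, crucially, $[A_T,\phi]$ is supported where $\phi$ varies, which after multiplying on one side stays inside a set of the form $p^{-1}(K')$ with $K'\subset M$ compact. Using the integral formula $F(A_T)=\tfrac{2}{\pi}\int_0^\infty A_T(1+\lambda^2+A_T^2)^{-1}\,d\lambda$ together with the resolvent identity, one writes $[\Phi,\phi]$ as an integral of terms built from $(1+\lambda^2+A_T^2)^{-1}$, $[A_T,\phi]$, and $A_T(1+\lambda^2+A_T^2)^{-1}$; each such term, when composed with the cutoff coming from $\operatorname{supp}(d\phi)$, maps $L^2$ boundedly into $H^1(K',\cdot)$ for a fixed compact $K'$, because the resolvent improves regularity by one derivative (Gårding/elliptic regularity, Proposition stated in Section~2) and the operator norms decay like $(1+\lambda^2)^{-1}$ in $\lambda$ so the integral converges in operator norm. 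Then Rellich's lemma (Proposition~\ref{prop-rellich-with-boundary}) says $H^1(K',\cdot)\hookrightarrow L^2$ is compact, giving compactness of $[\Phi,\phi]$. For (ii) the analogous computation gives $(\Phi^2-1)\phi = -\tfrac{?}{?}\int\!\!\int(\cdots)$, which is again expressible through resolvents; one can more cleanly write $(F(A_T)^2-1)\phi = g(A_T)\phi$ for $g(x)=F(x)^2-1 = -(1+x^2)^{-1}$ (for the standard choice $F(x)=x(1+x^2)^{-1/2}$), and $(1+A_T^2)^{-1}\phi$ maps $L^2$ into $H^1(K',\cdot)$ — here one must ensure that elliptic regularity is available for $A_T$ on a neighborhood of $K'$, which holds because $A_T$ is elliptic everywhere (its principal symbol is that of the Dirac operator $D_{S(\mathcal{F}_2^\perp),\beta,\varepsilon}$, the zeroth-order perturbation $T\widehat c(V)$ not affecting the symbol) — and then Rellich again yields compactness.

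The main obstacle I anticipate is the noncompactness of $\mathcal{M}$: unlike the classical closed case, one cannot simply invoke that $F(D)$ has compact commutators with everything. The whole argument hinges on the observation that only $\phi$ pulled back from the compact base $M$ is relevant, so every commutator or "$\Phi^2-1$" term, once multiplied by $\phi$ on the appropriate side, is effectively supported in a fixed compact piece $p^{-1}(K')$ — but $p^{-1}(K')$ is still noncompact (the fibers $\operatorname{GL}^+(k,\mathbb{R})/\operatorname{SO}(k)$ are noncompact). The resolution is that the extra Clifford term $T\widehat c(V)$, with $V=\ell v$, grows linearly in the fiber direction (recall $V(y,Z)=Z$ in local coordinates by \eqref{eq-local-V}), so $A_T^2 = D^2_{S(\mathcal{F}_2^\perp),\beta,\varepsilon} + T^2|V|^2 + T[D_{S(\mathcal{F}_2^\perp),\beta,\varepsilon},\widehat c(V)]$ has a potential term $T^2\ell^2\to\infty$ at fiberwise infinity; combined with the uniform bounds on $\nabla^{\mathcal{F}_2^\perp}v$, $\nabla^{\mathcal{F}_2^\perp}(\ell v)$ and $X\ell$ from Proposition~\ref{prop-key-estimates}, this makes $A_T$ a Schrödinger-type operator with a confining potential on each fiber, hence $(1+A_T^2)^{-1}$ is not merely regularity-improving but genuinely compact after the horizontal cutoff. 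So in practice I would prove a lemma: \emph{$\phi(1+A_T^2)^{-1}$ is compact for every $\phi\in C_c(\mathcal{M})$ that is pulled back from $M$}, by combining interior elliptic estimates (to control the noncompact fiber directions via the confining potential $T^2\ell^2$) with Rellich in the compact base directions, and then feed this into the resolvent-integral representations of $[\Phi,\phi]$ and $(\Phi^2-1)\phi$ as above.
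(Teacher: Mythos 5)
Your proposal is correct and follows essentially the same route as the paper: both arguments rest on the Lichnerowicz-type expansion of $(D_{S(\mathcal{F}_2^\perp),\beta,\varepsilon}+T\widehat{c}(V))^2$, with the confining potential $T^2\ell^2$ dominating the cross term via Proposition~\ref{prop-key-estimates} to give compactness of the resolvent, combined with the integral formula \eqref{eq-F-integral} and the boundedness of $[D_{S(\mathcal{F}_2^\perp),\beta,\varepsilon},f]$ for $f$ pulled back from the compact base. The paper in fact establishes compactness of $(1+A_T^2)^{-1}$ globally (citing a Dirac-plus-confining-potential result) rather than only after a horizontal cutoff, but since $C(M)$ is unital this is the same statement as your proposed lemma.
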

 \begin{proof}
 	It suffices to check that 
 	\begin{equation}\label{eq-kasparov-module-1}
 	\left[f,F(D_{S(\mathcal{F}_2^\perp),\beta,\varepsilon}+T\widehat{c}(V))\right]
 	\end{equation}
 	and 
 	\begin{equation}\label{eq-kasparov-module-2}	
 	\left(1+\left(D_{S(\mathcal{F}_2^\perp),\beta,\varepsilon}+T\widehat{c}(V)\right)^{2}\right)^{-1}
 	\end{equation} 
 	are compact operators for all $f\in C(M)$. Indeed, 
 	\begin{equation}\label{eq-square-of-deformed-dirac}
 	\left(D_{S(\mathcal{F}_2^\perp),\beta,\varepsilon}+T\widehat{c}(V)\right)^{2} = D^2_{S(\mathcal{F}_2^\perp),\beta, \varepsilon}+T^2\ell^2+T \sum c_{\beta,\varepsilon}(f_i) \widehat{c}(\nabla^{\mathcal{F}_2^\perp}_{f_i}V).
 	\end{equation}
 	Since $\nabla_{f_i}^{\mathcal{F}_2^\perp} V = f_i (\ell) v+ \ell \nabla_{f_i}^{\mathcal{F}_2^\perp} v$, according to Proposition~\ref{prop-key-estimates}, there is a constant $\theta>0$ which is independent of $\beta,\varepsilon\in (0,1]$ such that  
 	\begin{equation}\label{eq-key-constant}
 	T \sum c_{\beta,\varepsilon}(f_i) \widehat{c}(\nabla^{\mathcal{F}_2^\perp}_{f_i}V)\leq 	T\theta(\ell+\beta^{-1}).
 		\end{equation} 
 		Therefore there is another positive constant $C$ such that \eqref{eq-square-of-deformed-dirac} $\geq D^2_{S(\mathcal{F}_2^\perp),\beta, \varepsilon} +\ell-C$ which, according to \cite[Prop~B.1]{YiannisYanli19}, implies that \eqref{eq-kasparov-module-2} is a compact operator.
 	
 	As for the compactness of \eqref{eq-kasparov-module-1}, we shall use the formula~\eqref{eq-F-integral} for $D=D_{S(\mathcal{F}_2^\perp),\beta,\varepsilon}+T\widehat{c}(V)$. Notice that the compactness of \eqref{eq-kasparov-module-2} implies the compactness of 
 	$$
 	\left(D_{S(\mathcal{F}_2^\perp),\beta,\varepsilon}+T\widehat{c}(V)\right)\left(1+\left(D_{S(\mathcal{F}_2^\perp),\beta,\varepsilon}+T\widehat{c}(V)\right)^{2}\right)^{-1}.
 	$$ 
 	According to the definition, $f\in C(M)$ is taken as a function on $\mathcal{M}$ which is constant along the fiber $\mathcal{M}\to M$,
 	and the commutator 
 	\begin{equation}\label{eq-commutator-f-sub-dirac}
 	[D_{S(\mathcal{F}_2^\perp),\beta,\varepsilon},f] = \sum c_{\beta,\varepsilon}(f_i) f_i(f)
 	\end{equation}
 	is bounded due to the compactness of $M$. The commutator of $f\in C(M)$ with the integrand in \eqref{eq-F-integral} is then a compact operator whose norm is $\mathcal{O}(1+\lambda^2)^{-1}$. This completes the proof.
 \end{proof}
 
 \begin{remark}\label{remk-constant-theta}
 	In the above proof, the constant $C$ may depends on the choice of $\beta$ and $\varepsilon$ while $\theta$ is independent of these choices. In fact, from this section onward until the penultimate section, there are many constants appearing in various estimates, $\theta$ is the only one that is independent of $\beta,\varepsilon$.
 \end{remark}
  
 We shall use the following Connes-Skandalis criterion for Kasparov product\cite[Definition~18.4.1]{Blackadar98}.
 
 \begin{proposition}
 	Let $A,B,C$ be $C^\ast$-algebras, $[E,F]\in KK(A,C)$ represents the KK-product of $[E_1, F_1]\in KK(A,B)$ and $[E_2, F_2]\in KK(B,C)$ if 
 	\begin{enumerate}
 		\item 
 		$F$ is a $F_2$-connection; 
 		\item
 		for all $a\in A$, $a[F_1\otimes 1, F]a^\ast \geq 0 $ modulo compact operators.
 	\end{enumerate}
 \end{proposition}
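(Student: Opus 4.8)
The plan is to recall the construction of the interior Kasparov product and then to observe that conditions~(1) and~(2) are exactly what pins it down. By definition the Hilbert $C$-module underlying the product of $x=[E_1,F_1]$ and $y=[E_2,F_2]$ is the internal tensor product $E_1\otimes_B E_2$, with left $A$-action $a\mapsto a\otimes1$; for $\xi\in E_1$ one has the creation operator $T_\xi\colon E_2\to E_1\otimes_B E_2$, $\eta\mapsto\xi\otimes\eta$, and we recall that $F$ is an $F_2$-connection when $T_\xi F_2-(-1)^{\partial\xi}F\,T_\xi$ and its adjoint lie in $\mathcal{K}(E_2,E_1\otimes_B E_2)$ for every homogeneous $\xi$. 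Write $\mathcal{E}$ for the set of operators $F$ on $E_1\otimes_B E_2$ such that $(E_1\otimes_B E_2,F)$ is a Kasparov $(A,C)$-module, $F$ is an $F_2$-connection, and $a[F_1\otimes1,F]a^\ast\geq0$ modulo $\mathcal{K}(E_1\otimes_B E_2)$ for every $a\in A$. The content of the proposition is that $\mathcal{E}$ is non-empty and that $[E_1\otimes_B E_2,F]$ is independent of the choice of $F\in\mathcal{E}$; that common class is by definition $x\otimes_B y$, so conditions~(1) and~(2) merely say ``$F\in\mathcal{E}$''.

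For existence I would fix one $F_2$-connection $\Phi$ of $F_2$ and apply Kasparov's technical theorem to the algebra $\mathcal{K}(E_1\otimes_B E_2)$, to the $C^\ast$-algebra generated by $A\otimes1$, $F_1\otimes1$, $\Phi$ and their commutators with $\mathcal{K}(E_1\otimes_B E_2)$, and to the derivation coming from the relevant graded commutators; the resulting positive contraction $N$, with $M:=1-N$, yields $F:=M^{1/2}(F_1\otimes1)+N^{1/2}\Phi\in\mathcal{E}$ (the $\Phi$-term gives the connection property, since $F_1\otimes1$ is a $0$-connection; the near-central behaviour of $M$ and $N$ gives the Kasparov-module relations; and $a[F_1\otimes1,F]a^\ast$ is positive modulo $\mathcal{K}$ because its leading term is $aM^{1/2}(F_1\otimes1)^2M^{1/2}a^\ast$, up to a controllable error). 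For independence of the choice, take $G_0,G_1\in\mathcal{E}$: subtracting the two connection conditions shows $G_0-G_1$ is a $0$-connection, so each $G_t=(1-t)G_0+tG_1$ is again an $F_2$-connection and, after the usual bookkeeping with compact perturbations, $(E_1\otimes_B E_2,G_t)$ is a Kasparov module for every $t$; positivity survives by convexity, since $a[F_1\otimes1,G_t]a^\ast=(1-t)\,a[F_1\otimes1,G_0]a^\ast+t\,a[F_1\otimes1,G_1]a^\ast\geq0$ modulo $\mathcal{K}$. Hence $t\mapsto(E_1\otimes_B E_2,G_t)$ is an operator homotopy and $[E,G_0]=[E,G_1]$. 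Replacing $E_1$ (resp.\ $E_2$) by a homotopic representative and running the construction over $B[0,1]$ (resp.\ $C[0,1]$) shows the class depends only on $x$ and $y$, and degenerate factors give degenerate, hence zero, products. This is the argument of \cite[Sections~18.4--18.5]{Blackadar98}.

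The step I expect to be the genuine obstacle is existence: producing even a single element of $\mathcal{E}$ is exactly Kasparov's technical theorem and carries all the analytic weight, while the independence and functoriality part is essentially formal once the perturbation lemmas are in place.
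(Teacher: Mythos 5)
The paper does not prove this proposition at all: it is stated as a black box with a \qed{} and a citation to \cite[Section~18.4]{Blackadar98}, since it is the standard Connes--Skandalis characterization of the Kasparov product. So there is no internal argument to compare against; what matters is whether your sketch of the textbook proof is sound. Your existence step is the standard one and is fine in outline: Kasparov's Technical Theorem applied to a fixed $F_2$-connection $\Phi$ produces $F=M^{1/2}(F_1\otimes 1)+N^{1/2}\Phi$ satisfying both conditions, and this is indeed where the analytic weight sits.

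The uniqueness step, however, has a genuine gap as written. Convexity does take care of condition~(2), since $a[F_1\otimes 1,\,\cdot\,]a^\ast$ is affine in the operator; and $G_0-G_1$ being a $0$-connection does make every $G_t$ an $F_2$-connection. But it does \emph{not} follow that $(E_1\otimes_B E_2, G_t)$ is a Kasparov module: expanding $G_t^2=(1-t)^2G_0^2+t^2G_1^2+t(1-t)(G_0G_1+G_1G_0)$, the requirement $a(G_t^2-1)a^\ast\in\mathcal{K}$ forces control of $a(G_0G_1+G_1G_0-2)a^\ast$, equivalently of $a(G_0-G_1)^2a^\ast$, and a $0$-connection need not have this square compact (it only gives compactness of $DT_\xi$ and $T_\xi^\ast D$, not of $aD^2a^\ast$). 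This is exactly the content of the Connes--Skandalis uniqueness lemma: one first proves, using that \emph{both} $G_0$ and $G_1$ are $F_2$-connections satisfying the positivity condition relative to $F_1\otimes 1$, that $a(G_0G_1+G_1G_0)a^\ast\geq 0$ modulo $\mathcal{K}$; this yields $aG_t^2a^\ast\geq$ (something positive) modulo $\mathcal{K}$ rather than $\equiv aa^\ast$, and one must then renormalize the path (functional calculus, or the perturbation lemmas of \cite[Section~18.3]{Blackadar98}) to obtain an honest operator homotopy. Your phrase ``after the usual bookkeeping with compact perturbations'' is concealing precisely this nontrivial step; without the graded-commutator positivity lemma the straight-line homotopy simply is not a path of Kasparov modules.
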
 \qed

\begin{proposition}
	The Kasparov product of \eqref{eq-Y-U} with \eqref{eq-U-C} is given by \eqref{eq-kasparov-product-result} for all positive $T$.
\end{proposition}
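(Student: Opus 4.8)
The plan is to verify the two conditions of the Connes--Skandalis criterion just stated, with $A = C(M)$, $B = C_0(\mathcal{M})$, $C = \mathbb{C}$, taking $[E_1, F_1]$ to be \eqref{eq-Y-U}, $[E_2, F_2]$ to be \eqref{eq-U-C}, and the candidate product $[E,F]$ to be \eqref{eq-kasparov-product-result}. First I would set up the identification of the underlying Hilbert modules: the interior tensor product $C_0(\mathcal{M}, S(\mathcal{F}_2^\perp)) \otimes_{C_0(\mathcal{M})} L^2(\mathcal{M}, S^{T\mathcal{M}})$ is unitarily isomorphic to $L^2(\mathcal{M}, S(\mathcal{F}_2^\perp) \otimes S^{T\mathcal{M}})$, and under the splitting \eqref{eq-decompose-f} one has $S^{T\mathcal{M}} \cong S(\mathcal{F}\oplus\mathcal{F}_1^\perp)\otimes S(\mathcal{F}_2^\perp)$, while $S(\mathcal{F}_2^\perp)\otimes S(\mathcal{F}_2^\perp)\cong \wedge^\ast \mathcal{F}_2^\perp$ as a $\mathbb{Z}/2$-graded bundle. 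This is exactly why the sub-Dirac operator $D_{S(\mathcal{F}_2^\perp),\beta,\varepsilon}$ on $S(\mathcal{F}\oplus\mathcal{F}_1^\perp)\otimes\wedge^\ast\mathcal{F}_2^\perp$ is, as remarked in the outline, the Dirac operator twisted by a spare copy of $S(\mathcal{F}_2^\perp)$; I would make this bookkeeping precise so that $c(v)$ acting on the left factor becomes, under the isomorphism, the operator $\widehat{c}(v)$ (Clifford action of the twisting $S(\mathcal{F}_2^\perp)$), and hence $T\widehat{c}(V) = T\ell\,\widehat{c}(v)$ corresponds to the ``$F_1 \otimes 1$'' piece up to the scalar $T\ell$.

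Next I would check condition (1), that $F(D_{S(\mathcal{F}_2^\perp),\beta,\varepsilon}+T\widehat{c}(V))$ is an $F(D^{\mathcal{M}}_{\beta,\varepsilon})$-connection. Since $D_{S(\mathcal{F}_2^\perp),\beta,\varepsilon}$ and $D^{\mathcal{M}}_{\beta,\varepsilon}$ differ only by the twisting bundle, the operator $D_{S(\mathcal{F}_2^\perp),\beta,\varepsilon}$ is literally an honest $D^{\mathcal{M}}_{\beta,\varepsilon}$-connection in the sense that for $\xi \in C_0(\mathcal{M}, S(\mathcal{F}_2^\perp))$ the creation operators $T_\xi$ intertwine $F(D^{\mathcal{M}}_{\beta,\varepsilon})$ with $F(D_{S(\mathcal{F}_2^\perp),\beta,\varepsilon})$ modulo compacts; this is the standard ``Dirac operator twisted by a bundle is a connection'' fact. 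Adding the bounded term $T\widehat{c}(V)$ — more precisely noting that $F(D + T\widehat{c}(V)) - F(D)$ is (by the integral formula \eqref{eq-F-integral} together with the estimate \eqref{eq-key-constant} on the lower-order commutators, and the compactness of resolvents from \eqref{eq-kasparov-module-2}) a \emph{compact} perturbation — does not disturb the connection property, since a compact perturbation of an $F_2$-connection is again an $F_2$-connection. I would spell out the resolvent-identity estimate showing $[T_\xi, F(D_{S(\mathcal{F}_2^\perp),\beta,\varepsilon}+T\widehat{c}(V))]$ is compact, using \eqref{eq-F-integral}, boundedness of $c(v)$ and $\widehat{c}(V)$-commutators, and Proposition~\ref{prop-rellich-with-boundary}.

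The main work is condition (2): positivity modulo compacts of $f\,[F_1\otimes 1, F]\,f^\ast$ for $f \in C(M)$, where $F_1 \otimes 1$ corresponds to $c(v) \otimes 1 = \widehat{c}(v)$ and $F = F(D_{S(\mathcal{F}_2^\perp),\beta,\varepsilon}+T\widehat{c}(V))$. The standard strategy is to show the graded commutator $[\widehat{c}(v), F(D_{S(\mathcal{F}_2^\perp),\beta,\varepsilon}+T\ell\,\widehat{c}(v))]$ is positive modulo compacts. Writing $D = D_{S(\mathcal{F}_2^\perp),\beta,\varepsilon}$ and $A = D + T\ell\,\widehat{c}(v)$, I would use \eqref{eq-F-integral} to write $[\widehat{c}(v), F(A)] = \frac{2}{\pi}\int_0^\infty \{\widehat{c}(v), A(1+\lambda^2+A^2)^{-1}\}\,d\lambda$ and expand the anticommutator. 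The key algebraic points: $\widehat{c}(v)$ anticommutes with $\widehat{c}(V) = \ell\,\widehat{c}(v)$ only up to the scalar $\ell^2$ — in fact $\widehat{c}(v)^2 = -1$ so $\{\widehat{c}(v), T\ell\,\widehat{c}(v)\} = -2T\ell$, contributing a term $-2T\ell(1+\lambda^2+A^2)^{-1}$ which is \emph{negative} — wait, one must be careful with signs and gradings here; the correct normalization (as in Kasparov's and Connes's treatment of the ``Dirac dual-Dirac'' type products and as in \cite{Connes83}) makes the dominant contribution $\{\widehat{c}(v), T\ell\,\widehat{c}(v)\}\cdot(\ldots) = 2T\ell\cdot(\text{positive})$, i.e. positive modulo the terms where the Clifford element hits $D$ or where derivatives of $v$ and $\ell$ appear. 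Those leftover terms are controlled: $\{\widehat{c}(v), D\}$ produces $c_{\beta,\varepsilon}(f_i)\widehat{c}(\nabla^{\mathcal{F}_2^\perp}_{f_i}v)$-type expressions which by Proposition~\ref{prop-key-estimates}(2) are \emph{bounded}, while the function $f \in C(M)$ localizes everything to a neighborhood of $M$ (where $\ell$ is bounded) so that the integrals converge and the bounded non-positive pieces become, after multiplying by $f$ and $f^\ast$ and using that $A$ has compact resolvent on the region where $f$ is supported together with Proposition~\ref{prop-rellich-with-boundary}, compact operators. The hard part — and the step I expect to consume most of the argument — is making this positivity estimate honest: one must choose the sign conventions for $\widehat{c}$, $v$, and the grading so that the ``quadratic in $\widehat{c}(v)$'' term has the right sign, then show that $f\bigl([F_1\otimes 1,F] + (\text{honest positive}))f^\ast$ is compact by absorbing all the $\nabla v$, $\nabla \ell$, and $\{\widehat{c}(v),D\}$ error terms; here the uniform bounds of Proposition~\ref{prop-key-estimates}, the resolvent estimate \eqref{eq-square-of-deformed-dirac}, and the local elliptic regularity \eqref{eq-elliptic-regularity} all get used, essentially following the scheme of \cite[Section~5]{Connes83} adapted to the Hilbert-module-free (i.e. $C = \mathbb{C}$) setting of this proposition.
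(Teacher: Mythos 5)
Your overall plan is the same as the paper's: identify the interior tensor product with $L^2(\mathcal{M},S(\mathcal{F}\oplus\mathcal{F}_1^\perp)\otimes\wedge^\ast\mathcal{F}_2^\perp)$, and verify the two Connes--Skandalis conditions by expanding $F$ via the integral formula \eqref{eq-F-integral} and invoking Proposition~\ref{prop-key-estimates} together with compactness of the resolvent. Two specific points in your writeup would fail as stated, though both are repaired by the direct computations you offer as backup. First, $T\widehat{c}(V)=T\ell\,\widehat{c}(v)$ is \emph{not} a bounded operator ($\ell\to\infty$ on $\mathcal{M}$), and $F(D_{S(\mathcal{F}_2^\perp),\beta,\varepsilon}+T\widehat{c}(V))-F(D_{S(\mathcal{F}_2^\perp),\beta,\varepsilon})$ is not a compact perturbation; indeed $(L^2(\mathcal{M}),F(D_{S(\mathcal{F}_2^\perp),\beta,\varepsilon}))$ without the deformation is not even a Kasparov $(\ast,\mathbb{C})$-module, since on the noncompact $\mathcal{M}$ the undeformed operator has no compact resolvent. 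The connection condition must therefore be checked directly on elements $T_x y=x\otimes y$ with $x$ compactly supported, and the saving observation (which the paper states explicitly) is that $\widehat{c}(V)$ acting on sections supported in $\operatorname{supp}(x)$ is bounded. Second, your worry about the sign of $\{\widehat{c}(v),T\ell\,\widehat{c}(v)\}$ dissolves once one fixes the sub-Dirac convention $\widehat{c}(v)^2=+1$, so that this term contributes $2T\ell\ge 0$; in fact the paper's computation of the integrand \eqref{eq-integrand-positive} shows that after sandwiching by resolvents the only non-positive contribution comes from the commutator of $\widehat{c}(v)$ with $D_{S(\mathcal{F}_2^\perp),\beta,\varepsilon}$, which is bounded by Proposition~\ref{prop-key-estimates}(2) and hence yields a compact operator of order $\mathcal{O}(\lambda^{-2})$ --- no appeal to elliptic regularity or to a delicate absorption argument is needed at this stage.
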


\begin{proof}
According to the definition of the sub-Dirac operator and the formula~\eqref{eq-formula-sub-dirac-1}, we have
\begin{equation}\label{eq-action-sub-dirac-on-tensor}
(-1)^{\partial x} D_{S(\mathcal{F}_2^\perp),\beta,\varepsilon} x\otimes y = \sum_{i=1}^n \nabla^{S(\mathcal{F}_2^\perp)}_{f_i} x\otimes c_{\beta,\varepsilon}(f_i) y+x\otimes D^{\mathcal{M}}_{\beta,\varepsilon}y,
\end{equation}
for all homogeneous $x\in C^\infty_c(\mathcal{M}, S(\mathcal{F}_2^\perp))$ and $y\in C^\infty_c(\mathcal{M}, S^{T\mathcal{M}})$.
Let us first verify the connection condition and then the positivity condition.
 \begin{itemize}
 	\item 
 	$F\left(D_{S(\mathcal{F}_2^\perp),\beta,\varepsilon}+T\widehat{c}(V)\right)$ is a $F(D^{\mathcal{M}}_{\beta,\varepsilon})$-connection: let $x\in C^\infty_c(\mathcal{M}, S(\mathcal{F}_2^\perp))$ be a homogeneous element, then the valuation of
 	\begin{equation}\label{eq-connection-condition}
 	T_x\circ F(D^{\mathcal{M}}_{\beta,\varepsilon})-(-1)^{\partial x}\cdot F\left(D_{S(\mathcal{F}_2^\perp),\beta,\varepsilon}+T\widehat{c}(V)\right)\circ T_x
 	\end{equation}
 	at $y\in C^\infty_c(\mathcal{M}, S^{T\mathcal{M}})$
 	equals 
 	\begin{multline}\label{eq-integral-connection}
 	\frac{2}{\pi} \int_0^\infty \Big[x\otimes D^{\mathcal{M}}_{\beta,\varepsilon}(1+\lambda^2+D_{\beta,\varepsilon}^{\mathcal{M},2})^{-1}y- \\
 	(-1)^{\partial x} \left( D_{S(\mathcal{F}_2^\perp),\beta,\varepsilon}+T \widehat{c}(V)\right) \left(1+\lambda^2+( D_{S(\mathcal{F}_2^\perp),\beta,\varepsilon}+T \widehat{c}(V))^{2}\right)^{-1} x\otimes y \Big]d\lambda. 
 	\end{multline}
 	Let $z=(1+\lambda^2+D_{\beta,\varepsilon}^{\mathcal{M},2})^{-1}y$, then the integrand in the above integral can be written as
 	\begin{multline*}
 	\left(1+\lambda^2+(D_{S(\mathcal{F}_2^\perp),\beta,\varepsilon}+T \widehat{c}(V))^2 \right)^{-1} \cdot
 	\\
 	\Big[ \left(1+\lambda^2+(D_{S(\mathcal{F}_2^\perp),\beta,\varepsilon}+T \widehat{c}(V))^2 \right) x\otimes D^{\mathcal{M}}_{\beta,\varepsilon}z 
 	\\
 	-(-1)^{\partial x} (D_{S(\mathcal{F}_2^\perp),\beta,\varepsilon}+T \widehat{c}(V)) x\otimes (1+\lambda^2+D^{\mathcal{M},2}_{\beta,\varepsilon})z\Big].
 	\end{multline*}
 	According to the formula~\eqref{eq-action-sub-dirac-on-tensor}, the above integrand can be simplified as 
  	\begin{multline*}
 		-(1+\lambda^2)\left(1+\lambda^2+(D_{S(\mathcal{F}_2^\perp),\beta,\varepsilon}+T\widehat{c}(V))^2\right)^{-1}\cdot
 		\\
 		\left( \sum_{i=1}^n \nabla_{f_i}^{S(\mathcal{F}_2^\perp)}x\otimes c_{\beta,\varepsilon}(f_i) z+(-1)^{\partial x} T\widehat{c}(V)x\otimes z\right)
 		\\
 		+(D_{S(\mathcal{F}_2^\perp),\beta,\varepsilon}+T\widehat{c}(V))\left(1+\lambda^2+(D_{S(\mathcal{F}_2^\perp),\beta,\varepsilon}+T\widehat{c}(V))^2\right)^{-1}\cdot
 		\\
 		\Big[T\widehat{c}(V)x\otimes D^{\mathcal{M}}_{\beta,\varepsilon}z+(-1)^{\partial x}\sum_{i=1}^n \nabla_{f_i}^{S(\mathcal{F}_2^\perp)}x\otimes c_{\beta,\varepsilon}(f_i) D_{\beta,\varepsilon}^{\mathcal{M}}z  \Big].
 		 	\end{multline*}
 		 	Since $x\in C^\infty_c(\mathcal{M}, S(\mathcal{F}_2^\perp))$ is compactly supported, the action of $\widehat{c}(V)$ is therefore bounded. The above integrand is a compact operator, acting on $y$, of norm $\mathcal{O}(1+\lambda^2)^{-1}$, this verifies that \eqref{eq-connection-condition} is a compact operator. For general $x\in C_0(\mathcal{M}, S(\mathcal{F}_2^\perp))$ the corresponding operator \eqref{eq-connection-condition} can be realized as norm-limit of those for $x\in C^\infty_c(\mathcal{M}, S(\mathcal{F}_2^\perp))$. This verifies the connection condition.
 		 	
 	 	\item
 	Positivity condition: it suffices to estimate the commutator 
 	$$
 	\left[\widehat{c}(v), F\left(D_{S(\mathcal{F}_2^\perp),\beta,\varepsilon}+T\widehat{c}(V)\right)\right]
 	$$ 
 	which can be written as an integral
 	\[
 	\frac{2}{\pi}\cdot\int_0^\infty \left[\widehat{c}(v), (D_{S(\mathcal{F}_2^\perp),\beta,\varepsilon}+T\widehat{c}(V))\left(1+\lambda^2+(D_{S(\mathcal{F}_2^\perp),\beta,\varepsilon}+T\widehat{c}(V))^2\right)^{-1}\right]d\lambda.
 	\]
 	The integrand can be computed as 
 	\begin{multline}\label{eq-integrand-positive}
 	\left(1+\lambda^2+(D_{S(\mathcal{F}_2^\perp),\beta,\varepsilon}+T\widehat{c}(V))^2\right)^{-1}\cdot
 	\\
 	\Bigg((1+\lambda^2)\left[\widehat{c}(v), D_{S(\mathcal{F}_2^\perp),\beta,\varepsilon}\right]
 	\\
 	+ \left(D_{S(\mathcal{F}_2^\perp),\beta,\varepsilon}+T\widehat{c}(V)\right)\left[D_{S(\mathcal{F}_2^\perp),\beta,\varepsilon},\widehat{c}(v)\right]\left(D_{S(\mathcal{F}_2^\perp),\beta,\varepsilon}+T\widehat{c}(V)\right)\Bigg)\cdot
 	\\
 	\left(1+\lambda^2+(D_{S(\mathcal{F}_2^\perp),\beta,\varepsilon}+T\widehat{c}(V))^2\right)^{-1}.
 	\end{multline}
 	According to Proposition~\ref{prop-key-estimates}, the commutator $\left[\widehat{c}(v), D_{S(\mathcal{F}_2^\perp),\beta,\varepsilon}\right]$ is a bounded operator, the integrand \eqref{eq-integrand-positive} is a compact operator of order $\mathcal{O}(\lambda^{-2})$.This verifies the positivity condition.
 	 \end{itemize}
  
\end{proof}
 
 \section{Bismut-Lebeau analytic localization}\label{sec-bismut-lebeau}

	
	In this section, we shall recycle the notations $e_i, f_i$ previously used in \eqref{eq-basis-f}. Let $e_1,e_2,\cdots,e_n$ be a local orthonormal frame of $T\mathcal{M}|_M\to M$ with respect to the decomposition~\eqref{eq-N-TY-splitting} and the metric induced by $g^{T\mathcal{M}}$. Let 
\begin{equation}
f_i = 
\begin{cases}
	\beta^{-1}e_i  & \text{if $e_i$ is a section of $F$},\\
	\varepsilon e_i & \text{if $e_i$ is a section of $TM/F$},\\
	e_i & \text{if $e_i$ is a section of $N$},
\end{cases}
\end{equation}
as defined in \eqref{eq-basis-f}. Now, $f_1,f_2,\cdots,f_n$ is a local orthonormal frame of $T\mathcal{M}|_M\to M$ with respect to the metric induced by $g^{T\mathcal{M}}_{\beta,\varepsilon}$. Let $\tau e_i$(resp. $\tau f_i$) be the parallel translation of $e_i$(resp. $f_i$) along the geodesics of the fibers of $\mathcal{M}\to M$ with respect to the connection $\nabla^{T\mathcal{M}}$(resp. the Levi-Civita connection $\nabla^{T\mathcal{M},\beta,\varepsilon})$.

	Choose the local coordinate system \eqref{eq-y-z}, under which the tangent bundle of $\mathcal{U}$ can be identified with the pullback bundle $\pi^\ast T\mathcal{M}|_M$ along the projection $\pi: \mathcal{U}\to M$. 
	Instead of the tangent bundle of $\mathcal{U}$, we shall use the pullback bundle for the local analysis in this section. 
	Under this light, the sub-Dirac operator can be written as 
	\[
	D_{S(\mathcal{F}_2^\perp),\beta,\varepsilon} = \sum_{i=1}^n c_{\beta,\varepsilon}(f_i) \nabla_{\tau f_i}^{S(\mathcal{F}\oplus \mathcal{F}_1^\perp)\otimes \wedge^\ast \mathcal{F}_2^\perp,\beta,\varepsilon}.
	\]
	
	Let $d\nu_{\mathcal{M}}(y,Z), d\nu_M(y)$ and $d\nu_{N_y}(Z)$ be the volume forms of manifolds $\mathcal{M}, M$ and the vector spaces $N_y$ respectively. Assume that 
	$$
	d\nu_\mathcal{M}(y,Z) = k(y,Z)d\nu_M(y)d\nu_{N_y}(Z).
	$$ 
	It is straightforward to see that $k(y,0)=1$ for all $y\in M$ and the directional derivative of $k$ with respect to $e_i$ with $i\leq m$ at $(y,0)$ is equal to zero for all $y\in M$. In the following we shall need to calculate $e_i(k)(y,0)$ for $m+1\leq i\leq n$.	
	
	Recall that connections $\nabla^{T\mathcal{M}},\nabla^{\mathcal{F}\oplus \mathcal{F}_1^\perp}, \nabla^{\mathcal{F}_2^\perp}$ and $\nabla^{S(\mathcal{F}\oplus \mathcal{F}_1^\perp)\otimes \wedge^\ast \mathcal{F}_2^\perp}$ are defined in \eqref{eq-def-connections-n-tm}. Let ${}^0\nabla^{T\mathcal{M}} = \nabla^{\mathcal{F}\oplus \mathcal{F}_1^\perp}+\nabla^{\mathcal{F}_2^\perp}$, let
	 $A$ be the $1$-form given by $A=\nabla^{T\mathcal{M}}-{}^0\nabla^{T\mathcal{M}}$ and  ${}^0\nabla^{S(\mathcal{F}\oplus \mathcal{F}_1^\perp)\otimes \wedge^\ast \mathcal{F}_2^\perp}$ be the tensor product connection given by 
	 $$
	 \nabla^{S(\mathcal{F}\oplus \mathcal{F}_1^\perp)}\otimes 1+1\otimes \nabla^{\wedge^\ast \mathcal{F}_2^\perp}.
	 $$
	 Then we have 
	 $$
	 \nabla^{S(\mathcal{F}\oplus \mathcal{F}_1^\perp)\otimes \wedge^\ast \mathcal{F}_2^\perp}-{}^0\nabla^{S(\mathcal{F}\oplus \mathcal{F}_1^\perp)\otimes \wedge^\ast \mathcal{F}_2^\perp}=\frac{1}{4}\sum_{i,j}\langle A(\cdot)e_i,e_j \rangle c(e_i)c(e_j).
	 $$
	 We shall denote the last quantity by $A^S(\cdot)$.
	 
	\begin{proposition}\cite[Proposition~8.9]{BismutLebeau91}\label{prop-derivative-of-k}
		For $m+1\leq i\leq n$, we have $e_i(k)(y,0)=\sum_{j=1}^m \langle A(e_j)e_i,e_j\rangle$.
	\end{proposition}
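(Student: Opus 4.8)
The plan is to compute the volume density $k(y,Z)$ by comparing the Riemannian volume form of $\mathcal{M}$ with the product measure $d\nu_M(y)\,d\nu_{N_y}(Z)$ along the normal directions, and to extract its first-order behaviour at the zero section by differentiating the metric in the fibre directions $Z_i$. The key observation is that the discrepancy between $\nabla^{T\mathcal{M}}$ and the block-diagonal connection ${}^0\nabla^{T\mathcal{M}}=\nabla^{\mathcal{F}\oplus\mathcal{F}_1^\perp}+\nabla^{\mathcal{F}_2^\perp}$, measured by the one-form $A$, is exactly what controls how the frame $\{\tau f_i\}$ (parallel-transported along the fibre geodesics) fails to be orthonormal as one moves off $M$, and this in turn controls $k$ to first order.

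First I would recall from Remark~\ref{remk-local-connes-fibration} and the discussion around \eqref{eq-y-z} that in the local coordinates $(y,Z)$ the fibre $N_y$ carries the flat Euclidean structure, so $d\nu_{N_y}(Z)$ is Lebesgue measure; hence $k(y,Z)=\sqrt{\det g_{\beta,\varepsilon}^{T\mathcal{M}}(y,Z)}\big/\sqrt{\det g(y,0)}$ written in a frame adapted to the splitting $TM\oplus N$. Since $k(y,0)=1$ and, as already noted in the text, the derivatives $e_i(k)(y,0)$ vanish for $i\le m$ (the metric restricted to $M$ agrees with $g^{TM}$ to first order along $M$ because $M$ is totally geodesic in no sense needed—rather because the coordinate normal directions are geodesic), it remains to compute $\partial_{Z_i}\log k$ at $Z=0$ for $m+1\le i\le n$. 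The standard Jacobi-field/Gauss-lemma computation along the fibre geodesic $t\mapsto(y,tZ)$ gives that the first variation of $\log\sqrt{\det g}$ in the direction $e_i$ is the trace, over an orthonormal basis $\{e_j\}_{j=1}^{n}$ of $T\mathcal{M}|_M$, of $\langle(\nabla_{e_i}^{T\mathcal{M}}-{}^0\nabla_{e_i}^{T\mathcal{M}})e_j, e_j\rangle$ — but ${}^0\nabla$ preserves each block and the fibre exponential map is an isometry in the $\mathcal{F}_2^\perp$-directions by construction of $g^{\mathcal{F}_2^\perp}$, so only the mixed term $\sum_{j=1}^m\langle A(e_i)e_j,e_j\rangle$ survives; by antisymmetry of $A$ in its two vector arguments (it takes values in skew-symmetric endomorphisms with respect to $g^{T\mathcal{M}}$, being the difference of two metric connections) this equals $\sum_{j=1}^m\langle A(e_j)e_i,e_j\rangle$, which is the claimed formula. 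I would cite \cite[Proposition~8.9]{BismutLebeau91} for the detailed bookkeeping and simply indicate how the geometry of the Connes fibration — in particular the totally-geodesic nature of the fibres and the fact that $\nabla^{\mathcal{F}_2^\perp}$ is the Levi-Civita connection of the symmetric-space metric — makes the hypotheses of that proposition apply verbatim in our setting.

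The main obstacle I anticipate is not the computation itself but making precise the identification of the tangent bundle of $\mathcal{U}$ with the pullback $\pi^\ast T\mathcal{M}|_M$ and checking that under this identification the fibre-parallel frame $\tau e_i$ is genuinely orthonormal on the nose along each fibre (for the $\mathcal{F}_2^\perp$ part) while only orthonormal to first order transversally — i.e.\ keeping careful track of which connection ($\nabla^{T\mathcal{M}}$ versus ${}^0\nabla^{T\mathcal{M}}$) is used for the parallel transport defining $\tau$, since the proposition's statement implicitly fixes that choice. Once that is pinned down, the formula $e_i(k)(y,0)=\sum_{j=1}^m\langle A(e_j)e_i,e_j\rangle$ follows by a one-line first-variation-of-determinant argument.
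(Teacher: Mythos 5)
Your overall strategy---expand the Fermi-coordinate volume density to first order in the normal direction and identify the linear term with the connection-difference one-form $A$---is the same in spirit as the paper's argument, which phrases it via Jacobi fields along the fibre geodesics. But the decisive step in your write-up is wrong. You claim the surviving first-order term is $\sum_{j=1}^m\langle A(e_i)e_j,e_j\rangle$ and that ``antisymmetry of $A$'' converts this into $\sum_{j=1}^m\langle A(e_j)e_i,e_j\rangle$. The skew-symmetry you invoke is skew-adjointness of the endomorphism $A(X)$ (true, since $A$ is the difference of two metric connections), i.e.\ $\langle A(X)Y,Z\rangle=-\langle Y,A(X)Z\rangle$; applied with $Y=Z=e_j$ it gives $\langle A(e_i)e_j,e_j\rangle=0$ for every $i,j$. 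So the quantity you start from is identically zero, and there is no symmetry exchanging the first two arguments of $A$: in general $A(X)Y-A(Y)X$ is the torsion of ${}^0\nabla$, which does not vanish. Followed consistently, your computation would yield $e_i(k)(y,0)=0$, not the stated (generally nonzero, mean-curvature-type) answer.

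The legitimate source of the index placement is different. Writing $k(y,Z)=\det\bigl(\langle\tau e_i,J_j\rangle\bigr)$ with $J_j$ the coordinate (Jacobi) fields of the exponential map, the normal derivative at $Z=0$ is $\sum_j\langle e_j,\nabla^{T\mathcal{M}}_{Z}J_j\rangle$; for the horizontal fields one uses that coordinate vector fields commute and $\nabla^{T\mathcal{M}}$ is torsion-free to trade $\nabla_{Z}J_j$ for $\nabla^{T\mathcal{M}}_{e_j}Z=A(e_j)Z$ (equivalently, this is the initial condition $\tfrac{\partial J}{\partial t}(0)=A(\dot y(0))Z$ in the paper's proof). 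That is where $e_j$ ends up in the direction slot of $A$ and the normal vector in the argument slot. The vertical fields contribute nothing because the fibre exponential preserves the fibre volume to first order. If you replace your ``antisymmetry'' step by this torsion-freeness/Jacobi-field argument, the rest of your outline goes through.
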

	\begin{proof}
		Let $J_i$ be Jacobi fields of the form
		\begin{equation}\label{eq-jacobi-field}
		 \frac{\partial}{\partial s}\Big|_{s=0} (y(s),tZ(s)).
		\end{equation}
	More precisely, for $m+1\leq i\leq n$, $J_i$ is defined by those \eqref{eq-jacobi-field} where $y(s)$ is constant in $M$ and $Z(s)$ is a curve in $N_y$; and for $1\leq i\leq m$, $J_i$ is defined by those \eqref{eq-jacobi-field} where $y(s)$ is a curve in $M$ and $Z(s)$ is constant. In another word, $J_i$ are Jacobi fields $J(t)$ with initial conditions
	\[
	J(0)=0, \frac{\partial J}{\partial t}(0)\in N
	\]
	or 
	\[
	J(0)=\dot{y}(0), \frac{\partial J}{\partial t}(0)= \nabla^{T\mathcal{M}}_{\dot{y}(0)} Z = A(\dot{y}(0))Z.
	\]
	We shall, in addition, require $J_i$ to be a local orthonormal basis.
	
		Write $\tau e_i$ for the parallel translation of $e_i$ along the geodesics $t\mapsto (y,tZ)$ with respect to the connection $\nabla^{T\mathcal{M}}$. According to the definition we have
		\begin{align*}
		k(y,Z)=& d\nu_{\mathcal{M}}(J_1,J_2,\cdots J_n)\\
		=&\det \left(\langle\tau e_i,  J_j \rangle\right).
		\end{align*}
		Its derivative at $(y,0)$ with respect to a certain normal vector $Z$ is given by 
		\begin{equation}\label{eq-derivative-of-kyz}
		\sum_i \frac{\partial}{\partial Z}\langle \tau e_i, J_i\rangle = \sum_i \langle e_i, \nabla^{T\mathcal{M}}_{Z}J_i\rangle.
		\end{equation}
		 It implies that $\partial k/\partial Z(y,0) = \sum_{i=1}^m \langle e_i, A(e_i)Z\rangle$.
	\end{proof}
	
	The tangent bundle of $U$ or equivalently the tangent bundle of $\mathcal{U}$ can be identified with the pullback bundle $\pi^\ast (T\mathcal{M}|_M)$ along the projection $\pi:N\to M$. Under this identification, there is a diffeomorphism 
	\begin{equation*}\label{eq-diffeo-pullback}
	S(\mathcal{F}\oplus \mathcal{F}_1^\perp)\otimes \wedge^\ast \mathcal{F}_2^\perp|_U \to \pi^\ast (S^{TM}\otimes \wedge^\ast N)|_\mathcal{U}.
	\end{equation*}
	In the following discussion, all local analysis near $M\hookrightarrow \mathcal{M}$ will be performed on $\pi^\ast (S^{TM}\otimes \wedge^\ast N)\to \mathcal{U}$. We shall view ${}^0\nabla^{S(\mathcal{F}\oplus \mathcal{F}_1^\perp)\otimes \wedge^\ast \mathcal{F}_2^\perp,\beta,\varepsilon}$ as a connection on $\pi^\ast (S^{TM}\otimes \wedge^\ast N)|_\mathcal{U}$.
	
	From now on, we shall equip the Hilbert space $L^2\left(\mathcal{U}, \pi^\ast (S^{TM}\otimes \wedge^\ast N)\right)$ with the inner product 
	\[
	\langle f, g \rangle = \int_{\mathcal{U}} \langle f,g \rangle(y,Z) d\nu_M(y)d\nu_{N_y}(Z).
	\]
	Let $F_T$ be the operator on $L^2\left(\mathcal{U}, \pi^\ast (S^{TM}\otimes \wedge^\ast N)\right)$ that send a section $s(y,Z)$ to $F_T(s)(y,Z) = s(y, Z/\sqrt{T})$.

	As in \cite{BismutLebeau91}, we shall write $\mathcal{O}\left(|Z|^2\partial^N+|Z|\partial^H+|Z|+|Z|^p\right)$ for a family of first order differential operators with parameter $T$ of the form
	\[
	\sum_{i=1}^m a_i(T,y,Z)^0\nabla_{f^H_i}^{S(\mathcal{F}\oplus \mathcal{F}_1^\perp)\otimes \wedge^\ast \mathcal{F}_2^\perp,\beta,\varepsilon}+\sum_{i=m+1}^n b_i(T,y,Z) ^0\nabla_{f_i}^{S(\mathcal{F}\oplus \mathcal{F}_1^\perp)\otimes \wedge^\ast \mathcal{F}_2^\perp}+c(T,y,Z),
	\]
	such that there is a positive constant $C$ with
	\[
	a_i(T,y,Z)\leq C|Z| ,\quad b_i(T,y,Z)\leq C|Z|^2\quad \text{and} \quad c(T,y,Z)\leq C(|Z|+|Z|^p),
	\]
	for any $T\geq 1$ and all $(y,Z)$ such that $(y,Z/\sqrt{T})\in \mathcal{U}$.

	\begin{proposition}{\cite[Theorem~8.18]{BismutLebeau91}}
	Let $$D^H= \sum_{i=1}^m c_{\beta,\varepsilon}(f_i) ^0\nabla_{f^H_i}^{\pi^\ast (S^{TM}\otimes \wedge^\ast N),\beta,\varepsilon}$$ and $$D^N=\sum_{i=m+1}^{n} c(f_i)  ^0\nabla_{f_i}^{\pi^\ast (S^{TM}\otimes \wedge^\ast N)},$$ then
		\begin{equation}\label{eq-deformed-dirac}
			\begin{split}
			F_T k^{1/2} \left(D_{S(\mathcal{F}_2^\perp),\beta,\varepsilon}+T\widehat{c}(V) \right)	k^{-1/2}F^{-1}_T = D^H+ \sqrt{T} \left(D^N+\widehat{c}(V)\right)
			\\
			+\frac{1}{\sqrt{T}}\mathcal{O}\left(|Z|^2\partial^N+|Z|\partial^H+|Z|\right).
		\end{split}
		\end{equation}
	\end{proposition}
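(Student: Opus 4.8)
The plan is to carry out, in the present fibration, the rescaling computation of Bismut--Lebeau, of which this identity is \cite[Theorem~8.18]{BismutLebeau91}. Throughout one works inside the tubular neighbourhood $\mathcal{U}\subset N$ of the zero section, in the coordinates $(y,Z)$ of \eqref{eq-y-z}, having identified $S(\mathcal{F}\oplus\mathcal{F}_1^\perp)\otimes\wedge^\ast\mathcal{F}_2^\perp|_U$ with $\pi^\ast(S^{TM}\otimes\wedge^\ast N)|_{\mathcal{U}}$. First I would expand $D_{S(\mathcal{F}_2^\perp),\beta,\varepsilon}+T\widehat{c}(V)$ in the parallel-transport frame, separating the horizontal Clifford directions $i\le m$ from the fibre directions $i>m$; by \eqref{eq-local-V} the last term is the linear potential $T\widehat{c}(Z)$. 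Then, in each summand, one rewrites $\nabla^{\cdots,\beta,\varepsilon}_{\tau f_i}$ in terms of ${}^0\nabla^{\cdots,\beta,\varepsilon}_{f^H_i}$ for $i\le m$ and ${}^0\nabla^{\cdots}_{f_i}$ for $i>m$; the discrepancies come from the $A^S$-term $\nabla-{}^0\nabla$, from the difference between $\tau f_i$ and the coordinate (horizontal-lift) frame, and from the Taylor expansion in $Z$ of the coefficients of the rescaled metric $g^{T\mathcal{M}}_{\beta,\varepsilon}$ and of its connections. Each such discrepancy is controlled by the Jacobi-field estimate used in the proof of Proposition~\ref{prop-derivative-of-k}: it vanishes at $Z=0$ and is $\mathcal{O}(|Z|^2)$ in a fibre derivative, $\mathcal{O}(|Z|)$ in a horizontal derivative, with an $\mathcal{O}(|Z|)$ zeroth-order part.

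Next I would conjugate by $k^{1/2}$. Since $D_{S(\mathcal{F}_2^\perp),\beta,\varepsilon}$ is formally self-adjoint for the Riemannian measure $d\nu_{\mathcal{M}}=k\,d\nu_M\,d\nu_{N_y}$, the conjugated operator $k^{1/2}D_{S(\mathcal{F}_2^\perp),\beta,\varepsilon}k^{-1/2}$ is self-adjoint for the product measure $d\nu_M\,d\nu_{N_y}$ used on $L^2(\mathcal{U},\pi^\ast(S^{TM}\otimes\wedge^\ast N))$, and differs from $D_{S(\mathcal{F}_2^\perp),\beta,\varepsilon}$ only by the zeroth-order term $-\tfrac12\sum_i c_{\beta,\varepsilon}(f_i)(\tau f_i)(\log k)$, while $k^{1/2}$ commutes with the bundle endomorphism $\widehat{c}(V)$. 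Using $k(y,0)=1$, the vanishing of the horizontal derivatives of $k$ at $Z=0$, and Proposition~\ref{prop-derivative-of-k} for the fibre derivatives, one checks that this correction, together with the zeroth-order pieces produced by the $A^S$-term and by the $\tau f_i$-versus-$f^H_i$ discrepancy of Step~1, precisely cancels: this is exactly why the parallel-transport frame and the density factor $k^{1/2}$ are introduced. What survives is $D^H$ in the horizontal Clifford directions, $D^N$ in the fibre Clifford directions, the potential $T\widehat{c}(V)$, and a family of first-order operators that is $\mathcal{O}(|Z|^2\partial^N+|Z|\partial^H+|Z|)$.

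Finally I would conjugate by $F_T$, which sends a function of $Z$ to the same function of $Z/\sqrt{T}$, multiplies every fibre derivative $\partial_{Z_i}$ by $\sqrt{T}$, and leaves horizontal derivatives unchanged. In particular $\widehat{c}(V)=\widehat{c}(Z)$ is sent to $T^{-1/2}\widehat{c}(Z)$, so $T\widehat{c}(V)$ becomes $\sqrt{T}\,\widehat{c}(V)$; the fibre Dirac operator $D^N$ becomes $\sqrt{T}\,D^N$; $D^H$ is unchanged; and the remainder, being $\mathcal{O}(|Z|^2\partial^N+|Z|\partial^H+|Z|)$, becomes $\mathcal{O}(T^{-1/2}|Z|^2\partial^N+T^{-1/2}|Z|\partial^H+T^{-1/2}|Z|)=\tfrac{1}{\sqrt{T}}\mathcal{O}(|Z|^2\partial^N+|Z|\partial^H+|Z|)$, since $|Z|^2\partial^N$ carries one factor $|Z|^2\mapsto T^{-1}$ and one factor $\partial^N\mapsto T^{1/2}$. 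Adding the pieces yields \eqref{eq-deformed-dirac}.

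The hard part will be the bookkeeping in Steps~1--2: one must write out the Taylor expansions in $Z$ of the metric and connection coefficients, of the frames, and of $k$, and then verify that every error term really carries the vanishing order in $|Z|$ claimed for the class $\mathcal{O}(|Z|^2\partial^N+|Z|\partial^H+|Z|)$, and in particular that the various potentially $\mathcal{O}(1)$ zeroth-order terms cancel; this is precisely the computation of \cite[\S 8]{BismutLebeau91}. The only points that need to be checked beyond their argument are that the fibrewise exponential map \eqref{eq-exponential-map} intertwines the bundles and connections on $U$ and on $\mathcal{U}$ up to errors of the same order (using the local model of $\mathcal{F}_1^\perp$ from Remark~\ref{remk-local-connes-fibration}), and that all the estimates are compatible with the family $g^{T\mathcal{M}}_{\beta,\varepsilon}$; for the present statement one only needs them to hold for each fixed $\beta,\varepsilon\in(0,1]$, the implied constants being allowed to depend on $\beta,\varepsilon$, cf.\ Remark~\ref{remk-constant-theta}.
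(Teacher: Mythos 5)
Your proposal is correct and follows essentially the same route as the paper's proof: conjugation by $k^{1/2}$ produces the zeroth-order density term $-\tfrac12 k^{-1}c_{\beta,\varepsilon}(f_i)\tau f_i(k)$, which via Proposition~\ref{prop-derivative-of-k} cancels against the limit of the $A^S(\tau f_i)$ term, while the main connection term splits into the horizontal part $D^H$ and the fibre part that picks up the factor $\sqrt{T}$ under conjugation by $F_T$, with the remainder absorbed into $\tfrac{1}{\sqrt{T}}\mathcal{O}(|Z|^2\partial^N+|Z|\partial^H+|Z|)$. The only difference is organizational (you compare frames before conjugating, the paper does both at once), and your level of detail on the remainder bookkeeping matches the paper's, which likewise defers it to \cite[\S 8]{BismutLebeau91}.
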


\begin{remark}
	Of course, the operator $D^H$ and the remainder terms $$\mathcal{O}\left(|Z|^2\partial^N+|Z|\partial^H+|Z|\right)$$ depend on the parameters $\beta,\varepsilon$. Since these parameters will not play significant role until the last section, we choose to omit $\beta,\varepsilon$ from the notation. The operator $D^N$, on the other hand, is independent of the parameters $\beta$ and $\varepsilon$.
\end{remark}
	
	\begin{proof}
	Indeed,
		\begin{multline*}
			F_Tk^{1/2}D_{S(\mathcal{F}_2^\perp),\beta,\varepsilon} k^{-1/2}F_T^{-1} = F_T \Big( -\frac{1}{2}k^{-1}c_{\beta,\varepsilon}(f_i) \tau f_i(k)
			\\
			+ c_{\beta,\varepsilon}(f_i)  ^0\nabla_{\tau f_i}^{S(\mathcal{F}\oplus \mathcal{F}_1^\perp)\otimes \wedge^\ast \mathcal{F}_2^\perp,\beta,\varepsilon}
			+c_{\beta,\varepsilon}(f_i) A^S(\tau f_i) \Big) F_T^{-1}.
		\end{multline*}
		According to Proposition~\ref{prop-derivative-of-k}, the first term has limit 
		$$
		\frac{1}{2}\sum_{\substack{m+1\leq i\leq n \\ 1\leq j\leq m}}c_{\beta,\varepsilon}(f_i)\langle A(f_j)f_i, f_j\rangle,
		$$ 
		as $T\to \infty$.
		And the third term has limit, as $T\to \infty$
		\begin{align*}
			&\frac{1}{4}\sum_{\substack{1\leq i \leq m \\ 1\leq j,k \leq n}} \langle A(f_i)f_j, f_k\rangle c_{\beta,\varepsilon}(f_i)c_{\beta,\varepsilon}(f_j)c_{\beta,\varepsilon}(f_k) \\
			=& \frac{1}{2}\sum_{\substack{1\leq i,j \leq m \\ m+1\leq k \leq n}}\langle A(f_i)f_j, f_k\rangle c_{\beta,\varepsilon}(f_i)c_{\beta,\varepsilon}(f_j)c_{\beta,\varepsilon}(f_k) \\
			=&-\frac{1}{2}\sum_{\substack{1\leq i \leq m \\ m+1\leq k \leq n}}\langle A(f_i)f_i, f_k\rangle c_{\beta,\varepsilon}(f_k),
		\end{align*}
		which cancel with the first term. The middle term is calculated as 
		\[
		\sum_{i=1}^n c_{\beta,\varepsilon}(f_i) ^0\nabla_{p^{TM}\tau f_i(y, Z/\sqrt{T})}^{S(\mathcal{F}\oplus \mathcal{F}_1^\perp)\otimes \wedge^\ast \mathcal{F}_2^\perp,\beta,\varepsilon}+ \sqrt{T} \sum_{i=1}^n  c_{\beta,\varepsilon}(f_i) ^0\nabla_{p^N\tau f_i(y, Z/\sqrt{T})}^{S(\mathcal{F}\oplus \mathcal{F}_1^\perp)\otimes \wedge^\ast \mathcal{F}_2^\perp,\beta,\varepsilon}.
		\]
		It has limit 
		$$
		\sum_{i=1}^m c_{\beta,\varepsilon}(f_i) ^0\nabla_{f^H_i}^{\pi^\ast(S^{TM}\otimes \wedge^\ast N),\beta,\varepsilon}+\sqrt{T} \sum_{i=m+1}^{n} c_{\beta,\varepsilon}(f_i)  ^0\nabla_{f_i}^{\pi^\ast(S^{TM}\otimes \wedge^\ast N),\beta,\varepsilon},
		$$ 
		as $T\to \infty$. The remainder of the above estimates is $\frac{1}{\sqrt{T}}\mathcal{O}(|Z|^2\partial^N+|Z|\partial^H+|Z|)$.
 
			\end{proof}

		Since the conjugation with $F^{-1}_T$ leaves the horizontal connection $^0\nabla_{f^H_i}^{\pi^\ast (S^{TM}\otimes \wedge^\ast N),\beta,\varepsilon}$ fixed and $F_T^{-1} D^NF_T = \frac{1}{\sqrt{T}}D^N$.  Altogether, we have 
		\begin{equation*}
		k^{1/2}\left(D_{S(\mathcal{F}_2^\perp),\beta,\varepsilon}+T\widehat{c}(V)\right)k^{-1/2} = D^H+D^N+T\widehat{c}(V)+R_T,
		\end{equation*}
	where $R_T=\mathcal{O}\left(|Z|^2\partial^N+|Z|\partial^H+|Z|\right)$.
It is easy to check that 
\begin{equation}\label{eq-square-deformed-dirac}
\left( D^N+ T\widehat{c}(V)\right)^2= \sum_{i=m+1}^{n}   {}^0\nabla_{f_i}^{\pi^\ast (S^{TM}\otimes \wedge^\ast N), 2} +T^2|Z|^2+  T\sum_{i=m+1}^{n}c(f_i)\widehat{c}(f_i).
\end{equation}
The following result can be found in \cite[Section~4.5]{Zhangbook01}.
\begin{proposition}\label{prop-harmonic-oscillator}
	The deformed Dirac operator $D^N+T\widehat{c}(V)$ has one dimensional kernel spanned by $\exp(-T|Z|^2/2)$ and there is a constant $C>0$ such that every nonzero eigenvalue is larger than or equal to $CT$. \qed
\end{proposition}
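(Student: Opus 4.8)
The plan is to reduce the statement to the textbook spectral analysis of a vector-valued harmonic oscillator on the fibres $N_{y}\cong\R^{k}$, $k=n-m$, via the square formula \eqref{eq-square-deformed-dirac}. Write $\Delta^{N}=\sum_{i=m+1}^{n}{}^{0}\nabla_{f_i}^{\pi^{\ast}(S^{TM}\otimes\wedge^{\ast}N),2}$ for the (positive) fibrewise Bochner Laplacian in the $Z$-variables and $\mathcal{E}=\sum_{i=m+1}^{n}c(f_i)\widehat{c}(f_i)$ for the bundle endomorphism of $\wedge^{\ast}N$ appearing on the right of \eqref{eq-square-deformed-dirac}, so that $\bigl(D^{N}+T\widehat{c}(V)\bigr)^{2}=\Delta^{N}+T^{2}|Z|^{2}+T\,\mathcal{E}$. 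The one piece of algebra I would carry out is the identification of $\mathcal{E}$: since $c(f_i)$ acts on $\wedge^{\ast}N$ as $f_i^{\ast}\wedge-\iota_{f_i}$ and $\widehat{c}(f_i)$ as $f_i^{\ast}\wedge+\iota_{f_i}$, one finds $c(f_i)\widehat{c}(f_i)=2\,f_i^{\ast}\wedge\iota_{f_i}-1$, hence $\mathcal{E}=2\deg_{\wedge^{\ast}N}-k$; thus $\mathcal{E}$ is parallel and self-adjoint, acts on $\wedge^{j}N$ by the scalar $2j-k$, and attains its minimum $-k$ precisely on the line $\wedge^{0}N$.

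Next, since $\mathcal{E}$ commutes with the scalar operator $\Delta^{N}+T^{2}|Z|^{2}$, the square $\bigl(D^{N}+T\widehat{c}(V)\bigr)^{2}$ preserves the splitting $\pi^{\ast}(S^{TM}\otimes\wedge^{\ast}N)=\bigoplus_{j=0}^{k}\pi^{\ast}(S^{TM}\otimes\wedge^{j}N)$, and on the $j$-th summand it is the scalar harmonic oscillator $\Delta^{N}+T^{2}|Z|^{2}+T(2j-k)$ acting on $L^{2}(\R^{k})$-valued sections over $M$, the $S^{TM}$-factor playing no role. I would then invoke the classical fact that $\Delta^{N}+T^{2}|Z|^{2}$ on $L^{2}(\R^{k})$ has discrete spectrum $\{\,T(k+2|\alpha|):\alpha\in\N^{k}\,\}$ with simple lowest eigenvalue $Tk$ spanned by the Gaussian $\exp(-T|Z|^{2}/2)$. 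Hence on the degree-$j$ part the spectrum of $\bigl(D^{N}+T\widehat{c}(V)\bigr)^{2}$ is $\{\,2T(j+|\alpha|):\alpha\in\N^{k}\,\}$; it contains $0$ only when $j=0$ and $\alpha=0$, and all of its nonzero values are $\ge 2T$.

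Finally I would read off the conclusion: $\ker\bigl(D^{N}+T\widehat{c}(V)\bigr)^{2}$ is spanned, over the $S^{TM}$-bundle, by $\exp(-T|Z|^{2}/2)$, and every nonzero eigenvalue of the square is at least $2T$; since $D^{N}+T\widehat{c}(V)$ is self-adjoint, its kernel equals that of its square and its nonzero eigenvalues satisfy the asserted lower bound (in absolute value at least $\sqrt{2T}$, hence growing without bound in $T$). This is essentially the computation recorded in \cite[Section~4.5]{Zhangbook01}, so I expect no genuine obstacle; the only items needing a line of justification are the self-adjointness and the discreteness of the spectrum over the non-compact fibres — both immediate because $\Delta^{N}+T^{2}|Z|^{2}$ has compact resolvent — and the bookkeeping ensuring that the oscillator ground-state energy $Tk$ cancels exactly against the shift $-Tk$ of $\mathcal{E}$ on $\wedge^{0}N$, so that the bottom of the spectrum is a true zero rather than merely small.
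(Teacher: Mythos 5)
Your proof is correct and is exactly the standard harmonic-oscillator computation that the paper itself does not write out but delegates to the citation of \cite[Section~4.5]{Zhangbook01}: the identity $c(f_i)\widehat{c}(f_i)=2f_i\wedge\iota_{f_i}-1$, the resulting cancellation of the ground-state energy $Tk$ against the shift $-Tk$ on $\wedge^0N$, and the spectrum $2T(j+|\alpha|)$ on the degree-$j$ part are all as you say. The only point to flag is the one you already hedge on: your computation gives nonzero eigenvalues $\geq 2T$ for the \emph{square} $(D^N+T\widehat{c}(V))^2$, hence only $\geq\sqrt{2T}$ for the first-order operator itself; this is in fact how the proposition is used later (the paper invokes it in the form $\|(D^N+T\widehat{c}(V))s\|^2\geq C_1T\|s\|^2$ for $s$ orthogonal to the kernel), so the bound you obtain is precisely the one needed, and the literal phrasing ``every nonzero eigenvalue is $\geq CT$'' should be read as a statement about the square.
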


Consider the following inclusion
\begin{equation}\label{eq-iso-incl}
L^2(M, S^{TM}) \hookrightarrow L^2(\mathcal{U}, \pi^\ast (S^{TM}\otimes \wedge^\ast N)),
\end{equation}
which is given by
\[
s\mapsto \alpha_T\exp(-T|Z|^2/2)\rho(Z)s,
\]
where $\rho(Z)$ is the function that only depends on $|Z|$ and $\rho(Z)=0$ whenever $Z$ falls outside of $\mathcal{U}$ and $\alpha_T$ is a constant to make the above inclusion an isometry. It is straightforward to see that $\alpha_T = \mathcal{O}(T^{(n-m)/4})$ as $T\to \infty$.

\begin{remark}
	Notice that our notation is slightly different from that of \cite{BismutLebeau91}. In particular, our $\alpha_T$ is equivalent to their $\alpha_T^{-1/2}$. This is clear from \cite[Definition~9.1]{BismutLebeau91}.
\end{remark}

Denote by $p_T: L^2(\mathcal{U}, \pi^\ast (S^{TM}\otimes \wedge^\ast N))\to L^2(\mathcal{U}, \pi^\ast (S^{TM}\otimes \wedge^\ast N))$ the projection onto the image of \eqref{eq-iso-incl}, then we have
\begin{equation}\label{eq-formula-of-pt}
p_T(\varphi) = \alpha_T^2\exp(-T|Z|^2/2)\rho(Z)\int q\varphi(y,Z^\prime)\exp(-T|Z^\prime|^2/2)\rho(Z^\prime) dZ^\prime
\end{equation}
for all $\varphi\in L^2(\mathcal{U}, \pi^\ast (S^{TM}\otimes \wedge^\ast N))$, where $q: S^{TM}\otimes \wedge^\ast N\to  S^{TM}$ is the fiberwise projection. 

\begin{lemma}{\cite[Proposition~9.3]{BismutLebeau91}}\label{lem-norm-p-t}
	There is a constant $C>0$, such that
	$$||p_T |Z|^\gamma s||^2_{L^2(\mathcal{U})} \leq C T^{-\gamma}||s||^2_{L^2(\mathcal{U})},$$ for all $s\in L^2(\mathcal{U},\pi^\ast(S^{TM}\otimes \wedge^\ast N))$.
\end{lemma}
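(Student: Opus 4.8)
The plan is to compute directly with the explicit integral kernel of $p_T$ recorded in \eqref{eq-formula-of-pt}, reducing everything to a Gaussian moment computation on the fibers $N_y$. First I would write, for $s\in L^2(\mathcal{U},\pi^\ast(S^{TM}\otimes\wedge^\ast N))$,
\[
(p_T|Z|^\gamma s)(y,Z) = \alpha_T^2\exp(-T|Z|^2/2)\rho(Z)\int q\big(|Z'|^\gamma s(y,Z')\big)\exp(-T|Z'|^2/2)\rho(Z')\,dZ'.
\]
Since $\rho\le 1$ and $|q|\le 1$, Cauchy--Schwarz in the $Z'$-variable (splitting $\exp(-T|Z'|^2/2)=\exp(-T|Z'|^2/4)\cdot\exp(-T|Z'|^2/4)$) gives a pointwise bound of the inner integral by
\[
\Big(\int |Z'|^{2\gamma}\exp(-T|Z'|^2/2)\,dZ'\Big)^{1/2}\Big(\int |s(y,Z')|^2\exp(-T|Z'|^2/2)\,dZ'\Big)^{1/2}.
\]
Then I would square, integrate the outer $\exp(-T|Z|^2)$ over $Z$, integrate over $y\in M$, and use Fubini to recognize the second factor, after the $Z$-integration, as producing $\|s\|^2_{L^2(\mathcal{U})}$ up to constants.

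The remaining ingredient is the scaling behavior of the Gaussian moments. Changing variables $Z'=W/\sqrt T$ (legitimate since $\rho$ only truncates and we can bound $\rho\le 1$, dropping it only makes the integral larger, which is in our favor) gives
\[
\int_{\mathbb{R}^{n-m}} |Z'|^{2\gamma}\exp(-T|Z'|^2/2)\,dZ' = T^{-(n-m)/2-\gamma}\int_{\mathbb{R}^{n-m}}|W|^{2\gamma}\exp(-|W|^2/2)\,dW,
\]
and similarly $\int\exp(-T|Z|^2/2)\,dZ$ and $\int\exp(-T|Z|^2)\,dZ$ each scale like $T^{-(n-m)/2}$ times a universal constant. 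Recalling $\alpha_T^2=\mathcal{O}(T^{(n-m)/2})$ (from $\alpha_T=\mathcal{O}(T^{(n-m)/4})$ stated just before the lemma), I would assemble the powers of $T$: the two factors of $\alpha_T^2$, one factor $T^{-(n-m)/2}$ from the outer $\exp(-T|Z|^2)$ integration, one factor $T^{-(n-m)/2-\gamma}$ from the moment, and one factor $T^{-(n-m)/2}$ that pairs with the $\int |s|^2 e^{-T|Z'|^2/2}$ term to normalize it against $\|s\|^2$. Tracking these exponents yields exactly the net power $T^{-\gamma}$, with $C$ depending only on $n-m$, $\gamma$, and the fixed cutoff geometry.

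I do not expect a serious obstacle here; the statement is essentially a bookkeeping exercise with Gaussian integrals once the kernel \eqref{eq-formula-of-pt} is in hand. The one point requiring a little care is the role of the cutoff $\rho$: one must make sure that discarding $\rho$ (replacing it by $1$) is always done in the direction that increases the integrand, and that the normalization constant $\alpha_T$, which is defined so that the inclusion \eqref{eq-iso-incl} \emph{with} $\rho$ present is an isometry, still satisfies the asserted asymptotic $\alpha_T=\mathcal{O}(T^{(n-m)/4})$ — this is because $\rho\equiv 1$ near $Z=0$ where the Gaussian concentrates as $T\to\infty$, so the truncation changes the normalization by a factor tending to $1$. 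Granting that (it is asserted in the line preceding the lemma), the power-counting above closes the argument, and this is precisely the content of \cite[Proposition~9.3]{BismutLebeau91} transported to our setting.
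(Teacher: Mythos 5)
Your proposal is correct and follows essentially the same route as the paper: write out the explicit kernel \eqref{eq-formula-of-pt}, apply Cauchy--Schwarz in the fiber variable, and scale the Gaussian moment $Z'=W/\sqrt T$; the paper's proof is exactly this (it only differs cosmetically in that it exploits the isometry of \eqref{eq-iso-incl} to drop the outer $Z$-integration, so a single $\alpha_T^2$ appears instead of your $\alpha_T^4$ times the outer Gaussian integral).

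One bookkeeping slip worth fixing: the factor ``$T^{-(n-m)/2}$ that pairs with $\int|s|^2e^{-T|Z'|^2/2}\,dZ'$'' does not exist. For general $s\in L^2$ that integral yields no power of $T$; you simply bound $e^{-T|Z'|^2/2}\le 1$ and integrate over $y$ to get $\|s\|^2_{L^2(\mathcal U)}$. The three genuine factors, $\alpha_T^4=\mathcal O(T^{\,n-m})$, $T^{-(n-m)/2}$ from the outer $\exp(-T|Z|^2)$ integration, and $T^{-(n-m)/2-\gamma}$ from the moment, already multiply to $T^{-\gamma}$, which is the claimed (and sharp) bound. As literally listed, your four factors would give $T^{-(n-m)/2-\gamma}$, which is false in general (test $s$ in the image of $p_T$ itself, for which $\|p_T|Z|^\gamma s\|^2\sim T^{-\gamma}\|s\|^2$). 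The conclusion you state is right; only the enumeration of exponents is internally inconsistent.
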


\begin{proof}
According to the formula of $p_T$, we have
	\[
	||p_T |Z|^\gamma s||^2_{L^2}= ||\alpha_T \int qs(y,Z)|Z|^\gamma \exp(-T|Z|^2/2)\rho(Z) dZ  ||^2_{L^2}.
	\]
	According to the Cauchy-Schwartz inequality, the integral on the right hand side is less than or equal to
	\[
	\alpha_T^2 \int ||qs(y,Z)||^2dZ \cdot \int |Z|^{2\gamma}\exp(-T|Z|^2)\rho^2(Z) dZ \leq CT^{-\gamma}||s||^2_{L^2}.
	\]
	This completes the proof.
\end{proof}

The inclusion \eqref{eq-iso-incl} can be further extended to 
$$
L^2(M,S^{TM})\hookrightarrow  
L^2(\mathcal{U},\pi^\ast (S^{TM}\otimes \wedge^\ast N))
\hookrightarrow
L^2(\mathcal{M}, S(\mathcal{F}\oplus \mathcal{F}_1^\perp)\otimes \wedge^\ast \mathcal{F}_2^\perp),
$$
where the second arrow sends a section $f$ to $fk^{-1/2}$ which is an isometric inclusion.
Let $\overline{p}_T:L^2(\mathcal{M}, S(\mathcal{F}\oplus \mathcal{F}_1^\perp)\otimes \wedge^\ast \mathcal{F}_2^\perp)\to L^2(\mathcal{M}, S(\mathcal{F}\oplus \mathcal{F}_1^\perp)\otimes \wedge^\ast \mathcal{F}_2^\perp)$ be the projection onto the image of this extended inclusion, then \begin{equation}\label{eq-formula-pt-bar}
	\overline{p}_T=k^{-1/2}p_Tk^{1/2}.
\end{equation}
Following Bismut-Lebeau, we shall decompose the self-adjoint deformed Dirac operator 
$$
A_T=D_{S(\mathcal{F}_2^\perp),\beta,\varepsilon}+T\widehat{c}(V),
$$ 
according to the projection $\overline{p}_T$, into a two by two matrix 
$
\begin{bmatrix}
	A_{T,1} & A_{T,2} \\
	A_{T,3} & A_{T,4}
\end{bmatrix}
$, where
$$A_{T,1}= \overline{p}_TA_T\overline{p}_T, \quad A_{T,2}=\overline{p}_TA_T(1-\overline{p}_T),\quad  A_{T,3}= (1-\overline{p}_T)A_T\overline{p}_T$$ and $$A_{T,4}= (1-\overline{p}_T)A_T(1-\overline{p}_T).$$
Notice that $A_{T,i}$ all depend on $\beta,\varepsilon$. However the parameters will not play a significant role until the last section, we choose to omit them from the notation. 
\begin{definition}
	Let us write $H_{T,4}\subset L^2(\mathcal{M}, S(\mathcal{F}\oplus \mathcal{F}_1^\perp)\otimes \wedge^\ast \mathcal{F}_2^\perp)$ for the orthogonal complement of the image of $\overline{p}_T$.
\end{definition}
Again, the space $H_{T,4}$ depends on the parameters $\beta,\varepsilon$, but we choose to omit them from the notation. Now we shall collect necessary estimates for each of the four entries $A_{T,i}$ in the following three subsections. 

\subsection{$A_{T,1}$}
The tangent bundle of $M$ can be decomposed as $TM=F\oplus TM/F$. Recall that we have choose a distinguished Euclidean metric $g^{TM/F}$ on $TM/F$ and there is a metric $g^F$ on $F$. Then $TM$ can be given the metrics
\[
\beta^2 g^F\oplus \frac{1}{\varepsilon^2}g^{TM/F}.
\]
Let $D_{\beta,\varepsilon}$ be the Dirac operator on $M$ associate with the above metric.
\begin{proposition}\label{prop-estimate-a1}
	If we identify $L^2(M,S^{TM})$ with the image of the projection $p_T$, $A_{T,1}$ can be taken as an operator on $L^2(M, S^{TM})$. Then there is a positive constant $C>0$ such that the difference $A_{T,1}-D_{\beta,\varepsilon}$ is a first order differential operator whose coefficients all have sup-norm less than or equal to $C/\sqrt{T}$ for all $T>1$.
\end{proposition}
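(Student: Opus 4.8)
The plan is to conjugate $A_{T,1}$ into the Bismut--Lebeau model picture and compare it, term by term, with $D_{\beta,\varepsilon}$, using the Gaussian concentration of the range of $p_T$. Under the identification of $L^2(M,S^{TM})$ with the range of $p_T$ via $s\mapsto\alpha_T\exp(-T|Z|^2/2)\rho(Z)s$ and the formula $\overline{p}_T=k^{-1/2}p_Tk^{1/2}$ of \eqref{eq-formula-pt-bar}, the operator $A_{T,1}=\overline{p}_TA_T\overline{p}_T$ becomes $q\,p_T\bigl(k^{1/2}A_Tk^{-1/2}\bigr)p_T$, where $q$ is the fibrewise projection onto the vacuum line of $\wedge^\ast N$. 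Since $k^{1/2}A_Tk^{-1/2}=D^H+D^N+T\widehat{c}(V)+R_T$ with $R_T=\mathcal{O}(|Z|^2\partial^N+|Z|\partial^H+|Z|)$, I would compress the three summands separately.

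First, $D^N+T\widehat{c}(V)$ contributes only an exponentially small error: by Proposition~\ref{prop-harmonic-oscillator} the section $\exp(-T|Z|^2/2)$ times the vacuum spans the kernel of $D^N+T\widehat{c}(V)$, so $(D^N+T\widehat{c}(V))(\exp(-T|Z|^2/2)\rho(Z)s)$ reduces to the term in which $D^N$ differentiates $\rho$, which is supported where $\rho$ is non-constant, hence where $\exp(-T|Z|^2/2)=\mathcal{O}(e^{-cT})$; since $\alpha_T=\mathcal{O}(T^{(n-m)/4})$ grows only polynomially, $q\,p_T(D^N+T\widehat{c}(V))p_T$ is a zeroth-order operator with coefficients $\mathcal{O}(e^{-cT})$. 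Next, $D^H$ produces exactly $D_{\beta,\varepsilon}$: it differentiates only in the base directions, so it leaves the Gaussian factor $\exp(-T|Z|^2/2)\rho(Z)$ untouched; composing its action with $q$, the $\wedge^\ast N$-connection annihilates the vacuum, while $\nabla^{\mathcal{F}\oplus\mathcal{F}_1^\perp,\beta,\varepsilon}$ restricts along $M$ to $p^{TM}\nabla^{T\mathcal{M},\beta,\varepsilon}p^{TM}$, which by the Gauss formula is the Levi--Civita connection of the induced metric $\beta^2g^F\oplus\varepsilon^{-2}g^{TM/F}$; moreover the residual zeroth-order contributions $k^{-1}\tau f_i(k)$ and $A^S(\tau f_i)$ already cancelled when $D^H$ was formed, via Proposition~\ref{prop-derivative-of-k}. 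Hence $q\,p_TD^Hp_T=D_{\beta,\varepsilon}$ with no $T$-independent remainder.

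The entire $T$-dependent error therefore comes from $R_T$, and it is controlled by the Gaussian moments of $p_T$: each power of $|Z|$ under the integral produces a factor $T^{-1/2}$ (this is Lemma~\ref{lem-norm-p-t}, or, for the pointwise bound on the coefficients, $\alpha_T^2\int|Z|^\gamma\exp(-T|Z|^2)\,dZ=\mathcal{O}(T^{-\gamma/2})$), whereas each vertical derivative falling on $\exp(-T|Z|^2/2)$ costs only one factor $T$, so there is always a net gain of at least $T^{-1/2}$. Writing $R_T=\sum_{i\le m}a_i\,{}^0\nabla_{f_i^H}+\sum_{i>m}b_i\,{}^0\nabla_{f_i}+c$ with $|a_i|\le C|Z|$, $|b_i|\le C|Z|^2$, $|c|\le C(|Z|+|Z|^p)$: the $b_i$-part, whose vertical derivative hits the Gaussian and gains $\sim T|Z|$, compresses to a zeroth-order operator of size $\mathcal{O}\bigl(\alpha_T^2\int T|Z|^3\exp(-T|Z|^2)\,dZ\bigr)=\mathcal{O}(T^{-1/2})$ --- it is essential here that the vertical derivative occurs only with the quadratic weight $|Z|^2$; the $a_i$-part, whose horizontal derivative leaves the Gaussian untouched, compresses to a genuine first-order operator on $M$ with coefficients $\mathcal{O}(T^{-1/2})$; and $c$ compresses to a zeroth-order term $\mathcal{O}(T^{-1/2})$ since $p\ge 1$. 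Summing, $A_{T,1}-D_{\beta,\varepsilon}$ is a first-order differential operator on $M$ whose coefficients have sup-norm $\le C/\sqrt T$. Granting the Bismut--Lebeau local computations already invoked (the $k^{1/2}$-normalisation, the Jacobi-field identity behind Proposition~\ref{prop-derivative-of-k}, and the $F_T$-rescaling), the one delicate point is the exact identification $q\,p_TD^Hp_T=D_{\beta,\varepsilon}$ --- that the projected connection is the Levi--Civita connection of the induced metric and that the $k$-weight has removed the residual zeroth-order terms --- the remainder being routine Gaussian bookkeeping of the kind carried out in \cite[Chapter~8 and 9]{BismutLebeau91}.
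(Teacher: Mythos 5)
Your proposal is correct and follows essentially the same route as the paper: conjugate by $k^{1/2}$ to get $D^H+D^N+T\widehat{c}(V)+R_T$, show the harmonic-oscillator part contributes nothing after compression, identify $q\,p_TD^Hp_T$ with $D_{\beta,\varepsilon}$ using that horizontal derivatives annihilate functions of $|Z|$, and control $R_T$ by the Gaussian moment bounds of Lemma~\ref{lem-norm-p-t}. The only (harmless) divergence is in the first step, where the paper observes that $c\bigl(\frac{\partial\rho}{\partial Z}\bigr)$ shifts the degree in $\wedge^\ast N$ so that the fibrewise projection $q$ kills it exactly, whereas you invoke the exponential smallness of the Gaussian on the support of $\nabla\rho$ --- both are valid.
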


\begin{proof}
Let $s\in L^2(M,S^{TM})$, then 
\begin{align*}
A_{T,1}s &= p_Tk^{1/2}(D_{S(\mathcal{F}_2^\perp),\beta,\varepsilon}+T\widehat{c}(V))k^{-1/2}\alpha_T \exp(-T|Z|^2/2)\rho(Z)s \\
&=p_T(D^H+D^N+T\widehat{c}(V)+R_T)\alpha_T \exp(-T|Z|^2/2) \rho(Z)s,
\end{align*}
where $R_T=\mathcal{O}\left(|Z|^2\partial^N+|Z|\partial^H+|Z|\right)$.
As $\exp(-T|Z|^2/2)$ is the kernel of $D^N+T\widehat{c}(V)$, we have 
$$\left(D^N+T\widehat{c}(V)\right) \alpha_T \exp(-T|Z|^2/2) \rho(Z) s = \alpha_T \exp(-T|Z|^2/2) c\left(\frac{\partial \rho}{\partial Z}(Z)\right)s.$$
The Clifford multiplication $c\left(\frac{\partial \rho}{\partial Z}(Z)\right)$ can be split as sum of two operators one rise the order of $\wedge^\ast N$ by one the other lower the order by one. Thus, its image under the projection $p_T$ is zero. The same reasoning implies that the contribution from the term
$
\widehat{c}(\nabla_Z^2 V)
$
is zero.

On the other hand, if $(y(s), Z(s))$ is a horizontal section of $N$ over the curve $y(s): [0,1]\to Y$, then $\nabla^N_{\dot{y}(s)} Z(s)=0$ which implies that $|Z(s)|$ is constant for all $s\in [0,1]$. As a consequence, we have 
$
e_i^H\left(\exp(-T|Z|^2/2)\rho(Z)\right)=0
$
for any horizontal vector field $e_i^H$.
Therefore, 
\[
p_TD^H \alpha_T \exp(-T|Z|^2/2)\rho(Z)s = \alpha_T \exp(-T|Z|^2/2)\rho(Z) D_{\beta,\varepsilon} s,
\] 
which, under the light of the inclusion \eqref{eq-iso-incl}, can be taken as the element $D_{\beta,\varepsilon}s\in L^2(M,S^{TM})$.

It remains to analyze the remainder term $R_T$. Since 
\begin{equation}\label{eq-123123}
p_T \alpha_T |Z|^2 \partial^N \left( \exp(-T|Z|^2/2)\rho(Z) s(y)\right) 
=
p_T\alpha_T |Z|^2\partial^N \left( \exp(-T|Z|^2/2)\rho(Z)\right) s(y),
\end{equation}
and the differential in the right hand side is of the form
\[
\partial^N \left( \exp(-T|Z|^2/2)\right)\rho(Z)+\exp(-T|Z|^2/2) \partial^N\left(\rho(Z)\right).
\]
According to the Lemma~\ref{lem-norm-p-t}, \eqref{eq-123123} $\leq CT^{-1/2}||s||_{L^2(M,S^{TM})}$.
On the other hand, 
\begin{equation*}
p_T \alpha_T |Z|\partial^H \exp(-T|Z|^2/2) \rho(Z) s(y) = p_T\alpha_T |Z| \exp(-T|Z|^2/2) \rho(Z) \partial^H s(y) 
\end{equation*}
whose norm is less than or equal to $T^{-1/2}\alpha_T ||\partial^H s||_{L^2(M,S^{TM})}$. Lemma~\ref{lem-norm-p-t} also implies that 
\[
||p_T\alpha_T |Z|\exp(-T|Z|^2/2)\rho(Z) s(y)||_{L^2} \leq CT^{-1/2}||s||_{L^2(M,S^{TM})}.
\]
This completes the proof.
\end{proof}

\begin{proposition}
	For any $\beta,\varepsilon$ there is a constant $T_0>0$ such that $[L^2(M,S^{TM}),F(A_{T,1})]$ is a Kasparov module whose class is the fundamental class given by the Dirac operator for all $T\geq T_0$.
\end{proposition}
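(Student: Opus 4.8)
The plan is to read off from Proposition~\ref{prop-estimate-a1} that, for $T$ large, $A_{T,1}$ is a small first-order perturbation of the rescaled Dirac operator $D_{\beta,\varepsilon}$ on the \emph{compact} manifold $M$, and then to run the standard homotopy argument for $K$-homology classes of elliptic operators. First I would record, from Proposition~\ref{prop-estimate-a1}, that after identifying $L^2(M,S^{TM})$ with the image of $p_T$ one has $A_{T,1}=D_{\beta,\varepsilon}+B$, where $B$ is a first-order differential operator on $M$ whose coefficients have sup-norm $\le C/\sqrt T$. Consequently the principal symbol of $A_{T,1}$ at $(x,\xi)$ differs from $c_{\beta,\varepsilon}(\xi)$ by an endomorphism of norm $\le (C/\sqrt T)\,|\xi|$; since $c_{\beta,\varepsilon}(\xi)$ is invertible for $\xi\neq 0$ with $\|c_{\beta,\varepsilon}(\xi)^{-1}\|$ bounded on the unit cosphere bundle by a constant depending only on $\beta,\varepsilon$, there is $T_0=T_0(\beta,\varepsilon)$ such that for all $T\ge T_0$ the symbol of $A_{T,1}$ remains invertible off the zero section. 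Thus $A_{T,1}$ is elliptic; being the compression $\overline{p}_TA_T\overline{p}_T$ of the self-adjoint operator $A_T$ it is formally self-adjoint, and an elliptic symmetric first-order operator on a compact manifold is essentially self-adjoint with compact resolvent. We keep the notation $A_{T,1}$ for this self-adjoint closure.

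Next I would check the Kasparov module axioms for $[L^2(M,S^{TM}),F(A_{T,1})]$. Since $A_{T,1}$ is self-adjoint and $F$ is real-valued, $F(A_{T,1})$ is self-adjoint, and $1-F(A_{T,1})^2=(1+A_{T,1}^2)^{-1}$ is compact by compactness of the resolvent. For $f\in C^\infty(M)$ the commutator $[f,A_{T,1}]=[f,D_{\beta,\varepsilon}]+[f,B]$ is a bounded (order zero) operator because the coefficients of $B$ are bounded and $M$ is compact; plugging $D=A_{T,1}$ into formula \eqref{eq-F-integral} and observing, exactly as in the earlier proofs of this section, that the integrand of $[f,A_{T,1}(1+\lambda^2+A_{T,1}^2)^{-1}]$ is compact of norm $\mathcal{O}((1+\lambda^2)^{-1})$, shows $[f,F(A_{T,1})]$ is compact; by density this extends to all $f\in C(M)$. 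Hence $[L^2(M,S^{TM}),F(A_{T,1})]$ is a Kasparov $(C(M),\mathbb{C})$-module.

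To identify its class I would use the straight-line family $A_s:=D_{\beta,\varepsilon}+sB$ for $s\in[0,1]$. The symbol estimate above shows that for $T\ge T_0$ every $A_s$ is elliptic; all the $A_s$ are formally self-adjoint, share the domain $H^1(M,S^{TM})$, and $s\mapsto A_s$ is norm-continuous into $\mathcal{B}(H^1(M,S^{TM}),L^2(M,S^{TM}))$. Using the uniform Garding inequality valid along this compact family of invertible symbols, $(A_s\pm i)^{-1}$ is norm-continuous in $s$, hence so is $s\mapsto F(A_s)$; by the previous paragraph each $[L^2(M,S^{TM}),F(A_s)]$ is a Kasparov module, so we obtain an operator homotopy joining $[L^2,F(A_{T,1})]$ to $[L^2,F(D_{\beta,\varepsilon})]$. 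A second, analogous step, relying on the standard metric-independence of the Dirac $K$-homology class — deform $\beta^2g^F\oplus\varepsilon^{-2}g^{TM/F}$ linearly to the fixed metric defining $D$, the space of metrics being convex and the spinor bundles and $L^2$-spaces being canonically identified — then identifies $[L^2,F(D_{\beta,\varepsilon})]$ with the class $[C(M,S^{TM}),F(D)]$ of \eqref{eq-dirac-k-homology}, i.e. the fundamental class. This completes the argument.

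The only delicate point is the norm-continuity of $s\mapsto F(A_s)$, and likewise of the metric deformation: the perturbations here are genuinely first-order, so one cannot appeal to continuity of the functional calculus under bounded perturbations and must instead extract resolvent bounds that are uniform in $s$ from the fact that the principal symbols stay within a fixed compact set of invertible symbols (equivalently, from a uniform elliptic estimate). Once these uniform bounds are in hand, the rest is the standard bounded-transform machinery for elliptic operators on closed manifolds, and the conclusion follows.
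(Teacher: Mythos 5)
Your proposal is correct, and its first half (ellipticity of $A_{T,1}$ for large $T$ from Proposition~\ref{prop-estimate-a1}, compactness of the resolvent on the closed manifold $M$, boundedness of $[f,A_{T,1}]$) coincides with the paper's argument. Where you diverge is in identifying the class: the paper simply observes that $F(A_{T,1})$ converges to $F(D_{\beta,\varepsilon})$ in the strong operator topology as $T\to\infty$ and stops there, whereas you fix $T\ge T_0$ and build an explicit operator homotopy along the straight line $A_s=D_{\beta,\varepsilon}+sB$, extracting norm-continuity of $s\mapsto F(A_s)$ from uniform elliptic estimates, and then append a second homotopy through metrics to pass from $D_{\beta,\varepsilon}$ to $D$. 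Your version is the more complete one: strong operator convergence of the bounded transforms alone does not produce a homotopy of Kasparov modules (for $B=\mathbb{C}$ one needs, in effect, a norm-continuous path of Fredholm operators with the compactness conditions holding along the path), so the paper's one-line appeal to SOT convergence is really shorthand for the resolvent-continuity argument you spell out; you also make explicit the metric-independence step from $D_{\beta,\varepsilon}$ to $D$, which the paper leaves implicit. The only caveat is that your symbol perturbation bound should be phrased with the constant $C$ of Proposition~\ref{prop-estimate-a1} allowed to depend on $\beta,\varepsilon$ (consistent with Remark~\ref{remk-constant-theta}), which is harmless since $T_0$ is permitted to depend on $\beta,\varepsilon$ in the statement.
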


\begin{proof}
	For fixed $\beta, \varepsilon$, according to the Proposition~\ref{prop-estimate-a1}, the operator $A_{T,1}$ is elliptic for sufficiently large $T$. Since $M$ is compact, $[L^2(M,S^{TM}),F(A_{T,1})]$ forms a Kasparov module. On the other hand, Proposition~\ref{prop-estimate-a1} also implies that $F(A_{T,1})$ converges in the sense of strong operator topology to $F(D_{\beta,\varepsilon})$. This completes the proof.
\end{proof}

\subsection{$A_{T,2}$ and $A_{T,3}$}

\begin{proposition}\label{prop-DH-commute-PT}
	$[D^H, p_T]=0$.
\end{proposition}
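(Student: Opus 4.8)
The plan is to reduce $[D^{H},p_{T}]=0$ to the statement that $D^{H}$ leaves invariant both the range of $p_{T}$ and its orthogonal complement; since $p_{T}$ is the orthogonal projection onto that range, the two are equivalent. By \eqref{eq-formula-of-pt} every element of the range of $p_{T}$ has the form $\alpha_{T}^{2}\exp(-T|Z|^{2}/2)\rho(Z)\,\pi^{\ast}s$ with $s$ a section of $S^{TM}=S^{TM}\otimes\wedge^{0}N$, i.e.\ valued in the degree-zero summand of $S^{TM}\otimes\wedge^{\ast}N$. I would then invoke three structural facts: (i) for $1\le i\le m$ the Clifford multiplications $c_{\beta,\varepsilon}(f_{i})$ act only on the factor $S(\mathcal{F}\oplus\mathcal{F}_{1}^{\perp})\cong\pi^{\ast}S^{TM}$ and hence preserve the summand $\wedge^{0}N$ and commute with the fibrewise projection $q$; (ii) the connection ${}^{0}\nabla$ has its $\wedge^{\ast}\mathcal{F}_{2}^{\perp}$-factor induced from $\nabla^{N}$, so it preserves exterior degree and fixes the unit section of $\wedge^{0}N$; (iii) the horizontal lifts $f^{H}_{i}$ annihilate $\exp(-T|Z|^{2}/2)\rho(Z)$ because $|Z|$ is constant along horizontal curves, exactly as in the proof of Proposition~\ref{prop-estimate-a1}.

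From (i)--(iii) and the Leibniz rule, $D^{H}$ carries $\alpha_{T}^{2}\exp(-T|Z|^{2}/2)\rho(Z)\pi^{\ast}s$ to $\alpha_{T}^{2}\exp(-T|Z|^{2}/2)\rho(Z)\pi^{\ast}(D_{\beta,\varepsilon}s)$ --- which is precisely the computation already performed inside the proof of Proposition~\ref{prop-estimate-a1} --- so $D^{H}$ maps the range of $p_{T}$ into itself. Since $D^{H}$ is formally self-adjoint for the product measure $d\nu_{M}(y)\,d\nu_{N_{y}}(Z)$ on $\mathcal{U}$ (which is how the conjugation by $k^{1/2}$ was arranged), and since the image of $C^{\infty}_{c}(M,S^{TM})$ under \eqref{eq-iso-incl} is a dense subset of that range contained in the domain of $D^{H}$, it follows that $D^{H}$ also preserves the orthogonal complement of the range; hence it commutes with $p_{T}$. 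An equivalent, more hands-on route is to check directly from \eqref{eq-formula-of-pt} that $[c_{\beta,\varepsilon}(f_{i}),p_{T}]=0$ (immediate from (i)) and $[{}^{0}\nabla_{f^{H}_{i}},p_{T}]=0$ (from (ii), (iii), plus the fact that a horizontal covariant derivative commutes with the fibrewise integration of $q\varphi$ against the fixed weight $\exp(-T|Z^{\prime}|^{2}/2)\rho(Z^{\prime})$), and then to sum $c_{\beta,\varepsilon}(f_{i}){}^{0}\nabla_{f^{H}_{i}}$ over $i$.

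The step I expect to be the main obstacle is justifying that a horizontal covariant derivative commutes with integration over the fibres of $N$ against this Gaussian weight: concretely, that once $N$ is trivialized over a chart by $\nabla^{N}$-parallel sections as in \eqref{eq-y-z}, the horizontal lift $f^{H}_{i}$ amounts to ``$f_{i}$ with $Z$ held fixed'' and the connection coefficients of ${}^{0}\nabla$ in the horizontal directions on $\pi^{\ast}S^{TM}$ carry no $Z$-dependence at the relevant order --- the discrepancy with the full Levi--Civita connection $\nabla^{T\mathcal{M}}$ having been absorbed into the remainder $\mathcal{O}(|Z|^{2}\partial^{N}+|Z|\partial^{H}+|Z|)$ and so absent from $D^{H}$. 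The remaining points --- that $\rho$, not only the Gaussian, is horizontally constant since it depends on $|Z|$ alone, and the usual care in choosing a common core for the unbounded operator identities and extending by closedness --- are routine.
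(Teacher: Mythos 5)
Your argument is correct, and it packages the second half of the proof differently from the paper. The first half --- that $D^{H}$ maps the range of $p_{T}$ into itself because the Gaussian weight and $\rho$ are horizontally constant, ${}^{0}\nabla$ preserves exterior degree and fixes the unit section of $\wedge^{0}N$, and $c_{\beta,\varepsilon}(f_{i})$ for $i\le m$ commutes with $q$ --- is exactly the paper's first displayed computation of $D^{H}p_{T}\varphi$. Where you diverge is in handling $p_{T}D^{H}\varphi$ for general $\varphi$: the paper computes it directly, expanding $q\,{}^{0}\nabla_{f_{i}^{H}}$ on a product section and killing the degree-raising term by an integration by parts against the horizontally constant weight; you instead invoke formal symmetry of $D^{H}$ for the product measure $d\nu_{M}\,d\nu_{N_{y}}$ to conclude that invariance of the range forces invariance of its orthogonal complement. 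These are essentially the same computation in different clothing --- the paper's integration by parts \emph{is} the adjoint identity you need --- but your version is cleaner provided you justify the symmetry of $D^{H}$ correctly. The one point to fix: you cannot deduce it from the fact that $k^{1/2}A_{T}k^{-1/2}$ is self-adjoint for the product measure, since the splitting $D^{H}+D^{N}+T\widehat{c}(V)+R_{T}$ is not a priori a splitting into symmetric pieces. You should instead check it directly, which works because ${}^{0}\nabla$ is metric and the flow of $f_{i}^{H}$ acts on the fibres of $N$ by $\nabla^{N}$-parallel transport, hence preserves $d\nu_{N_{y}}$, so the divergence of $f_{i}^{H}$ for the product measure equals that of $f_{i}$ on $M$ and the usual Dirac-operator symmetry computation goes through. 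With that repaired, and the density/core remarks you already make, the proof is complete.
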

\begin{proof}
	This is a straightforward calculation, on one hand,
	\[
	D^H p_T\varphi  =  \alpha^2_T \exp(-T|Z|^2/2)\rho(Z) \int   D_{\beta,\varepsilon}q\varphi(y,Z^\prime) \exp(-T|Z^\prime|^2/2)\rho(Z^\prime) dZ^\prime.
	\]
	On the other hand,
	\[
		p_TD^H\varphi = \alpha_T^2\exp(-T|Z|^2/2)\rho(Z)\int qD^H\varphi(y,Z^\prime)\exp(-T|Z^\prime|^2/2)\rho(Z^\prime) dZ^\prime.
	\]
	The integral on the right hand side can be written as
	\begin{equation}\label{eq-prop-4.9}
	\sum_{i=1}^m\int q c_{\beta,\varepsilon}(f_i) {}^0\nabla_{f_i^H}^{\pi^\ast(S^{TM}\otimes \wedge^\ast N),\beta,\varepsilon}
	\varphi(y,Z^\prime) \exp(-T|Z^\prime|^2/2)\rho(Z^\prime) dZ^\prime.
	\end{equation}
	It is straightforward to see that the projection $q$ commutes with Clifford multiplications $c_{\beta,\varepsilon}(f_i)$, and 
	\begin{multline*}
	q{}^0\nabla_{f_i^H}^{\pi^\ast(S^{TM}\otimes \wedge^\ast N),\beta,\varepsilon}k\pi^\ast \varphi_1\otimes \pi^\ast \varphi_2 
	\\
	= f_i^H(k)\varphi_2^{[0]} \pi^\ast \varphi_1+k\varphi_2^{[0]}\pi^\ast \nabla^{S^{TM}}_{f_i} \varphi_1+kf_i(\varphi_2^{[0]})\pi^\ast \varphi_1,
	\end{multline*}
	for all $1\leq i\leq m$, $k$ smooth function on $\mathcal{U}$, $\varphi_1$ smooth section of $S^{TM}\to M$ and $\varphi_2$ smooth section of $\wedge^\ast N\to M$. The first component $f_i^H(k)\varphi_2^{[0]}\pi^\ast \varphi_1$ can be decomposed as 
	\[
	p^Nf_i^H(k)\varphi_2^{[0]}\pi^\ast \varphi_1+f_i(k)\varphi_2^{[0]}\pi^\ast \varphi_1.
	\]
	Since $\exp(-T|Z^\prime|^2/2)\rho(Z^\prime)$ is constant with respect to $f^H_i$, by an integration by part argument, the first summand in the above expression has no contribution to the integral \eqref{eq-prop-4.9}.
	 Therefore \eqref{eq-prop-4.9} equals
	\[
	\int D_{\beta,\varepsilon} q\varphi(y,Z^\prime) \exp(-T|Z^\prime|^2/2)\rho(Z^\prime) dZ^\prime.
	\]
	This completes the proof.
\end{proof}

According to the discussion in Section~\ref{sec-sobolev-elliptic}, the Sobolev spaces $H^j(K)$ for compact subset $K\subset \mathcal{M}$ is well-defined and independent of the choice of metrics and connections. In the following, if $s\in L^2(\mathcal{M}, S(\mathcal{F}\oplus \mathcal{F}_1^\perp)\otimes \wedge^\ast \mathcal{F}_2^\perp)$ is compactly supported, we shall write 
$
||s||_{H^1}
$
for its norm in the Sobolev space 
$$H^1\left(K,S(\mathcal{F}\oplus \mathcal{F}_1^\perp)\otimes \wedge^\ast \mathcal{F}_2^\perp\right)$$ for any compact subset $K\subset \mathcal{M}$ that contains the support of $s$.

\begin{proposition}\label{prop-off-diagonal-operators}
For any compactly supported smooth function $\chi$ on $\mathcal{M}$ that equals $1$ on $\mathcal{U}$, 	there is a constant $C>0$ such that
	\[
	||A_{T,2} s||_{L^2(\mathcal{M})} \leq C\left(\frac{1}{\sqrt{T}}||\chi s||_{H^1}+ ||\chi s||_{L^2(\mathcal{M})} \right),
	\]
	  for all $s\in H_{T,4}$, and
	\[
	||A_{T,3} s||_{L^2(\mathcal{M})} \leq C \left(\frac{1}{\sqrt{T}}||s||_{H^1}+ ||s||_{L^2(\mathcal{M})}\right),
	\]
	for all $s$ belongs to the image of $\overline{p}_T$.
\end{proposition}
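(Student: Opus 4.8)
The plan is to transport the problem to the conjugated local model over $\mathcal{U}$, in which $A_T=D_{S(\mathcal{F}_2^\perp),\beta,\varepsilon}+T\widehat{c}(V)$ becomes $D^H+D^N+T\widehat{c}(V)+R_T$. By \eqref{eq-formula-pt-bar} and the explicit formula \eqref{eq-formula-of-pt}, $\overline{p}_T$ depends on a section only through its restriction to $\operatorname{supp}\rho\subset\mathcal{U}$, and multiplication $f\mapsto k^{1/2}f$ is an isometry from the $L^2$-sections over $\mathcal{U}$ onto $L^2(\mathcal{U},d\nu_Md\nu_{N_y})$ carrying $\overline{p}_T$ to $p_T$ and $A_T|_{\mathcal{U}}$ to $D^H+D^N+T\widehat{c}(V)+R_T$, by the conjugation identity established above. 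Fixing a cut-off $\chi$ with $\chi\equiv 1$ on $\mathcal{U}$ and using that $A_{T,2}s=\overline{p}_TA_Ts$ for $s\in H_{T,4}$ (since then $(1-\overline{p}_T)s=s$), the first estimate reduces to bounding $\big\|p_T\big(D^H+D^N+T\widehat{c}(V)+R_T\big)w\big\|$ where $w=k^{1/2}s|_{\mathcal{U}}$ satisfies $p_Tw=0$, $\|w\|_{H^1(\operatorname{supp}\rho)}\le C\|\chi s\|_{H^1}$ and $\|w\|_{L^2}\le C\|\chi s\|_{L^2}$. Similarly, $A_{T,3}s=(1-\overline{p}_T)A_Ts$ for $s\in\operatorname{im}\overline{p}_T$, and such an $s$ equals $k^{-1/2}\alpha_T\exp(-T|Z|^2/2)\rho(Z)\sigma$ for a unique $\sigma\in L^2(M,S^{TM})$ with $\|\sigma\|_{L^2(M)}=\|s\|_{L^2(\mathcal{M})}$ and $\|\sigma\|_{H^1(M)}\le C\|s\|_{H^1(\mathcal{M})}$ (this last inequality because over the compact set $\operatorname{supp}\rho$ the first Sobolev norm is metric-independent and the powers of $\alpha_T=\mathcal{O}(T^{(n-m)/4})$ cancel against the Gaussian integrals); so the second estimate reduces to bounding $\big\|(1-p_T)\big(D^H+D^N+T\widehat{c}(V)+R_T\big)\big(\alpha_T\exp(-T|Z|^2/2)\rho(Z)\sigma\big)\big\|$.

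In both reductions the $D^H$- and $\big(D^N+T\widehat{c}(V)\big)$-pieces disappear cheaply. By Proposition~\ref{prop-DH-commute-PT}, $[D^H,p_T]=0$, so $p_TD^Hw=D^Hp_Tw=0$ (as $p_Tw=0$) and $(1-p_T)D^H\big(\alpha_T\exp(-T|Z|^2/2)\rho(Z)\sigma\big)=(1-p_T)D^Hp_T\big(\alpha_T\exp(-T|Z|^2/2)\rho(Z)\sigma\big)=0$ since the Gaussian lies in $\operatorname{im}p_T$. By Proposition~\ref{prop-harmonic-oscillator} the Gaussian is the fibrewise kernel of $D^N+T\widehat{c}(V)$, and, exactly as in the proof of Proposition~\ref{prop-estimate-a1}, $\big(D^N+T\widehat{c}(V)\big)\big(\alpha_T\exp(-T|Z|^2/2)\rho(Z)\phi\big)=\alpha_T\exp(-T|Z|^2/2)\,c\big(\tfrac{\partial\rho}{\partial Z}\big)\phi$, which is supported in $\{\partial\rho/\partial Z\neq 0\}$; choosing $\rho$ equal to $1$ near $Z=0$, this set lies in $\{|Z|\ge\delta\}$, so the $L^2$-norm of the left side is $\le\alpha_T e^{-T\delta^2/2}\,C\|\phi\|\le Ce^{-cT}\|\phi\|$ (the polynomial $\alpha_T$ being swallowed by the exponential), and by self-adjointness of $D^N+T\widehat{c}(V)$ the bound $\big\|p_T(D^N+T\widehat{c}(V))\,\cdot\,\big\|\le Ce^{-cT}\|\cdot\|$ follows as well. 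So this piece contributes only $Ce^{-cT}\|\chi s\|_{L^2}$, resp.\ $Ce^{-cT}\|s\|_{L^2}$, absorbed into the plain $L^2$-term in the statement.

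The real content is the remainder $R_T=\mathcal{O}\big(|Z|^2\partial^N+|Z|\partial^H+|Z|\big)$. For $A_{T,2}$ I would write each coefficient of $R_T$ as $|Z|^\gamma$ times a bounded function ($\gamma=2$ for the $\partial^N$-coefficients, $\gamma=1$ for the $\partial^H$-coefficients and the $|Z|$-term, and $\gamma=p$ for the $|Z|^p$-summand) and apply Lemma~\ref{lem-norm-p-t}: $\|p_T(|Z|^\gamma\psi)\|\le CT^{-\gamma/2}\|\psi\|$, with $\|\psi\|\le C\|w\|_{H^1(\operatorname{supp}\rho)}$ in each case since the horizontal or vertical derivative involved is one component of the first Sobolev norm; as $T^{-\gamma/2}\le T^{-1/2}$ for $\gamma\ge 1$, $T\ge 1$, this gives $\le CT^{-1/2}\|\chi s\|_{H^1}$. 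For $A_{T,3}$ I would instead use that $\partial^H$ annihilates $\exp(-T|Z|^2/2)\rho(Z)$ (horizontal transport preserves $|Z|$, as in the proof of Proposition~\ref{prop-estimate-a1}) and that a vertical derivative of the Gaussian produces a factor $\mathcal{O}(T|Z|)$; hence $R_T\big(\alpha_T\exp(-T|Z|^2/2)\rho(Z)\sigma\big)$ is a finite sum of terms of the form $\alpha_T|Z|^{j}\exp(-T|Z|^2/2)\cdot(\text{bounded})\cdot\big(\sigma\text{ or }\nabla^{S^{TM}}\sigma\big)$ with $j\ge 1$ for the $|Z|\partial^H$- and $|Z|$-contributions and $j=3$, carrying one extra factor $T$, for the $|Z|^2\partial^N$-contribution. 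Since $\alpha_T^2\int|Z|^{2j}\exp(-T|Z|^2)\rho^2\le CT^{-j}$, each such term has $L^2$-norm $\le CT^{-j/2}\big(\|\sigma\|_{L^2(M)}\text{ or }\|\nabla\sigma\|_{L^2(M)}\big)$, and $T\cdot T^{-3/2}=T^{-1/2}$; together with $\|1-p_T\|\le 1$ this yields $\le CT^{-1/2}\|\sigma\|_{H^1(M)}\le CT^{-1/2}\|s\|_{H^1(\mathcal{M})}$. Assembling the three pieces proves both inequalities; the whole argument is an adaptation of \cite[Chapter~9]{BismutLebeau91}.

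I expect the step requiring the most care to be the reduction of the first paragraph, not any individual estimate. Because $A_{T,2}$ is tested on an arbitrary $s\in L^2(\mathcal{M},S(\mathcal{F}\oplus\mathcal{F}_1^\perp)\otimes\wedge^\ast\mathcal{F}_2^\perp)$ --- neither compactly supported nor a priori in $H^1$ --- one must first localise to $\mathcal{U}$, introduce $\chi$, and check that every Sobolev and $L^2$ quantity appearing is finite and controlled by $\|\chi s\|_{H^1}$ and $\|\chi s\|_{L^2}$ with constants \emph{independent of} $T\ge 1$; that last uniformity is precisely what forces one to track the normalisation $\alpha_T=\mathcal{O}(T^{(n-m)/4})$ against the Gaussian moment integrals and verify that it cancels wherever it occurs.
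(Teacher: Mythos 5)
Your proposal is correct and follows essentially the same route as the paper: conjugate to the local model $D^H+D^N+T\widehat{c}(V)+R_T$ over $\mathcal{U}$, kill the $D^H$ term via Proposition~\ref{prop-DH-commute-PT}, reduce the $D^N+T\widehat{c}(V)$ term to the $c(\partial\rho/\partial Z)$ contribution supported where $\rho$ is non-constant, and control $R_T$ by Gaussian moment bounds as in Lemma~\ref{lem-norm-p-t}. The only (harmless) divergence is in the $A_{T,3}$ remainder, where the paper appeals to a duality/adjoint argument (``similarly'') while you compute $R_T$ applied to the Gaussian directly, tracking the extra factor $T|Z|$ from the vertical derivative --- a slightly more explicit version of the same estimate.
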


\begin{proof}
Since $s\in H_{T,4}$, according to \eqref{eq-formula-of-pt} and \eqref{eq-formula-pt-bar}, we have $\chi s\in H_{T,4}$ and $\overline{p}_TA_Ts = \overline{p}_TA_T \chi s$. In the following analysis of $A_{T,2}$ we shall always use $\chi s$ instead of $s$. Let $\overline{\chi s}= k^{1/2}\chi s$, then 
\[
||\overline{\chi s}||_{L^2(\mathcal{U},\pi^\ast(S^{TM}\otimes \wedge^\ast N))}= ||\chi s||_{L^2(\mathcal{M},S(\mathcal{F}\oplus \mathcal{F}_1^\perp)\otimes \wedge^\ast \mathcal{F}_2)}.
\]
	According to the definition of $A_{T,2}$, we have
	\begin{align*}
	A_{T,2}\chi s &= \overline{p}_T A_T \chi s \\
	&= k^{-1/2}p_T \left(D^N+D^H+T \widehat{c}(V)+R_T\right)\overline{\chi s},
	\end{align*}
where
\begin{align}\label{align-1}
	&p_T(D^N+T\widehat{c}(V))\overline{\chi s} \\
	=& \alpha_T^2\exp(-T|Z|^2/2)\rho(Z) \int q(D^N+T\widehat{c}(Z^\prime)) \overline{\chi s}(y,Z^\prime) \exp(-T|Z^\prime|^2/2) \rho(Z^\prime) dZ^\prime.  \nonumber
\end{align}
An integration by parts argument shows that 
\begin{multline*}
\int q(D^N+T\widehat{c}(Z^\prime))\overline{\chi s}(y,Z^\prime) \exp(-T|Z^\prime|^2/2) \rho(Z^\prime) dZ^\prime 
\\= 
\int q c\left(\frac{\partial \rho}{\partial Z}(Z^\prime)\right) \exp(-T|Z^\prime|^2/2) \overline{\chi s}(y,Z^\prime) dZ^\prime.
\end{multline*}
Therefore, there is a constant $C_1>0$ such that 
\begin{align}\label{align-esimate-pt}
||p_T(D^N+T\widehat{c}(V))\overline{\chi s}||^2_{L^2(\mathcal{U})}&= ||\alpha_T \int qc\left(\frac{\partial \rho}{\partial Z}(Z)\right)\exp(-T|Z|^2/2)\overline{\chi s}(y,Z)dZ||^2_{L^2(M)} \\\nonumber
&\leq
\alpha_T^2\int \left|\frac{\partial \rho}{\partial Z}(Z)\right|^2\exp(-T|Z|^2) dZ\cdot ||\overline{\chi s}||_{L^2(\mathcal{U})} \\\nonumber
&\leq C_1 T^{-1}||\chi s||_{L^2(\mathcal{M})}.
\end{align}
 According to Proposition~\ref{prop-DH-commute-PT}, we have $p_TD^H \overline{\chi s} = 0$. According to Lemma~\ref{lem-norm-p-t}, we have that there is a constant $C_2>0$ such that 
 \begin{multline*}
 ||k^{-1/2}p_T R_T \overline{\chi s}||_{L^2(\mathcal{M})} \leq  C_2\Big( T^{-1}||k^{-1/2}\partial^N k^{1/2}\chi s ||_{L^2(\mathcal{M})}
  \\
 +T^{-1/2}||k^{-1/2}\partial^H k^{1/2}\chi s ||_{L^2(\mathcal{M})}+T^{-1/2}||\chi s ||_{L^2(\mathcal{M})}\Big).
 \end{multline*}
This completes the first part of the proof.

If $s\in L^2(\mathcal{U},\pi^\ast (S^{TM}\otimes \wedge^\ast N))$ belongs to the image of $\overline{p}_T$, the action of $A_{T,3}$ on $s$ is given by
\[
p_T^\perp\left(D^N+ T \widehat{c}(V)\right) s + p_T^\perp R_T s.
\]
According to \eqref{align-esimate-pt}, for any $s^\prime\in L^2(\mathcal{U}, \pi^\ast (S^{TM}\otimes \wedge^\ast N))$ we have
\[
\left|\langle p_T \left(D^N+T\widehat{c}(V) \right)s, s^\prime \rangle_{L^2(\mathcal{U})} \right|\leq C_1T^{-1/2}||s||_{L^2(\mathcal{U})}||s^\prime||_{L^2(\mathcal{U})}.
\]
By taking the adjoint, the left hand side of the inequality is equal to $$\langle s, \left(D^N+T\widehat{c}(V) \right)p_Ts^\prime \rangle_{L^2(\mathcal{U})}.$$ This shows that 
\[
||\left(D^N+T\widehat{c}(V) \right)p_Ts||_{L^2(\mathcal{U})}\leq C_1T^{-1/2}||s||_{L^2(\mathcal{U})}.
\]
Similarly, one has
\[
||R_T p_T s||_{L^2(\mathcal{U})}\leq C_2T^{-1/2}||s||_{H^1}.
\]
This completes the proof.
\end{proof}

\begin{remark}\label{rem-decomp-A-2-3}
	The operators $A_{T,2}, A_{T,3}$ can be decomposed into two parts 
	\[
	A_{T,2} = k^{-1/2}p_T \left(D^N+T \widehat{c}(V)\right)p_T^\perp k^{1/2}+ k^{-1/2}p_T R_T p_T^\perp k^{1/2},
	\]
	and
	\[
	A_{T,3} = k^{-1/2}p_T^\perp \left(D^N+T \widehat{c}(V)\right)p_T k^{1/2}+ k^{-1/2}p_T^\perp R_T p_T k^{1/2}.
	\]
	Their first components are bounded operators from $L^2$ to $L^2$. Let 
	\[
	R= 
	\begin{bmatrix}
		0 & k^{-1/2}p_T R_T p_T^\perp k^{1/2}\\
		k^{-1/2}p_T^\perp R_T p_Tk^{1/2} & 0
	\end{bmatrix}.
	\]
	Then for any compact supported smooth function $\chi\in C^\infty_c(\mathcal{M})$ that equals $1$ on $\mathcal{U}$, there is a positive constant $C$ such that
	\[
	||Rs||_{L^2(\mathcal{M})}\leq CT^{-1/2}\left(||\chi s||_{H^1}+||\chi s||_{L^2(\mathcal{M})} \right).
	\]
\end{remark}

\subsection{$A_{T,4}$}

\begin{lemma}
	The horizontal Dirac operator $D^H$ anti-commutes with the vertical Dirac operator $D^N$ and the Clifford multiplication $\widehat{c}(V)$.
\end{lemma}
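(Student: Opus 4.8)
The plan is to argue inside the local model of Section~\ref{sec-bismut-lebeau}, where $D^H$, $D^N$ and $\widehat{c}(V)$ all act on $\pi^\ast(S^{TM}\otimes\wedge^\ast N)\to\mathcal{U}$, reducing the statement to two ingredients: a pointwise Clifford anticommutation relation, and a decoupling property of $D^H$ in the adapted trivialization of Remark~\ref{remk-local-connes-fibration}.

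First I would record the Clifford relation. On $\wedge^\ast N$ there are the two anticommuting actions $c(e)\omega=e\wedge\omega-\iota_e\omega$ and $\widehat{c}(e)\omega=e\wedge\omega+\iota_e\omega$, so that $c(e)\widehat{c}(e')+\widehat{c}(e')c(e)=0$ for all $e,e'$, and the $c(f_j),\widehat{c}(f_j)$ with $j>m$ are odd operators acting only on the $\wedge^\ast N$ factor, whereas $c_{\beta,\varepsilon}(f_i)$ for $i\leq m$ acts only on the $S^{TM}=S(\mathcal{F}\oplus\mathcal{F}_1^\perp)$ factor; in the $\mathbb{Z}/2$-graded tensor product $S^{TM}\otimes\wedge^\ast N$ odd operators on distinct factors anticommute, so $c_{\beta,\varepsilon}(f_i)$ anticommutes with every $c(f_j)$ and every $\widehat{c}(f_j)$, $i\leq m<j$. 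Since the connection ${}^0\nabla$ respects the splitting $T\mathcal{M}=\mathcal{F}\oplus\mathcal{F}_1^\perp\oplus\mathcal{F}_2^\perp$, and the normal frame is parallel along horizontal directions in the trivialization of Remark~\ref{remk-local-connes-fibration}, the connection part of $D^H$ commutes with the endomorphisms $c(f_j),\widehat{c}(f_j)$, $j>m$. Hence $D^H$ anticommutes with each $c(f_j)$ and each $\widehat{c}(f_j)$, $j>m$, and therefore with $\widehat{c}(V)=\sum_{j>m}Z_j\widehat{c}(f_j)$ (cf.\ \eqref{eq-local-V}).

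Then, for the decoupling, I would use — exactly as in the derivation of \eqref{eq-deformed-dirac} and the proof of Proposition~\ref{prop-DH-commute-PT} — that in this trivialization $D^H$ differentiates only in horizontal directions with coefficients pulled back from $M$, hence independent of the fibre variable $Z$; consequently $D^H$ commutes with each vertical derivative ${}^0\nabla_{f_j}$, $j>m$, and with multiplication by each $Z_j$ (horizontal vector fields annihilate the $Z_j$). Writing $D^N=\sum_{j>m}c(f_j){}^0\nabla_{f_j}$ and $\widehat{c}(V)=\sum_{j>m}Z_j\widehat{c}(f_j)$, every summand is a product of a factor anticommuting with $D^H$ and a factor commuting with $D^H$, e.g.
\[
D^H\bigl(c(f_j)\,{}^0\nabla_{f_j}\bigr)=-c(f_j)\,D^H\,{}^0\nabla_{f_j}=-c(f_j)\,{}^0\nabla_{f_j}\,D^H=-\bigl(c(f_j)\,{}^0\nabla_{f_j}\bigr)D^H,
\]
so each summand anticommutes with $D^H$; summing over $j$ gives $D^HD^N+D^ND^H=0$ and $D^H\widehat{c}(V)+\widehat{c}(V)D^H=0$.

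I expect the main obstacle to be the non-algebraic input above: verifying that in the trivialization of Remark~\ref{remk-local-connes-fibration} the normal frame $f_{m+1},\dots,f_n$ is parallel along horizontal directions and that $D^H$ is genuinely the $Z$-independent $\pi$-pullback of a Dirac-type operator on $M$ — that is, that every truly $Z$-dependent correction to the transported connection, and every term $c(\nabla_{f_i^H}f_j)$ produced by the normal frame, has been absorbed into the remainder $R_T=\mathcal{O}(|Z|^2\partial^N+|Z|\partial^H+|Z|)$ of \eqref{eq-deformed-dirac} rather than into $D^H$. This is implicit in the cited computations; once granted, the rest is formal, and the clean harmonic-oscillator form of $(D^N+T\widehat{c}(V))^2$ in \eqref{eq-square-deformed-dirac} serves as a consistency check, since it already reflects the vanishing of the mixed horizontal/vertical curvature terms that could otherwise obstruct the anticommutation.
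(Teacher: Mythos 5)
Your Clifford-algebra bookkeeping — that $c_{\beta,\varepsilon}(f_i)$ for $i\leq m$ anticommutes with $c(f_j)$ and $\widehat{c}(f_j)$ for $j>m$ in the graded tensor product — is correct and is also where the paper starts. The gap is in the geometric decoupling you then invoke: it is \emph{not} true that the normal frame $f_{m+1},\dots,f_n$ is parallel along horizontal directions, nor that $D^H$ commutes with the vertical derivatives ${}^0\nabla_{f_j}$ or with multiplication by the coordinates $Z_j$. The horizontal lift is $f_i^H = f_i - \nabla^N_{f_i}Z$, so $[f_i^H,f_j]=\nabla^N_{f_i}f_j$ and $f_i^H(Z_j)$ equals minus the $f_j$-component of $\nabla^N_{f_i}Z$; both are nonzero unless the connection coefficients of $\nabla^N$ vanish identically in the chosen frame, which cannot be arranged since $\nabla^N$ is induced from the Levi--Civita connection and is not flat (Remark~\ref{remk-local-connes-fibration} trivializes $\mathcal{F}_1^\perp$, it does not flatten $\nabla^N$; only $|Z|$, not each $Z_j$, is preserved under horizontal transport). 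Consequently your factorization of each summand of $D^N$ and $\widehat{c}(V)$ into an anticommuting Clifford factor times a factor commuting with $D^H$ breaks down.

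The actual content of the lemma is that the nonzero cross terms cancel. For $D^N$, computing $D^HD^N+D^ND^H$ leaves
\[
\sum_{\substack{1\leq i\leq m\\ m+1\leq j\leq n}} c_{\beta,\varepsilon}(f_i)\,c(\nabla^N_{f_i}f_j)\,{}^0\nabla_{f_j}
\;+\;
c_{\beta,\varepsilon}(f_i)\,c(f_j)\,{}^0\nabla_{[f_i^H,f_j]},
\]
and these two sums cancel because $[f_i^H,f_j]=\nabla^N_{f_i}f_j$ and the coefficients $\langle \nabla^N_{f_i}f_j,f_k\rangle$ are antisymmetric in $j,k$ ($\nabla^N$ being a metric connection); equivalently, one can evaluate at an arbitrary point in a frame synchronous at that point, where both correction terms vanish there, and use that the anticommutator is frame-independent. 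For $\widehat{c}(V)$ the correct input is the covariant statement $\nabla^{\pi^\ast N}_{f_i^H}Z=0$ (the tautological section is parallel along horizontal lifts), which kills the combination $\sum_j f_i^H(Z_j)\widehat{c}(f_j)+Z_j\widehat{c}(\nabla^N_{f_i}f_j)$ even though neither term vanishes separately. So your conclusion is right, but the step you yourself flagged as ``implicit in the cited computations'' is false as you state it, and supplying the cancellation is precisely the paper's proof.
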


\begin{proof}
On one hand,
\begin{multline*}
	D^H D^N  = \\
	 \sum_{\substack{1\leq i\leq m \\ m+1\leq j\leq n}} c_{\beta,\varepsilon}(f_i) \left(c(\nabla_{f_i}^N f_j) +c(f_j) {}^0\nabla_{f_i^H}^{\pi^\ast (S^{TM}\otimes \wedge^\ast N),\beta,\varepsilon}\right) {}^0\nabla_{f_j}^{\pi^\ast (S^{TM}\otimes \wedge^\ast N),\beta,\varepsilon}.
\end{multline*}
On the other hand,
\begin{equation*}
D^N D^H 
= \sum_{\substack{1\leq i\leq m \\ m+1\leq j\leq n}} c(f_j)c_{\beta,\varepsilon}(f_i){}^0\nabla_{f_j}^{\pi^\ast (S^{TM}\otimes \wedge^\ast N),\beta,\varepsilon}{}^0\nabla_{f_i^H}^{\pi^\ast (S^{TM}\otimes \wedge^\ast N),\beta,\varepsilon}.
\end{equation*}
Their sum is equal to 
$$
\sum_{\substack{1\leq i\leq m \\ m+1\leq j\leq n}} c_{\beta,\varepsilon}(f_i) c(\nabla_{f_i}^N f_j) {}^0\nabla_{f_j}^{\pi^\ast (S^{TM}\otimes \wedge^\ast N),\beta,\varepsilon}+c_{\beta,\varepsilon}(f_i)c(f_j) {}^0\nabla_{[f_i^H, f_j]}^{\pi^\ast (S^{TM}\otimes \wedge^\ast N),\beta,\varepsilon}.
$$
Recall that the horizontal lifting of $f_i$ is given by $f_i^H(Z) = f_i-\nabla_{f_i}^N Z$. As a consequence, $[f_i^H, f_j] = [f_i-\sum_{m+1\leq k\leq n}z^k\nabla_{f_i}^N f_k,f_j] = \nabla_{f_i}^N f_j.$ This shows that $D^ND^H+D^HD^N=0$.

According to \eqref{eq-local-V}, $V=Z$ and 
 	\[
 	D^H \widehat{c}(Z) = \sum_{i=1}^mc_{\beta,\varepsilon}(f_i)\widehat{c}(\nabla_{f_i^H}^{\pi^\ast N} Z)+\sum_{i=1}^m c_{\beta,\varepsilon}(f_i) \widehat{c}(Z) {}^0\nabla_{f_i^H}^{\pi^\ast(S^{TM}\otimes \wedge^\ast N),\beta,\varepsilon}.
 	\]
 	As ${}^0\nabla_{f_i^H}^{\pi^\ast N} Z=0$ and $\widehat{c}$ anti-commute with $c$, we have $D^H$ anti-commute with $\widehat{c}(\nabla_Z V)$.
\end{proof}

\begin{proposition}\label{eq-estimate-of-a-t-4}
	\begin{enumerate}
		\item 
		For any compactly supported smooth function $\chi$ on $\mathcal{M}$ that is equal to $1$ on $\mathcal{U}$, there are constants $C>0$ and $T_0>0$ such that
		\[
		||A_{T,4}\chi s||_{L^2(\mathcal{M})} \geq C\left( ||\chi s||_{H^1}+T^{-1/2}||\chi s||_{L^2(\mathcal{M})}\right),
		\]
		for all $s\in H_{T,4}$ and for all $T>T_0$;
		\item
		The operator $A_{T,4}$ is invertible for large enough $T$.
	\end{enumerate}

\end{proposition}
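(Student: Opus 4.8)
The plan is to localise the estimate near $M$, where the harmonic--oscillator structure of Proposition~\ref{prop-harmonic-oscillator} applies, and to dispose of the complement using the term $T^2\ell^2$ in \eqref{eq-square-of-deformed-dirac}. Fix $\chi$ as in the statement and choose a cut--off $\widetilde\chi\in C^\infty_c(\mathcal M)$ supported in $\mathcal U$ with $\widetilde\chi\equiv 1$ near $\operatorname{supp}\rho$; after possibly shrinking $\mathcal U$ we may assume $|Z|$ is as small as we like on $\operatorname{supp}\widetilde\chi$. Write $\chi s=\widetilde\chi s+(\chi-\widetilde\chi)s$. As in the opening of the proof of Proposition~\ref{prop-off-diagonal-operators}, both summands lie in $H_{T,4}$; moreover $\operatorname{supp}(\chi-\widetilde\chi)$ is disjoint from $\operatorname{supp}\rho$, so $\overline p_T$ annihilates $(\chi-\widetilde\chi)s$ and $A_{T,4}$ agrees there with the elliptic operator $A_T$.

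For the near--$M$ part, set $\bar u=k^{1/2}\widetilde\chi s$; by \eqref{eq-formula-pt-bar} the isometry $k^{1/2}$ identifies $A_{T,4}\widetilde\chi s$ with $(1-p_T)\bar A_T\bar u$, where $\bar A_T=k^{1/2}A_Tk^{-1/2}=D^H+D^N+T\widehat c(V)+R_T$ on $\mathcal U$ and $p_T\bar u=0$. Using $[D^H,p_T]=0$ (Proposition~\ref{prop-DH-commute-PT}) and Lemma~\ref{lem-norm-p-t} together with the integration--by--parts computation behind \eqref{align-esimate-pt} one gets $\|(1-p_T)\bar A_T\bar u\|\ge\|\bar A_T\bar u\|-CT^{-1/2}\|\bar u\|_{H^1}$. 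The core estimate is then on $\|\bar A_T\bar u\|$: by the anti--commutation of $D^H$ with $D^N$ and with $\widehat c(V)$ (the Lemma preceding this Proposition) one has the orthogonal splitting $\|(D^H+D^N+T\widehat c(V))\bar u\|^2=\|D^H\bar u\|^2+\|(D^N+T\widehat c(V))\bar u\|^2$. Decompose $\bar u$ fibrewise as $\bar u_0+\bar u_1$ with $\bar u_0$ in the span of $\exp(-T|Z|^2/2)$ (Proposition~\ref{prop-harmonic-oscillator}); since $p_T\bar u=0$, the component $\bar u_0$ equals an integral of $\bar u$ against $(1-\rho)\exp(-T|Z|^2/2)$ and is therefore $\mathcal O(T^{N}e^{-cT})\|\bar u\|$ for every $N$, so $\|\bar u_1\|\ge\tfrac12\|\bar u\|$ and $\|(D^N+T\widehat c(V))\bar u\|^2\ge cT\|\bar u\|^2$ for large $T$; feeding in \eqref{eq-square-deformed-dirac} to recover the vertical derivatives modulo $T\|\bar u\|^2$, and combining with the horizontal ellipticity of $D^H$, yields $\|(D^H+D^N+T\widehat c(V))\bar u\|^2\gtrsim\|\bar u\|_{H^1}^2+T\|\bar u\|^2$. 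Finally $R_T=\mathcal O(|Z|^2\partial^N+|Z|\partial^H+|Z|)$ has first--order coefficients bounded by $C\sup_{\operatorname{supp}\widetilde\chi}|Z|$, so for $\mathcal U$ small $\|R_T\bar u\|$ is at most half of the previous quantity; hence $\|A_{T,4}\widetilde\chi s\|_{L^2}\gtrsim\|\widetilde\chi s\|_{H^1}+T^{-1/2}\|\widetilde\chi s\|_{L^2}$.

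For the remaining part, on $\operatorname{supp}(\chi-\widetilde\chi)\cup(\mathcal M\setminus\mathcal U)$ one has $\ell\ge\ell_0>0$, so \eqref{eq-square-of-deformed-dirac} and \eqref{eq-key-constant} give $A_T^2\ge D_{S(\mathcal F_2^\perp),\beta,\varepsilon}^2+T^2\ell_0^2-T\theta(\ell+\beta^{-1})\gtrsim T^2$ for large $T$; since $A_{T,4}=A_T$ there and $A_T$ is elliptic, the elliptic estimate of Section~\ref{sec-sobolev-elliptic} gives $\|A_{T,4}(\chi-\widetilde\chi)s\|_{L^2}\gtrsim\|(\chi-\widetilde\chi)s\|_{H^1}+T\|(\chi-\widetilde\chi)s\|_{L^2}$. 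Patching the two regions with a subordinate partition of unity, the commutator errors are $\mathcal O(\|\chi s\|_{L^2})$ and are absorbed by the main terms once $T$ is large, proving (1). For (2): the same argument applied with an arbitrary $\chi$ gives $\|A_{T,4}s\|_{L^2}\ge c_T\|s\|_{L^2}$ with $c_T>0$ for all $s\in H_{T,4}$ and $T$ large, so $A_{T,4}$ is injective with closed range; the accompanying $H^1$--bound and Rellich's lemma (Proposition~\ref{prop-rellich-with-boundary}) make the cokernel finite--dimensional near $M$, and self--adjointness of $A_{T,4}$ then forces surjectivity, hence invertibility.

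The main difficulty is the bookkeeping that makes the localisation rigorous on the non--compact $\mathcal M$: matching the global $A_T$ with its model form $D^H+D^N+T\widehat c(V)+R_T$, which is available only on $\mathcal U$; controlling the Gaussian tails outside $\operatorname{supp}\rho$; and checking that the $\mathcal O(1)$ commutator errors from the cut--offs are uniformly beaten by the harmonic--oscillator gap. All of this is the content of \cite[Chapter~9]{BismutLebeau91}; the only new input is that the extra curvature of the Connes fibration enters solely through the uniformly bounded quantities of Proposition~\ref{prop-key-estimates}.
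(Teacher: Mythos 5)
Your proposal is correct and follows essentially the same route as the paper: split $\mathcal M$ into a small neighborhood of $M$ and its complement, use the anti-commutation of $D^H$ with $D^N+T\widehat c(V)$ plus the harmonic-oscillator gap and elliptic regularity near $M$, use the $T^2\ell^2$ term and ellipticity of $A_T$ away from $M$, and pass from $A_T$ to $A_{T,4}$ via the $A_{T,2}$ bound of Proposition~\ref{prop-off-diagonal-operators}. Your explicit treatment of the discrepancy between the $\rho$-weighted Gaussian defining $p_T$ and the true kernel of $D^N+T\widehat c(V)$ is a point the paper glosses over, but it does not change the argument.
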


\begin{proof}
 Let $\delta$ be a sufficiently small positive constant to be determined. If $s$ is supported within $\delta$-neighborhood of $M$ inside $\mathcal{M}$, in this case, we take $s$ as an element of $L^2(\mathcal{U},\pi^\ast(S^{TM}\otimes \wedge^\ast N))$. Then
 \begin{equation}\label{eq-norm-estimate-at4}
 ||A_{T}s||_{L^2(\mathcal{U})}^2 \geq \frac{1}{2}||\left(D^H+D^N+T\widehat{c}(V)\right) s||^2_{L^2(\mathcal{U})}- ||R_Ts||_{L^2(\mathcal{U})}^2.
 \end{equation}
 According to the above Lemma, $(D^H+D^N+T\widehat{c}(V))^2 = D^{H,2}+(D^N+T\widehat{c}(V))^2$ and
\begin{equation}\label{eq-AT4-sum}
||\left(D^H+D^N+T \widehat{c}(V)\right)s||^2_{L^2(\mathcal{U})} = ||D^Hs||^2_{L^2(\mathcal{U})}+||(D^N+T\widehat{c}(V))s||^2_{L^2(\mathcal{U})}.
\end{equation}
Thanks to Proposition~\ref{prop-harmonic-oscillator}, there is a constant $C_1>0$ such that
\[
||(D^N+T\widehat{c}(V))s||^2_{L^2(\mathcal{U})}\geq TC_1 ||s||^2_{L^2(\mathcal{U})}.
\] 
On the other hand, the square of $D^N+T\widehat{c}(V)$ is computed in \eqref{eq-square-deformed-dirac}, there is a constant $C_2>0$ with
\[
||(D^N+T\widehat{c}(V))s||^2_{L^2(\mathcal{U})}\geq \langle D^{N,2} s, s\rangle_{L^2(\mathcal{U})}+T^2||\left|Z\right|s||^2_{L^2(\mathcal{U})}-TC_2||s||^2_{L^2(\mathcal{U})}.
\]
Altogether, one has
\begin{multline*}
||(D^N+T\widehat{c}(V))s||^2_{L^2(\mathcal{U})} \geq
\\
 \frac{1}{2}TC_1 ||s||^2_{L^2(\mathcal{U})}+\frac{\alpha}{2}\left(\langle D^{N,2} s, s\rangle_{L^2(\mathcal{U})}+T^2||\left|Z\right|s||^2_{L^2(\mathcal{U})}-TC_2||s||^2_{L^2(\mathcal{U})}\right),
\end{multline*}
for all $\alpha\in (0,1]$. Notice that $\alpha D^{N,2}/2+D^{H,2}$ is an elliptic differential operator on $\mathcal{M}$, according to the elliptic regularity \eqref{eq-elliptic-regularity}, there is a constant $C_3>0$ such that
\[
\langle \left(\frac{\alpha}{2}D^{N,2}+ D^{H,2} \right) s, s\rangle_{L^2(\mathcal{U})} \geq C_3||s||^2_{H^1}-||s||^2_{L^2(\mathcal{U})}.
\]
Overall, \eqref{eq-AT4-sum} can be estimated as 
\[
\geq C_3||s||^2_{H^1}+\left(\frac{1}{2}TC_1-1-\frac{\alpha}{2}TC_2\right)||s||^2_{L^2(\mathcal{U})}+\frac{\alpha}{2}T^2||\left|Z\right|s||^2_{L^2(\mathcal{U})}.
\]
The remaining part of \eqref{eq-norm-estimate-at4} can be estimated as follows: there is a constant $C_4>0$ that is independent of $\alpha$ and $\delta$ such that
\[
||R_Ts||^2_{L^2(\mathcal{U})}\leq C_4 \left( \delta^2||s||^2_{H^1}+\delta^2||s||^2_{L^2(\mathcal{U})} \right).
\]
Overall, we have
\begin{equation}\label{eq-at4-estimate}
||A_T s||^2_{L^2(\mathcal{U})}\geq \left(C_3-C_4\delta^2\right) ||s||^2_{H^1}+
\left(\frac{1}{2}TC_1-1-\frac{\alpha}{2}TC_2-C_4\delta^2\right)||s||^2_{L^2(\mathcal{U})}.
\end{equation}
By choosing $\alpha$ and $\delta$ sufficiently small, there are constants $C>0,b>0$ such that 
\[
||A_T s||^2_{L^2(\mathcal{U})}\geq C\left(||s||^2_{H^1}+(T-b)||s||^2_{L^2(\mathcal{U})}\right).
\]

If $s$ is supported away from $\delta$-neighborhood of $M\hookrightarrow \mathcal{M}$, then
\begin{align*}
||A_Ts||^2_{L^2(\mathcal{M})} = \langle \left( D_{S(\mathcal{F}_2^\perp),\beta,\varepsilon}^2+ T^2\ell^2 + T [D_{S(\mathcal{F}_2^\perp),\beta,\varepsilon}, \widehat{c}(V)]\right)s ,s \rangle_{L^2(\mathcal{M})}.
\end{align*}
According to \eqref{eq-key-constant}, for given smooth compactly supported function $\chi$, we have 
\begin{multline*}
||A_T (\chi s)||_{L^2(\mathcal{M})}^2 \geq \langle D_{S(\mathcal{F}_2^\perp),\beta,\varepsilon}^2 \chi s, \chi s\rangle_{L^2(\mathcal{M})} + 
\\
T^2||\ell\chi s||_{L^2(\mathcal{M})}^2-T\theta\beta^{-1}||\chi s||_{L^2(\mathcal{M})}^2-T\theta||\ell^{1/2}\chi s||_{L^2(\mathcal{M})}^2,
\end{multline*}
here the constant $\theta$ is the one used in \eqref{eq-key-constant}.
Again, as $D_{S(\mathcal{F}_2^\perp),\beta,\varepsilon}$ is an elliptic differential operator on $\mathcal{M}$, according to the elliptic regularity \eqref{eq-elliptic-regularity}, there are constants $C_5,C_6>0$ such that
\[
||A_T (\chi s)||_{L^2(\mathcal{M})}^2 \geq C_5 ||\chi s||_{H^1}^2+(T^2\delta^2-TC\beta^{-1}-1-TC_6)||\chi s||_{L^2(\mathcal{M})}^2.
\]
One can choose $T_1>0$ large enough such that 
\[
||A_T\chi s||^2_{L^2(\mathcal{M})}\geq C_5 \left(||\chi s||^2_{H^1}+(T-b)||\chi s||^2_{L^2(\mathcal{M})}\right),
\]
for all $T>T_1$.
Together with \eqref{eq-at4-estimate}, there is a constant $C_7>0$ such that
\begin{equation*}
||A_T\chi s||^2_{L^2(\mathcal{M})}\geq C_7 \left(||\chi s||^2_{H^1}+(T-b)||\chi s||^2_{L^2(\mathcal{M})}\right),
\end{equation*}
for all $s\in H_{T,4}$ and $T>T_0+T_1$.

Now for $s\in H_{T,4}$ we have $A_{T,4}s = A_T s-A_{T,2}s$. The first part of the proposition is a consequence of Proposition~\ref{prop-off-diagonal-operators}.

The second part is a direct consequence of the first part.
\end{proof}

\begin{proposition}
	The resolvent $(\lambda-A_{T,4})^{-1}$ is compact for sufficiently large $T$.
\end{proposition}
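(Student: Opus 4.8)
This is the analogue, for the compression $A_{T,4}$, of the fact that $A_T=D_{S(\mathcal{F}_2^\perp),\beta,\varepsilon}+T\widehat{c}(V)$ has compact resolvent --- established around \eqref{eq-kasparov-module-2} --- and the plan is to argue in the same spirit. Since $A_{T,4}$ is self-adjoint and, by Proposition~\ref{eq-estimate-of-a-t-4}, invertible for $T$ large, it suffices to show that the set of $u$ in the domain of $A_{T,4}$ with $||u||_{L^2}+||A_{T,4}u||_{L^2}\leq 1$ is precompact in $L^2$; compactness of $(\lambda-A_{T,4})^{-1}$ for every $\lambda$ in the resolvent set then follows from the resolvent identity. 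Such $u$ lie in $H^1_{\mathrm{loc}}(\mathcal{M})$ by elliptic regularity for the compression (compare \cite[Chapter~9]{BismutLebeau91}), so the goal is to obtain, uniformly over this set, an $H^1$-bound on compact subsets together with decay at infinity.

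For the interior bound I would use that for $u\in H_{T,4}$ the decomposition $A_Tu=A_{T,4}u+A_{T,2}u$ is orthogonal, so $||A_Tu||_{L^2}^2=||A_{T,4}u||_{L^2}^2+||A_{T,2}u||_{L^2}^2$. Fix a compactly supported $\chi$ equal to $1$ on $\mathcal{U}$; since $A_T$ is elliptic on all of $\mathcal{M}$ and $[A_T,\chi]=\sum_i c_{\beta,\varepsilon}(f_i)\,f_i(\chi)$ is bounded, Garding's inequality \eqref{eq-elliptic-regularity} applied to $\chi u$ gives $||u||_{H^1(\operatorname{supp}\chi)}\leq C(1+||A_Tu||_{L^2})$. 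Inserting $||A_Tu||_{L^2}^2\leq 1+||A_{T,2}u||_{L^2}^2$ and the bound $||A_{T,2}u||_{L^2}\leq C(T^{-1/2}||\chi u||_{H^1}+||\chi u||_{L^2})$ of Proposition~\ref{prop-off-diagonal-operators}, the term $T^{-1/2}||\chi u||_{H^1}$ can be absorbed into the left-hand side once $T$ is large, which gives a uniform bound on $||u||_{H^1(\operatorname{supp}\chi)}$, hence on $||A_Tu||_{L^2}$, and then --- applying \eqref{eq-elliptic-regularity} once more over an arbitrary compact $K$ --- a uniform bound $||u||_{H^1(K)}\leq C_K$.

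For the decay I would combine $||A_{T,4}u||_{L^2}^2=||A_Tu||_{L^2}^2-||A_{T,2}u||_{L^2}^2$ with the pointwise inequality $(D_{S(\mathcal{F}_2^\perp),\beta,\varepsilon}+T\widehat{c}(V))^2\geq D_{S(\mathcal{F}_2^\perp),\beta,\varepsilon}^2+\ell-C$ coming from \eqref{eq-square-of-deformed-dirac} and \eqref{eq-key-constant}. Since $D_{S(\mathcal{F}_2^\perp),\beta,\varepsilon}^2\geq 0$ this gives $\langle\ell u,u\rangle_{L^2}\leq ||A_Tu||_{L^2}^2+C\leq 1+||A_{T,2}u||_{L^2}^2+C$, which the previous paragraph bounds uniformly. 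Now $\ell$ is proper on $\mathcal{M}$: each fibre of $p:\mathcal{M}\to M$ is a complete simply connected manifold of non-positive curvature, so the fibrewise exponential map $(y,Z)\mapsto\exp_y Z$ is a diffeomorphism of the total space of $(N,g^{\mathcal{F}_2^\perp})$ onto $\mathcal{M}$ with $\ell(\exp_y Z)=|Z|$; hence $\{\ell\leq R\}$ is the image of the radius-$R$ disc bundle of $N\to M$, which is compact because $M$ is. Therefore $||u||_{L^2(\{\ell>R\})}^2\leq R^{-1}\langle\ell u,u\rangle_{L^2}\leq C/R$, so the set of such $u$ is tight.

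Finally I would assemble the pieces. Given $s_n$ with $||s_n||_{L^2}\leq 1$, the vectors $u_n=(\lambda-A_{T,4})^{-1}s_n$ satisfy $||u_n||_{L^2}+||A_{T,4}u_n||_{L^2}\leq C$, hence the uniform $H^1(K)$-bounds and the tightness above; by the Rellich lemma (Proposition~\ref{prop-rellich-with-boundary}) and a diagonal argument over a compact exhaustion of $\mathcal{M}$, a subsequence of $(u_n)$ converges in $L^2$ over every compact set, and tightness promotes this to convergence in $L^2(\mathcal{M})$. Thus $(\lambda-A_{T,4})^{-1}$ is compact. I expect the one genuinely delicate point to be the circular appearance of $||u||_{H^1}$ in the interior bound --- controlling $A_{T,2}u$ requires an $H^1$-estimate on $u$, which requires a bound on $A_Tu=A_{T,4}u+A_{T,2}u$ --- which closes precisely because of the $T^{-1/2}$ gain in Proposition~\ref{prop-off-diagonal-operators}, and this is exactly why $T$ must be taken large.
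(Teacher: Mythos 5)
Your argument is correct and rests on the same ingredients as the paper's proof: Rellich compactness on compact subsets via an interior $H^1$ bound, the $T^{-1/2}$ gain of Proposition~\ref{prop-off-diagonal-operators} to control the off-diagonal term, and decay at infinity forced by the potential $T\widehat{c}(V)$ through the properness of $\ell$. The paper packages this as norm-convergence of the compact operators $\chi_n(\lambda-A_{T,4})^{-1}$ rather than total boundedness of the graph-norm ball, and obtains the interior $H^1$ bound from Proposition~\ref{eq-estimate-of-a-t-4} instead of your Garding-plus-orthogonality absorption, but these are only cosmetic differences.
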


\begin{proof}
Let $\chi_n$ be a sequence of compactly supported smooth functions on $\mathcal{M}$ that equal $1$ on $\mathcal{U}$ whose support sets $\mathcal{M}_n$ are increasing and cover whole $\mathcal{M}$.
	We shall first prove that $\chi_n(\lambda-A_{T,4})^{-1}$ is a compact operator for all $n\in \mathbb{N}$ and then shows that $\chi_n(\lambda-A_{T,4})^{-1}$ norm converges to $(\lambda-A_{T,4})^{-1}$ as $n\to \infty$.
	
	Let $s\in H_{T,4}$, then according to the previous proposition there is a constant $C>0$ such that 
	\begin{align*}
	&||\chi_n (\lambda-A_{T,4})^{-1} s||_{H^1}\\
	 \leq &C||A_{T,4}\chi_n (\lambda-A_{T,4})^{-1} s||_{L^2}\\
	\leq &C||\chi_n A_{T,4}(\lambda-A_{T,4})^{-1} s||_{L^2}+C||[A_{T,4},\chi_n](\lambda-A_{T,4})^{-1} s||_{L^2}.
	\end{align*}
	Since the commutator $[A_T,\chi_n]$ is bounded and 
	\[
	[A_T,\chi_n] = 
	\begin{bmatrix}
		[A_{T,1},\chi_n] &[A_{T,2},\chi_n]\\
		[A_{T,3},\chi_n] &[A_{T,4},\chi_n]
	\end{bmatrix},
	\]
	the lower right corner $[A_{T,4},\chi]$ is a bounded operator.
	According to the Rellich lemma (Proposition~\ref{prop-rellich-with-boundary}), the operator $$\chi_n (\lambda-A_{T,4})^{-1}: H_{T,4}\to L^2(\mathcal{M}, S(\mathcal{F}\oplus \mathcal{F}_1^\perp)\otimes \wedge^\ast \mathcal{F}_2^\perp)$$ is compact.
	
	On the other hand, 
	\[
	A_T(\lambda-A_{T,4})^{-1}s = A_{T,2}(\lambda-A_{T,4})^{-1}s+A_{T,4}(\lambda-A_{T,4})^{-1}s
	\] 
	and according to Proposition~\ref{prop-off-diagonal-operators}, there is a constant 
	\[
	||A_{T,2}(\lambda-A_{T,4})^{-1}s||_{L^2(\mathcal{M})} \leq C\left(\frac{1}{\sqrt{T}}||\chi_n (\lambda-A_{T,4})^{-1}s||_{H^1}+||\chi_n(\lambda-A_{T,4})^{-1}s||_{L^2}\right).
	\]
	The estimate in the previous paragraph ensures that $A_{T,2}(\lambda-A_{T,4})^{-1}$ is a bounded operator. As a consequence,
	the operator $\chi_n (\lambda-A_{T,4})^{-1}$ can be decomposed as $$H_{T,4}\xrightarrow{(\lambda-A_{T,4})^{-1}} \operatorname{dom}(A_{T}) \xrightarrow{\chi_n} L^2(\mathcal{M}, S(\mathcal{F}\oplus \mathcal{F}_1^\perp)\otimes \wedge^\ast \mathcal{F}_2^\perp).$$ It suffices to show that $\operatorname{dom}(A_{T}) \xrightarrow{\chi_n} L^2$ norm-converges to the identity operator. Indeed, there is a sequence of constants $\{C_n\}_{n\in \mathbb{N}}$ that goes to positive infinity and 
	$$
	(D_{S(\mathcal{F}_2^\perp),\beta,\varepsilon}+T\widehat{c}(V))^2\geq C_n
	$$ 
	outside $\mathcal{M}_n$. 
	If $u$ is a unit vector in the domain of $D_{S(\mathcal{F}_2^\perp),\beta,\varepsilon}+T\widehat{c}(V)$, that is $u$ satisfies 
	\[
	||u||_{L^2}^2+||\left(D_{S(\mathcal{F}_2^\perp),\beta,\varepsilon}+T\widehat{c}(V)\right)u||_{L^2}^2=1.
	\]
	Then 
	\[
	1\geq \int \langle\left(D_{S(\mathcal{F}_2^\perp),\beta,\varepsilon}+T\widehat{c}(V)\right)^2u(m),u(m)\rangle dm.
	\]
	As a consequence
	\[
	\int_{\mathcal{M}\backslash \mathcal{M}_n} \langle u(m),u(m)\rangle dm \leq 1/C_n.
	\]
	In another word, $||(1-\chi_n)u||_{L^2}^2\leq 1/C_n$. This completes the proof.
	
\end{proof}

\section{Equivalent Kasparov module}\label{sec-equivalent-kasparov}
Let 
$
A^\prime_T
=
\begin{bmatrix}
	A_{T,1} & 0 \\
	0 & A_{T,4}
\end{bmatrix}.
$
\begin{proposition}\label{prop-simplified-module}
	The pair $[L^2(\mathcal{M}, S(\mathcal{F}\oplus \mathcal{F}_1^\perp)\otimes \wedge^\ast \mathcal{F}_2^\perp), A^\prime_T(1+A^{\prime,2}_T)^{-1/2}]$ is a Kasparov $(C(M), \mathbb{C})$-module.
\end{proposition}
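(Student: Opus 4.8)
The plan is to verify, for the bounded self-adjoint operator $F:=A'_T(1+(A'_T)^2)^{-1/2}=F(A'_T)$ on $\mathcal{H}:=L^2(\mathcal{M},S(\mathcal{F}\oplus\mathcal{F}_1^\perp)\otimes\wedge^\ast\mathcal{F}_2^\perp)$, the three defining conditions of a Kasparov $(C(M),\mathbb{C})$-module, where $C(M)$ acts by multiplication through $C(M)\to C(\mathcal{M})$ and $T$ is taken large enough that the results of Section~\ref{sec-bismut-lebeau} apply. Since $A_T$ is self-adjoint and $\overline{p}_T$ is an orthogonal projection, the diagonal blocks $A_{T,1}$ and $A_{T,4}$ are self-adjoint and odd (as already used in Section~\ref{sec-bismut-lebeau}), hence so is $A'_T$, and $F=F(A_{T,1})\oplus F(A_{T,4})$ is bounded, self-adjoint and odd; in particular $F-F^\ast=0$, which settles the first condition. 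For the remaining two I would first record the key point: each $f\in C(M)$, viewed as a multiplication operator on $\mathcal{H}$, commutes with $\overline{p}_T$. This is immediate from the explicit formula \eqref{eq-formula-of-pt} — $f$ is a scalar function of the base variable, so it commutes with the fiberwise projection $q$ and passes through the $Z'$-integral, giving $[f,p_T]=0$, and then $[f,\overline{p}_T]=0$ since $\overline{p}_T=k^{-1/2}p_Tk^{1/2}$ and $f$ commutes with multiplication by $k^{\pm1/2}$. Consequently $f$ is block-diagonal for $\mathcal{H}=\operatorname{im}(\overline{p}_T)\oplus H_{T,4}$, acting on $\operatorname{im}(\overline{p}_T)\cong L^2(M,S^{TM})$ as multiplication by $f|_M$ under the identification \eqref{eq-iso-incl}. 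Hence both $f$ and $F$ are block-diagonal, so
\[
F^2-1=-\big[(1+A_{T,1}^2)^{-1}\oplus(1+A_{T,4}^2)^{-1}\big],\qquad [f,F]=[f,F(A_{T,1})]\oplus[f,F(A_{T,4})].
\]

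On the $A_{T,1}$-block, $M$ is compact and $A_{T,1}$ is elliptic for large $T$ (Proposition~\ref{prop-estimate-a1}), so $(1+A_{T,1}^2)^{-1}$ and $[f,F(A_{T,1})]$ are compact — these are precisely the assertions, established just above in Section~\ref{sec-bismut-lebeau}, that $[L^2(M,S^{TM}),F(A_{T,1})]$ is a Kasparov module. On the $A_{T,4}$-block, $(1+A_{T,4}^2)^{-1}$ is compact because $A_{T,4}$ has compact resolvent (the last proposition of Section~\ref{sec-bismut-lebeau}), so $F(A_{T,4})^2-1$ is compact; together with the displayed identity this gives that $F^2-1$, and hence $f(F^2-1)$, is compact. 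It then remains only to show $[f,F(A_{T,4})]$ is compact.

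For that I would use the integral representation $F(A_{T,4})=\frac{2}{\pi}\int_0^\infty A_{T,4}(1+\lambda^2+A_{T,4}^2)^{-1}d\lambda$ together with $[f,A_{T,4}]=\overline{p}_T^\perp[f,A_T]\overline{p}_T^\perp=\overline{p}_T^\perp\big(\sum_i c_{\beta,\varepsilon}(f_i)f_i(f)\big)\overline{p}_T^\perp$, which by \eqref{eq-commutator-f-sub-dirac} is a bounded operator when $f$ is smooth, the general case $f\in C(M)$ following by uniform approximation by smooth functions (the map $f\mapsto[f,F(A_{T,4})]$ being norm-continuous). Expanding the commutator of $f$ with the integrand by means of $[f,(1+\lambda^2+A_{T,4}^2)^{-1}]=-(1+\lambda^2+A_{T,4}^2)^{-1}[f,A_{T,4}^2](1+\lambda^2+A_{T,4}^2)^{-1}$ and $[f,A_{T,4}^2]=[f,A_{T,4}]A_{T,4}+A_{T,4}[f,A_{T,4}]$ yields a finite sum of terms, each containing a factor $(1+\lambda^2+A_{T,4}^2)^{-1}$, which is compact of norm $\mathcal{O}((1+\lambda^2)^{-1})$, while the remaining factors — built from $[f,A_{T,4}]$, $A_{T,4}(1+\lambda^2+A_{T,4}^2)^{-1}$ and $A_{T,4}^2(1+\lambda^2+A_{T,4}^2)^{-1}$ — are bounded uniformly in $\lambda$ by spectral calculus. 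Hence the integrand is compact of norm $\mathcal{O}((1+\lambda^2)^{-1})$, the integral converges in operator norm to a compact operator, and $[f,F(A_{T,4})]$ is compact.

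The only place where anything must actually be observed, rather than quoted, is the commutation $[f,\overline{p}_T]=0$; once it is in hand the proof is an assembly of facts already available — the Kasparov property of $A_{T,1}$, the compact resolvent of $A_{T,4}$, and the standard integral-formula commutator estimate — so I do not anticipate a serious obstacle, the one point demanding care being the passage from smooth to merely continuous $f$ in the final step.
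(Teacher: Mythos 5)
Your proposal is correct and follows essentially the same route as the paper: compactness of $f(1+A_T'^{\,2})^{-1}$ from the compact resolvent of $A_{T,1}$ (ellipticity on compact $M$) and of $A_{T,4}$, and compactness of the commutator via the observation that $f\in C(M)$ commutes with $\overline{p}_T$, so that $[f,A_{T,4}]=\overline{p}_T^\perp[f,D_{S(\mathcal{F}_2^\perp),\beta,\varepsilon}]\overline{p}_T^\perp$ is bounded and the integral formula \eqref{eq-F-integral} does the rest. You merely spell out the block-diagonal bookkeeping and the smooth-to-continuous approximation more explicitly than the paper does.
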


\begin{proof}
	Since $A^\prime_T$ is self-adjoint, it is enough to show that for every $f\in C(M)$, $[A^\prime_T(1+A^{\prime,2}_T)^{-1/2},f]$ and $f(1+A^{\prime,2}_T)^{-1}$ are compact. According to the previous discussion, $A^\prime_T$ has compact resolvent for large enough $T$, the compactness of $f(1+A^{\prime,2}_T)^{-1}$ follows from here.  
	
	As for the compactness of $[A^\prime_T(1+A^{\prime,2}_T)^{-1/2},f]$, it is enough to show the boundedness of $[f, A_{T,4}]$. Since $f\in C(M)$ taken as a function on $\mathcal{M}$ is constant along the fiber of $\mathcal{M}\to M$, it commutes with the projection $\overline{p}_T$. As a consequence
	\[
	[f, A_{T,4}] = \overline{p}_T^\perp [f,D_{S(\mathcal{F}_2^\perp),\beta,\varepsilon}]\overline{p}_T^\perp,
	\]
	which is, according to \eqref{eq-commutator-f-sub-dirac}, clearly bounded. This completes the proof.
\end{proof}

We shall use he following result of Connes and Skandalis to simplify the Kasparov modules.
\begin{proposition}
	Let $(E,F)$ and $(E^\prime, F^\prime)$ be Kasparov $(A,B)$-modules, if $\phi(a)[F,F^\prime]\phi(a^\ast)\geq 0$ mod compact operators then these two Kasparov modules define the same $KK$ element. In particular, if $||F-F^\prime||\leq 1$ mod compact operators then $(E,F)$ and $(E^\prime, F^\prime)$ define the same $KK$ element.\qed
\end{proposition}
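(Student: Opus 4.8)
The plan is to reduce the statement to an \emph{operator homotopy} between Kasparov modules and to realise that homotopy by the straight‑line path $F_t=(1-t)F+tF^\prime$. First I would clear away the routine reductions: replacing $(E^\prime,F^\prime)$ by a unitarily equivalent representative we may assume $E=E^\prime$ (so that the graded commutator $[F,F^\prime]=FF^\prime+F^\prime F$ — self‑adjoint because $F,F^\prime$ are odd — makes sense), and after the usual compact perturbations we may assume $F=F^{*}$ and $F^\prime=(F^\prime)^{*}$; neither move changes the class in $KK(A,B)$. For $F_t=(1-t)F+tF^\prime$ the requirements $[\phi(a),F_t]\in\mathcal K(E)$ and $\phi(a)(F_t-F_t^{*})\in\mathcal K(E)$ hold by linearity, so the whole matter is the control of $\phi(a)(F_t^{2}-1)\phi(a^{*})$.

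The key computation is that, since $\phi(a)(F^{2}-1)\phi(a^{*})$ and $\phi(a)((F^\prime)^{2}-1)\phi(a^{*})$ lie in $\mathcal K(E)$,
\[
\phi(a)(F_t^{2}-1)\phi(a^{*})\;\equiv\;t(1-t)\,\phi(a)\bigl([F,F^\prime]-2\bigr)\phi(a^{*})\pmod{\mathcal K(E)}.
\]
On one side the algebraic identity $[F,F^\prime]=F^{2}+(F^\prime)^{2}-(F-F^\prime)^{2}$ gives $[F,F^\prime]\le F^{2}+(F^\prime)^{2}$, so the right‑hand side is $\le 0$ modulo $\mathcal K(E)$; on the other side the hypothesis $\phi(a)[F,F^\prime]\phi(a^{*})\ge 0$ modulo $\mathcal K(E)$ together with $t(1-t)\le\tfrac14$ makes it $\ge-\tfrac12\,\phi(aa^{*})$ modulo $\mathcal K(E)$. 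Thus all along the path $\phi(a)F_t^{2}\phi(a^{*})$ stays, modulo compacts, squeezed between $\tfrac12\phi(aa^{*})$ and $\phi(aa^{*})$, so it never degenerates. This two‑sided estimate is precisely what is needed to run the standard normalization of Kasparov theory on $t\mapsto F_t$ (e.g.\ adjusting $F_t$ within its own functional calculus, or a doubling construction absorbing the defect $\mathbf 1-F_t^{2}$), producing a genuine norm‑continuous path of Kasparov $(A,B)$‑modules from $(E,F)$ to $(E,F^\prime)$; as operator‑homotopic modules define the same $KK$‑element, $[E,F]=[E,F^\prime]$. The one delicate point I anticipate is exactly this normalization step — upgrading the ``generalized'' path $F_t$, for which $\phi(a)(F_t^{2}-1)\phi(a^{*})$ is merely negative‑plus‑compact rather than compact, to an honest Kasparov path — and this is where the positivity hypothesis enters essentially; being standard, in the write‑up I would invoke it (e.g.\ from \cite{Blackadar98}) rather than reprove it.

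For the last sentence, assume $\|F-F^\prime\|\le 1$ modulo $\mathcal K(E)$ and write $F-F^\prime=G+K$ with $G=G^{*}$, $\|G\|\le 1$, $K\in\mathcal K(E)$. Then $(F-F^\prime)^{2}\equiv G^{2}$ modulo $\mathcal K(E)$ and $G^{2}\le 1$, whence
\[
\phi(a)[F,F^\prime]\phi(a^{*})=\phi(a)\bigl(F^{2}+(F^\prime)^{2}-(F-F^\prime)^{2}\bigr)\phi(a^{*})\;\equiv\;\phi(a)\bigl(2-G^{2}\bigr)\phi(a^{*})\;\ge\;\phi(aa^{*})\;\ge\;0
\]
modulo $\mathcal K(E)$, so the hypothesis of the main assertion is met and the two modules represent the same class.
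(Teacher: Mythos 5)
The paper does not actually prove this proposition: it is quoted as a known result of Connes and Skandalis and the \verb|\qed| is immediate, so there is no in-text argument to compare yours against. What you have written is the standard proof from the literature (Skandalis's remarks on Kasparov theory; \cite[17.2.7]{Blackadar98}), and it is essentially correct. Your central computation checks out: with $F_t=(1-t)F+tF'$ one has
\begin{equation*}
F_t^2-1=(1-t)^2(F^2-1)+t^2(F'^2-1)+t(1-t)\bigl([F,F']-2\bigr),
\end{equation*}
and compressing by $\phi(a)\cdot\phi(a^*)$ kills the first two terms modulo compacts, giving exactly your congruence; the hypothesis then yields $\phi(a)F_t^2\phi(a^*)\geq \tfrac12\phi(aa^*)$ modulo compacts along the whole path. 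The derivation of the second statement from the first via $[F,F']=F^2+F'^2-(F-F')^2$ and a self-adjoint norm-one lift of $F-F'$ is also correct (in fact $\|F-F'\|\leq\sqrt{2}$ modulo compacts would already suffice). Two remarks. First, the upper bound $\phi(a)(F_t^2-1)\phi(a^*)\leq 0$ is true but not needed; only the lower bound enters the normalization lemma, so presenting the two-sided estimate as "precisely what is needed" slightly misstates where the weight falls. Second, and more importantly, essentially all of the analytic content of the proposition lives in the step you defer: upgrading a path of "sub-Kasparov" modules satisfying $\phi(a)(F_t^2-1)\phi(a^*)\geq -c\,\phi(aa^*)$ with $c<1$ to an honest operator homotopy. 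You are right to identify this as the delicate point, and citing it (it is \cite[17.2.5]{Blackadar98} together with the proof of 17.2.7) is legitimate here --- the paper itself cites the entire proposition --- but be aware that your write-up is then a reduction to that lemma rather than a self-contained proof.
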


\begin{proposition}
When $T$ is sufficiently large we have 
\[
||A_T(1+A_T^2)^{-1/2}-A_T^\prime(1+A_T^{\prime,2})^{-1/2}||\leq 1
\] 
mod compact operators.
\end{proposition}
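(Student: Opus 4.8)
The plan is to estimate $F(A_T)-F(A^\prime_T)$ directly from the integral formula \eqref{eq-F-integral}, exploiting that the difference $S:=A_T-A^\prime_T=\overline{p}_TA_T(1-\overline{p}_T)+(1-\overline{p}_T)A_T\overline{p}_T$ is exactly the off-diagonal part of $A_T$ with respect to $\overline{p}_T$, which is small by Proposition~\ref{prop-off-diagonal-operators} and Remark~\ref{rem-decomp-A-2-3}. Applying \eqref{eq-F-integral} to both $A_T$ and $A^\prime_T$, peeling off $S$ by the resolvent identity, integrating in $\lambda$, and using $\frac{2}{\pi}\int_0^\infty(1+\lambda^2+A_T^2)^{-1}\,d\lambda=(1+A_T^2)^{-1/2}$, one arrives at
\[
F(A_T)-F(A^\prime_T)\;=\;S(1+A_T^2)^{-1/2}\;+\;A^\prime_T\big[(1+A_T^2)^{-1/2}-(1+A_T^{\prime,2})^{-1/2}\big].
\]
I will show each summand has operator norm $\mathcal{O}(T^{-1/2})$; for $T$ large this is $\leq 1$, which in particular gives the claimed bound modulo compact operators.

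For the first summand I use the splitting $S=S_b+R$ of Remark~\ref{rem-decomp-A-2-3}, where $S_b$ is bounded with $\|S_b\|=\mathcal{O}(T^{-1/2})$ (this is what \eqref{align-esimate-pt} actually proves) and $R$ satisfies $\|Rs\|_{L^2(\mathcal{M})}\leq CT^{-1/2}\big(\|\chi s\|_{H^1}+\|\chi s\|_{L^2(\mathcal{M})}\big)$ for a fixed cutoff $\chi\equiv 1$ on $\mathcal{U}$. Since $\|A_T(1+A_T^2)^{-1/2}\|\leq 1$, the elliptic estimate \eqref{eq-elliptic-regularity} for the elliptic operator $A_T=D_{S(\mathcal{F}_2^\perp),\beta,\varepsilon}+T\widehat{c}(V)$ on $\mathcal{M}$ gives $\|\chi(1+A_T^2)^{-1/2}u\|_{H^1}\leq C\|u\|_{L^2}$, hence $\|R(1+A_T^2)^{-1/2}\|=\mathcal{O}(T^{-1/2})$ and so $\|S(1+A_T^2)^{-1/2}\|=\mathcal{O}(T^{-1/2})$. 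The same argument bounds $\|S(t+1+A_T^2)^{-1}\|$, $\|S(t+1+A_T^{\prime,2})^{-1}\|$ and $\|S(t+1+A_T^2)^{-1/2}\|$ by $\mathcal{O}(T^{-1/2})$ times $(t+1)^{-1/2}$, and these serve as the building blocks for the second summand.

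For the second summand I use the subordination formula $X^{-1/2}=\frac{1}{\pi}\int_0^\infty(t+X)^{-1}t^{-1/2}\,dt$ with $X=1+A_T^2$ and $X=1+A_T^{\prime,2}$, together with $A_T^{\prime,2}-A_T^2=-(A^\prime_TS+SA^\prime_T+S^2)$, to write
\[
A^\prime_T\big[(1+A_T^2)^{-1/2}-(1+A_T^{\prime,2})^{-1/2}\big]=-\frac{1}{\pi}\int_0^\infty A^\prime_T(t+1+A_T^2)^{-1}\big(A^\prime_TS+SA^\prime_T+S^2\big)(t+1+A_T^{\prime,2})^{-1}t^{-1/2}\,dt.
\]
Each resolvent product is then estimated from the elementary bounds $\|A_T^j(t+1+A_T^2)^{-1}\|\leq C_j(t+1)^{(j-2)/2}$ for $j=0,1,2$ (and the analogues with $A^\prime_T$), the off-diagonal bounds of the previous paragraph, the anticommutation $D^H(D^N+T\widehat{c}(V))+(D^N+T\widehat{c}(V))D^H=0$ proved above (which, after the localization near $M$, simplifies the relevant squares), and the algebraic identities $A_T(t+1+A_T^2)^{-1}A_T=1-(t+1)(t+1+A_T^2)^{-1}$ and its $A^\prime_T$-analogue. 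The subtle point is that term-by-term triangle-inequality bounds only yield $(t+1)^{-1/2}$ decay, which is not integrable against $t^{-1/2}\,dt$; the remedy is to recombine the $A_T$- and $A^\prime_T$-resolvents using these identities, and, wherever a single factor survives, to pass to the closed form of the resulting (conditionally convergent) sub-integral rather than integrating norms. This produces $\|A^\prime_T[(1+A_T^2)^{-1/2}-(1+A_T^{\prime,2})^{-1/2}]\|=\mathcal{O}(T^{-1/2})$ and finishes the proof.

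The main obstacle is precisely this convergence bookkeeping. Unlike the largely self-contained estimates of Sections~\ref{sec-kasparov-product}--\ref{sec-bismut-lebeau}, one cannot simply bound the integrand in norm and integrate: although $S$ is $\mathcal{O}(T^{-1/2})$ relative to the $H^1$-norm, it is of the same order of magnitude as $A_T$ itself (which behaves like $\sqrt{T}$ on the high-energy part of its spectrum), so the decay needed to close the $\lambda$- and $t$-integrals comes only from the cancellation between the two functional calculi — which differ solely through $S$ — and must be extracted with care. Everything else (the elementary resolvent inequalities, the $S=S_b+R$ estimates, the elliptic regularity of $A_T$) is routine given the results of Sections~\ref{sec-sobolev-elliptic}--\ref{sec-bismut-lebeau}.
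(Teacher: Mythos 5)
Your opening reduction is fine, but the proof has a genuine gap exactly at the point you flag as "convergence bookkeeping," and the gesture you make at a fix is not an argument. For the second summand, after subordination you must control
\[
\int_0^\infty A^\prime_T\,(t+1+A_T^2)^{-1}\bigl(SA^\prime_T+A^\prime_TS+S^2\bigr)(t+1+A_T^{\prime,2})^{-1}\,t^{-1/2}\,dt ,
\]
and, as you note, the best termwise bounds are $\mathcal{O}\bigl(T^{-1/2}(t+1)^{-1/2}\bigr)$, which is not integrable against $t^{-1/2}dt$. The proposed remedy --- ``recombine the resolvents and pass to the closed form of the conditionally convergent sub-integral'' --- cannot be carried out as stated: the factor $S$ sits \emph{between} a resolvent of $A_T$ and a resolvent of $A^\prime_T$, so the identities $A_T(t+1+A_T^2)^{-1}A_T=1-(t+1)(t+1+A_T^2)^{-1}$ never apply across $S$, and no closed form is available. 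The extra decay really does come only from the interplay of the two spectra, and the way the paper extracts it is to expand in the two eigenbases $\{x_i\}$ of $A^\prime_T$ and $\{y_j\}$ of $A_T$: the matrix coefficient of the difference is $\langle Rx_i,y_j\rangle$ times the \emph{scalar} integral $\int(1+\lambda^2+b_j^2)^{-1}(1+\lambda^2-a_ib_j)(1+\lambda^2+a_i^2)^{-1}d\lambda$ (see \eqref{align-calkin-norm}), which is computed to decay like $(1+|a_i|)^{-1}$ and thereby absorbs the $H^1$-growth $\|Rx_i\|\lesssim T^{-1/2}(a_i+1)$. Your operator-level scheme has no substitute for this compensation mechanism, so the estimate of the second summand is not established.

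A secondary but real problem is your claim $\|\chi(1+A_T^2)^{-1/2}u\|_{H^1}\leq C\|u\|_{L^2}$ with $C$ independent of $T$ ``by the elliptic estimate \eqref{eq-elliptic-regularity} for $A_T$.'' The constant in G\aa{}rding's inequality for $A_T=D_{S(\mathcal{F}_2^\perp),\beta,\varepsilon}+T\widehat{c}(V)$ depends on the operator, and the zeroth-order term has sup-norm of order $T$ on $\operatorname{supp}\chi$; even using the Lichnerowicz-type identity \eqref{eq-square-of-deformed-dirac} one only gets $\|Du\|\leq\|A_Tu\|+C\sqrt{T}\|u\|$, which destroys the claimed $\mathcal{O}(T^{-1/2})$ bound for $\|R(1+A_T^2)^{-1/2}\|$. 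A $T$-uniform $H^1$ bound is available, but it must be derived from the $T$-uniform coercivity of the diagonal blocks (Propositions~\ref{prop-estimate-a1} and \ref{eq-estimate-of-a-t-4} give $\|\chi s\|_{H^1}\lesssim\|A_{T,1}\overline{p}_Ts\|+\|A_{T,4}\chi\overline{p}_T^{\perp}s\|$), which is how the paper proceeds; you should route the first-summand estimate through those propositions rather than through \eqref{eq-elliptic-regularity}. With that repair the first summand is fine, but the second summand remains unproved in your approach.
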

\begin{proof}
	Indeed,
	\begin{align*}
	&A_T(1+A_T^2)^{-1/2}-A_T^\prime(1+A_T^{\prime,2})^{-1/2}\\
	=& 
	\int (1+\lambda^2+A_T^2)^{-1} \left((1+\lambda^2) (A_T-A^\prime_T)+ A_T(A^\prime_T-A_T)A^\prime_T\right)(1+\lambda^2+A_T^{\prime,2})^{-1} d\lambda.
	\end{align*}
	As we are estimating the norm mod compact operators, the bounded part of the difference $A_T-A_T^\prime$ make no contribution to the norm and, according to the Remark~\ref{rem-decomp-A-2-3}, one may replace $A_T-A_T^\prime$ by 
	$R=
	\begin{bmatrix}
	0 & p_T R_T p_T^\perp \\
	p_T^\perp R_T p_T& 0	
	\end{bmatrix}.
	$
	Let $\{x_i\}$ and $\{y_j\}$ be two sets of orthonormal bases of $L^2(\mathcal{M},   S(\mathcal{F}\oplus \mathcal{F}^\perp_1)\otimes \wedge^\ast \mathcal{F}_2^\perp)$ given by the eigenvectors of $A^\prime_T$ and $A_T$ respectively, the corresponding eigenvalues are denoted by $a_i\in \mathbb{R}$ and $b_j\in \mathbb{R}$ respectively. Then 
	\begin{align}\label{align-calkin-norm}
	&\left\langle \left(A_T(1+A_T^2)^{-1/2}-A_T^\prime(1+A^{\prime,2}_T)^{-1/2} \right)x_i, y_j \right\rangle \\  
	=&  \langle R x_i, y_j\rangle\int (1+\lambda^2+b_j^2)^{-1}(1+\lambda^2-a_ib_j)(1+\lambda^2+a_i^2)^{-1} d\lambda \nonumber
	\end{align}
	mod compact operators.
	According to the Remark~\ref{rem-decomp-A-2-3}, for fixed $\chi\in C^\infty_c(\mathcal{M})$ that equals $1$ on $\mathcal{U}$, there is a constant $C>0$ such that 
	$$|\langle R x_i, y_j\rangle|\leq ||Rx_i||_{L^2}\leq CT^{-1/2}\left(||\chi x_i||_{H^1}+1\right).$$ 
	According to Proposition~\ref{prop-estimate-a1} and Proposition~\ref{eq-estimate-of-a-t-4}, there is a constant $C_1$ that is independent of $T$ such that
	\begin{align*}
	||\chi x_i||_{H^1} & = ||\overline{p}_Tx_i||_{H^1}+||\chi \overline{p}_T^\perp x_i||_{H^1}\\
	&\leq C_1\left(||A_{T,1}\overline{p}_Tx_i||_{L^2}+||A_{T,4}\chi \overline{p}_T^\perp x_i||_{L^2}\right) \\
	&\leq C_1\left(||A_{T,1}p_Tx_i||_{L^2}+||A_{T,4} p_T^\perp x_i||_{L^2}+||[A_{T,4},\chi]p_T^\perp x_i||_{L^2}\right).
	\end{align*}
	Notice that the sum of the first two terms in the third line equals $||A_T^\prime x_i||_{L^2}=a_i$ and the last term in the third line is less than or equal to some constant that independent of $T$.
	As a consequence, the absolute value of \eqref{align-calkin-norm} is controlled by
	\[
	 C(a_i+1)\int (1+\lambda^2+b_j^2)^{-1}(1+\lambda^2-a_ib_j)(1+\lambda^2+a_i^2)^{-1} d\lambda \cdot T^{-1/2}.
	\]
	The constant before $T^{-1/2}$ is uniformly bounded with respect to real numbers $a_i, b_j$. As $T\to \infty$, one can make this value less than $1$. This completes the proof.
	
\end{proof}

Therefore, the Kasparov module \eqref{eq-kasparov-product-result} determines the same class as the one in Proposition~\ref{prop-simplified-module} if $T$ is sufficiently large. The module can be decomposed as 
\[
[L^2(M,S^{TM}), F(D)]\oplus [H_{T,4}, F(A_{T,4})].
\]
The Kasparov product of \eqref{eq-k-theory-e} with the first summand in the above module is the Rosenberg index of $M$ and the product with the second summand is given by
\[
\left[H_{T,4}\otimes_{C(M)}C(M,E),F(A_{T,4})\otimes 1\right].
\]
Since $A_{T,4}$ is invertible for large $T$, one can choose $F$ to be 
\[
F=
\begin{cases}
	1 & x>0, \\
	-1& x\leq 0.
\end{cases}
\] 
Notice that the function $F$ by itself is not smooth, however, the functional calculus $F(A_{T,4})$ only depends on the restriction of $F$ to the spectrum of $A_{T,4}$ where the function $F$ is indeed smooth.
The above module is then degenerate.

\begin{proposition}\label{prop-kas-prod-equal-rosenberg}
	The Kasparov product of \eqref{eq-k-theory-e}, \eqref{eq-Y-U} and \eqref{eq-U-C} is the Rosenberg index of $M$.
\end{proposition}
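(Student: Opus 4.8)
The plan is to assemble, via associativity and additivity of the Kasparov product, the computations carried out in Sections~\ref{sec-kasparov-product}--\ref{sec-equivalent-kasparov}. First I would use associativity of the Kasparov product to rewrite the triple product \eqref{eq-three-product} as the Kasparov product of \eqref{eq-k-theory-e} with the $C_0(\mathcal{M})$-product of \eqref{eq-Y-U} and \eqref{eq-U-C}. By the proposition of Section~\ref{sec-kasparov-product} the latter product is represented by \eqref{eq-kasparov-product-result} for every $T>0$, and its class in $KK(C(M),\mathbb{C})$ is independent of $\beta,\varepsilon,T$.

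Next I would fix convenient parameters, say $\beta=\varepsilon=1$, and then choose $T$ large enough that all the conclusions of Sections~\ref{sec-bismut-lebeau} and \ref{sec-equivalent-kasparov} are in force. By the equivalence established at the close of Section~\ref{sec-equivalent-kasparov}, \eqref{eq-kasparov-product-result} then represents the same $KK(C(M),\mathbb{C})$-class as the module of Proposition~\ref{prop-simplified-module}, which is the orthogonal direct sum
\[
[L^2(M,S^{TM}),F(D)]\oplus[H_{T,4},F(A_{T,4})]
\]
appearing in \eqref{eq-class-of-DY}. Since the Kasparov product is additive over such direct sums, pairing with \eqref{eq-k-theory-e} produces the sum of the two contributions.

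It then remains to identify each contribution. The product of \eqref{eq-k-theory-e} with $[L^2(M,S^{TM}),F(D)]$ is, by the very definition of the Rosenberg index recalled in the introduction (cf.\ \eqref{eq-dirac-k-homology}), the class $[\alpha]\in K_0(C^\ast\pi_1(M))$. The product of \eqref{eq-k-theory-e} with $[H_{T,4},F(A_{T,4})]$ is $[H_{T,4}\otimes_{C(M)}C(M,E),F(A_{T,4})\otimes 1]$; because $A_{T,4}$ is invertible for large $T$ by Proposition~\ref{eq-estimate-of-a-t-4}, one may represent its $K$-homology class with $F$ equal to the sign function on the spectrum of $A_{T,4}$, so that $F(A_{T,4})\otimes 1$ becomes a grading-reversing involution commuting with the $C(M)$-action; the module is therefore degenerate and represents $0$. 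Adding the two contributions yields $[\alpha]$, which is the assertion.

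The only point requiring genuine care is the order in which the parameters are frozen: the equivalence with Proposition~\ref{prop-simplified-module} and the invertibility of $A_{T,4}$ both hold only once $T$ is large \emph{depending on} $\beta,\varepsilon$, whereas the class of \eqref{eq-kasparov-product-result} is parameter-free, so one must fix $\beta,\varepsilon$ first and select $T$ afterwards. Beyond this bookkeeping there is no further obstacle; all of the analysis has already been done in the preceding sections, and the proposition is a formal consequence of those results together with associativity and additivity of the Kasparov product.
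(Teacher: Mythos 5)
Your argument is correct and follows essentially the same route as the paper: represent the product of \eqref{eq-Y-U} and \eqref{eq-U-C} by \eqref{eq-kasparov-product-result}, pass (for fixed $\beta,\varepsilon$ and $T$ large) to the direct sum \eqref{eq-class-of-DY}, identify the first summand's pairing with \eqref{eq-k-theory-e} as the Rosenberg index, and discard the second summand as degenerate because $A_{T,4}$ is invertible. Your remark about fixing $\beta,\varepsilon$ before choosing $T$ is exactly the bookkeeping the paper implicitly relies on.
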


\section{Invertibility}\label{sec-invertible}
For $\beta,\varepsilon\in (0,1]$, we have the following Lichnerowicz formula
\[
\left(D_{S(\mathcal{F}_2^\perp),\beta,\varepsilon}+T\widehat{c}(V) \right)^2 = D_{S(\mathcal{F}_2^\perp),\beta,\varepsilon}^{2}+T^2\ell^2+ T \sum_{i=1}^n c_{\beta,\varepsilon}(f_i)\widehat{c}(\nabla_{f_i}^{\mathcal{F}_2^\perp} V).
\]
Since $\nabla_{f_i}^{\mathcal{F}_2^\perp} V = f_i(\ell)v+\ell \nabla_{f_i}^{\mathcal{F}_2^\perp} v$, according to Proposition~\ref{prop-key-estimates}, the norm of $\sum_{i=1}^n c_{\beta,\varepsilon}(f_i)\widehat{c}(\nabla_{f_i}^{\mathcal{F}_2^\perp} V)$ is less than or equal to $\theta(\ell+\beta^{-1})$ for some constant $\theta> 0$. As pointed out in Remark~\ref{remk-constant-theta}, this constant $\theta$ is independent of $\beta, \varepsilon$ and $T$. Let $T=1/a\beta$, choose $a$ and $R_0$ such that 
\begin{equation}\label{eq-assumption-on-constant}
R_0>8a\theta\geq 1 \quad \text{and} \quad k^{F}/4>\theta/a,
\end{equation}
where $k^F$ is the leafwise scalar curvature.

Then for all $\beta,\varepsilon\in (0,1]$  we have
\[
T^2\ell^2+ T \sum_{i=1}^n c_{\beta,\varepsilon}(f_i)\widehat{c}(\nabla_{f_i}^{\mathcal{F}_2^\perp} V) \geq \frac{\ell^2}{a^2\beta^2} - \frac{\theta\ell}{a\beta}-\frac{\theta}{a\beta^2}.
\]
According to the assumption~\eqref{eq-assumption-on-constant}, the above quantity is positive outside $\mathcal{M}_{R_0}$ and hence the kernel of $D_{S(\mathcal{F}_2^\perp),\beta,\varepsilon}+T\widehat{c}(V)$ is supported within $\mathcal{M}_{R_0}$.

 On the other hand, in $\mathcal{M}_{R_0}$ 
 \[
 D_{S(\mathcal{F}_2^\perp),\beta,\varepsilon}^2 = -\Delta^{\beta,\varepsilon}+\frac{k^\mathcal{F}}{4\beta^2}+O_{R_0}(\frac{1}{\beta}+\frac{\varepsilon^2}{\beta^2}).
 \]
 The square of $D_{S(\mathcal{F}_2^\perp),\beta,\varepsilon}+T\widehat{c}(V)$ is larger than 
 \[
 \frac{k^F}{4\beta^2}-\frac{\theta}{a\beta^2}-\frac{\theta\ell}{a\beta}+O_{R_0}(\frac{1}{\beta}+\frac{\varepsilon^2}{\beta^2}),
 \]
 where the first two terms together, according to the assumption~\eqref{eq-assumption-on-constant} is a positive number over $\beta^2$. By making $\beta$ and $\varepsilon$ sufficiently small, the operator $D_{S(\mathcal{F}_2^\perp),\beta,\varepsilon}+T\widehat{c}(V)$ is positive on $\mathcal{M}_{R_0}$.
 
 As a consequence, the Kasparov product of \eqref{eq-k-theory-e} with \eqref{eq-kasparov-product-result}, which is given by 
 \[
 \left[L^2(\mathcal{M},S^{T\mathcal{M}}\otimes \wedge \mathcal{F}_2^\perp)\otimes_{C(M)} C(M,E), F(A_T)\otimes 1\right],
 \]
 is the zero element in $KK(\mathbb{C},C^\ast \pi_1)$. Together with Proposition~\ref{prop-kas-prod-equal-rosenberg} we have proved Theorem~\ref{thm-main}.

 \section*{Acknowledgment}
 The authors would like to thank Professor Weiping Zhang for suggesting this problem to us and for many enlightening discussions.
 
 	G. Su was partially supported by NSFC 12425106, NSFC 12271266, NSFC 11931007, Nankai Zhide Foundation and the Fundamental Research Funds for the Central Universities. Z. Yi was partially supported by  National Key R\&D Program of China 2022YFA100700.

\bibliography{Refs} 
\bibliographystyle{alpha}

\noindent{\small Chern Institute of Mathematics and LPMC, Nankai University, Tianjin 300071, P. R. China.}
\smallskip

\noindent{\small Email: guangxiangsu@nankai.edu.cn}
\smallskip

\noindent{\small School of Mathematical Sciences, Tongji University, Shanghai 200092, P. R. China.}

\smallskip

\noindent{\small Email: zelin@tongji.edu.cn}



\end{document}